\documentclass[11pt]{article}
\usepackage{amssymb}
\usepackage{mathtools}		
\usepackage{mathabx}		
\usepackage{mathrsfs}		

\usepackage[normalem]{ulem} 

\usepackage{amsthm}
\usepackage{thmtools}

\usepackage{enumitem}		
\usepackage[backref=page,colorlinks=true]{hyperref}	
\usepackage[capitalize]{cleveref} 

\usepackage[font=small]{caption}

\usepackage{tikz}

\usepackage{xcolor}
\hypersetup{
 colorlinks,
 linkcolor={red!50!black},
 citecolor={blue!50!black},
 urlcolor={blue!80!black}
}

\renewcommand*{\backref}[1]{}
\renewcommand*{\backrefalt}[4]{\quad \tiny 
 \ifcase #1 (\textbf{NOT CITED.})%
 \or (Cited on page~#2.)%
 \else (Cited on pages~#2.)%
 \fi}

\makeatletter
 \def\MRbibitem{\@ifnextchar[\my@lbibitem\my@bibitem}
 
\def\mybiblabel#1#2{\@biblabel{#2}}
 
\def\myhyperanchor#1{\Hy@raisedlink{\hyper@anchorstart{cite.#1}\hyper@anchorend}}

\def\my@lbibitem[#1]#2#3#4\par{%
 \item[\mybiblabel{#2}{#1}\myhyperanchor{#3}\hfill]#4%
 \@ifundefined{ifbackrefparscan}{}{\BR@backref{#3}}%
 \if@filesw{\let\protect\noexpand\immediate
 \write\@auxout{\string\bibcite{#3}{#1}}}\fi\ignorespaces%
}
 
\def\my@bibitem#1#2#3\par{%
 \refstepcounter\@listctr 
 \item[\mybiblabel{#1}{\the\value\@listctr}\myhyperanchor{#2}\hfill]#3%
 \@ifundefined{ifbackrefparscan}{}{\BR@backref{#2}}%
 \if@filesw\immediate\write\@auxout
 {\string\bibcite{#2}{\the\value\@listctr}}\fi\ignorespaces%
}

\makeatother

 \declaretheoremstyle[
headfont=\footnotesize\bf,
bodyfont=\footnotesize\sf
]{myremark}

\declaretheorem[Refname={Theorem,Theorems}, name=Theorem, numberwithin=section]{otherthm}
\declaretheorem[Refname={Lemma,Lemmas}, sibling=otherthm]{lemma}
\declaretheorem[Refname={Corollary,Corollaries}, sibling=otherthm]{corollary}
\declaretheorem[Refname={Proposition,Propositions}, name=Proposition, sibling=otherthm]{proposition}
\declaretheorem[Refname={Conjecture,Conjectures}, name=Conjecture, sibling=otherthm]{conjecture}
\declaretheorem[Refname={Definition,Definitions}, sibling=otherthm, style=definition]{definition}

\declaretheorem[Refname={Question,Questions}, sibling=otherthm, style=plain]{question}
\declaretheorem[Refname={Problem,Problems}, sibling=otherthm, style=plain]{problem}
\declaretheorem[Refname={Remark,Remarks}, sibling=otherthm, style=remark]{remark}

\declaretheorem[
  name=Fact,
  Refname={Fact,Facts},
  sibling=otherthm
]{fact}

\hypersetup{bookmarksdepth = 3} 
  
\numberwithin{equation}{section} 
\setlist[enumerate,1]{label={\upshape(\alph*)},ref=\alph*}
\setlist[enumerate,2]{label={\upshape(\arabic*)},ref=\arabic*}

 \newcommand{\tribar}[1]{\mathopen{| {\kern -1.5pt} | {\kern -1.5pt} |} {#1}
\mathclose{| {\kern -1.5pt} | {\kern -1.5pt} |}}

\newcommand{\qand}{\quad \text{and} \quad}

\newcommand{\se}{\mathsf e}

\newcommand{\Scl}{\mathrm{Scl} }
\newcommand{\Ord}{\mathrm{Ord}\, }
\newcommand{\Dim}{\mathrm{Dim}\, }

\newcommand{\R}{\mathbb{R}}

\newcommand{\N}{\mathbb{N}}
\newcommand{\cA}{\mathcal{A}}
\newcommand{\cD}{\mathcal{D}}\newcommand{\cE}{\mathcal{E}}

\newcommand{\cM}{\mathcal{M}}\newcommand{\cN}{\mathcal{N}}

\newcommand{\cP}{\mathcal{P}}\newcommand{\cQ}{\mathcal{Q}}
\newcommand{\cU}{\mathcal{U}}

\newcommand{\arxiv}[1]{Preprint \href{http://arxiv.org/abs/#1}{arXiv:{#1}}}
\newcommand{\doi}[1]{%
  \href{https://doi.org/#1}{\nolinkurl{DOI:#1}}%
}

\renewcommand{\phi}{\varphi}
\renewcommand{\setminus}{\smallsetminus}

\newcommand{\ord}{\mathrm{Ord}}
\newcommand{\Leb}{\mathrm{Leb}}
\newcommand{\Diff}{\mathit{Diff}}
\newcommand{\Symp}{\mathit{Symp}}

\DeclareMathOperator{\supp}{supp}

\newcommand{\Eme}{\mathscr{E}} 
\newcommand{\bfemph}[1]{\textbf{\emph{#1}}}

\begin{document}
\title{Emergence in dynamical systems}
\date{\today}
\author{Pierre Berger\thanks{This work was supported by the European Research Council (ERC) under the European Union's Horizon 2020 research and innovation programme (grant agreement No.~818737, Emergence).}\\[0.3em]
{\small IMJ-PRG, CNRS, Sorbonne Universit\'e, Universit\'e Paris Cit\'e}}
\maketitle
\begin{abstract}
In \cite{Be17}, we introduced the notion of emergence to quantify the statistical complexity of a dynamical system. Roughly speaking, emergence measures the number of probability measures required to describe, up to a given precision $\epsilon$, the statistical behavior of most orbits. A system is said to have high emergence when this number
grows super-polynomially as $\epsilon\to0$.

We survey recent developments in a program aimed at understanding the prevalence of high emergence in differentiable dynamics. We review several notions of emergence and their connections with quantization, ergodic decompositions, and entropy, and discuss examples exhibiting high or maximal emergence in conservative, symplectic, analytic, and dissipative dynamics, as well as in constrained families such as unimodal, Hénon, and rational maps.

 We also present new results concerning variants of metric and topological emergence, convexity properties, local emergence, and variational principles, together with a collection of open problems on the typicality of high emergence.
\end{abstract}

\belowpdfbookmark{\contentsname}{toc} 
\tableofcontents

\section*{Introduction}\label{setting}

Emergence is a buzzword in natural and social sciences. There are many studies in mathematics which interpret the notion of emergence differently. In \cite{Be17}, I proposed a definition of emergence that aims to describe the statistical complexity of differentiable dynamical systems. Basically, the idea is to quantify the number of probability measures needed to describe the statistical behavior of most (Leb.) orbits of the systems with precision $\epsilon$. 
When this number grows super-polynomially as $\epsilon\to0$,
the dynamical system is said to have \emph{high emergence}. The idea of the program was to focus on them.
Such systems are interesting because the number of statistics needed to describe the system grows so fast that it is practically impossible to describe the statistical behavior of the systems. I also proposed a program aiming to show the ``typicality" of dynamics with high emergence, in many senses and in many categories. We shall survey some progress in this program and introduce new results. 
 
In \cite{BB21}, with J. Bochi, we developed this concept by focusing on differentiable dynamics which preserve the Lebesgue measure, and more generally, a probability measure. We noticed that the emergence is actually the quantization number of the ergodic decomposition. We also introduced the notion of topological emergence, as the covering number of the set of ergodic measures. I will recall these concepts in \cref{section 1}. We will generalize in \cref{quantization2emergence_gen} the link between the quantization number and emergence to the non-conservative setting. We will introduce a variation of the latter notions: the second metric and topological emergences. We will show in \cref{Finite emergence} that if the second metric emergence of a system is finite, then there are finitely many measures whose basins contain a.e. point. We will finish \cref{section 1} by recalling the notion of local emergence as introduced in \cite{Be20}; it is at most the metric emergence by Helfter's theorem \cite{He25}.

In \cref{section 2}, we will present several examples of dynamics with high emergence, starting with the examples obtained with J. Bochi in \cite{BB21} in the category of conservative surface maps and their local genericity, the recent work with Turaev on the local Kolmogorov typicality of smooth symplectomorphisms with high emergence \cite{BT25}, and the recent works on analytic surface dynamics with maximal order of local emergence \cite{Be22,De25}. Then we will present three kinds of examples of dynamics with high emergence in the dissipative setting. The first kind is the dissipative counterpart of the symplectic examples (where KAM tori are replaced by attracting tori), the second is given by wandering stable components by the works \cite{KNS25,BB23}, and finally the examples are given by the construction \emph{à la} Hofbauer-Keller, see \cite{HK95,Ta22,BG25}. 

In \cref{section 3}, we will recall and develop the dictionary introduced in \cite{Be20} between the notion of emergence and entropy. Also, we will present several problems, some of which are new. 

In \cref{section 4}, we will study the convexity property of the metric and local emergence. Then we will show that the local emergence is an affine function of the measure. 
Finally, we will recall the variational principle proved in \cite{BB21} and give its counterpart for the second metric and topological emergence.

\medskip
 
\thanks I am grateful for many discussions with my colleagues which enabled me to write these notes, in particular F. Béguin, S. Biebler, J. Bochi, P.-A. Guihéneuf, M. Helfter, F. Ledrappier, Y. Nakano, A. Talebi and D. Turaev. 

\section{Definitions of Emergences}\label{section 1}
Let $(X,\mu)$ be a compact metric space $X$ endowed with a probability measure~$\mu$. Let $f$ be a measurable self-map of $X$ (not necessarily $\mu$-preserving).
The \bfemph{$n^{\mathrm{th}}$ empirical measure} of $f$ at $x\in X$ is:
\[\mathsf e^f_n(x)=\frac1n \sum_{0\le k<n} \delta_{f^k(x)}\; .\] 
If the limit exists, we denote it by $\mathsf e^f(x):= \lim_{n\to \infty} \mathsf e^f_n(x)$. 
If the limit exists $\mu$-a.e., we say that $(f,X,\mu)$ is \bfemph{empirical}. Then the limit $\mathsf e^f$ is called the \bfemph{empirical function} of $f$. By Birkhoff's ergodic theorem,
 any system which leaves the measure $\mu$ invariant ($f_*\mu = \mu$) is empirical.

We endow the space $\cM(X)$ of probability measures on $X$ with the Kantorovich-Wasserstein distance $\mathsf{d}$:
\[ \mathsf{d}(\mu_1, \mu_2)= \sup_{\phi\in \text{Lip}^1(X, \R)}\int_X \phi \, d(\mu_1-\mu_2)=\inf_{\hat \mu \in \tau(\mu_1, \mu_2) } \int_{X\times X} \mathsf{d}(x,y) \, d\hat \mu(x,y) \; ,\] 
where $\tau(\mu_1, \mu_2)$ is the set of measures in $ \cM(X\times X)$ which transport $\mu_1$ to $\mu_2$: $p_{i*} \hat \mu = \mu_i$, with $p_i:(x_1,x_2)\in X^2\mapsto x_i\in X$. This distance endows $\cM(X)$ with the weak $\star$ topology.

\subsection{Emergence of general systems}
\begin{definition}[\cite{Be17}] \label{def.metric_em}
The \bfemph{metric emergence} $\Eme_{\mu}(f)$ is the function that associates to $\epsilon>0$ the minimal number $\Eme_{\mu}(f)(\epsilon)=N$ of probability measures $\mu_1$, \dots, $\mu_N$ satisfying: 
\begin{equation}\label{emergence def}
\limsup_{n\to \infty} \int \min_{1\le i\le N} \mathsf{d}(\se^f_n(x), \mu_i) \, d\mu(x) < \epsilon \; .
\end{equation}\end{definition}
Let us introduce a variation of this definition:
\begin{definition} \label{def.metric_em2}
The \bfemph{$2^{nd}$-metric emergence} $ \Eme'_{\mu}(f)$ is the function that associates to $\epsilon>0$ the minimal number $\Eme'_{\mu}(f)(\epsilon)=N$ of probability measures $\mu_1$, \dots, $\mu_N$ satisfying: 
\begin{equation}\label{emergence def2}
 \int \varlimsup_{n\to \infty} \min_{1\le j\le N} \mathsf{d}(\se^f_n(x), \mu_j) \, d\mu(x) < \epsilon \; .
\end{equation}\end{definition}
We notice that:
\begin{equation} \Eme_\mu(f)\le \Eme'_\mu(f).\end{equation}
Kiriki, Nakano and Soma introduced \emph{pointwise emergence}
in \cite{KNS25}. At a point $x\in X$, it corresponds to metric
emergence with respect to the Dirac measure $\mu=\delta_x$.
The size and fractal properties of sets of points
with high or prescribed pointwise emergence have been
studied by Hou, Lin and Tian~\cite{HLT23},
Hou, Lin, Tian and Zhao~\cite{HLTZ24},
Ji, Chen and Lin~\cite{JCL22},
Nakano and Zelerowicz~\cite{NZ21},
Wu~\cite{Wu26}, and Zelerowicz~\cite{Ze25}.

\begin{proposition} If $(f,X,\mu)$ is empirical,
 then the metric emergences are equal:
\[\Eme_\mu(f)= \Eme'_\mu(f)\]
\end{proposition}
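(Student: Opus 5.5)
Since $\Eme_\mu(f)\le \Eme'_\mu(f)$ has already been noted, we only need the reverse inequality $\Eme'_\mu(f)(\epsilon)\le \Eme_\mu(f)(\epsilon)$ for every $\epsilon>0$. The plan is to show that, for an empirical system and a fixed finite family $\mu_1,\dots,\mu_N$, the two quantities in \eqref{emergence def} and \eqref{emergence def2} coincide: both are equal to $\int \min_j \mathsf d(\se^f(x),\mu_j)\,d\mu(x)$. Then any family realizing \eqref{emergence def} also realizes \eqref{emergence def2}, and conversely, which gives the equality of the two functions of $\epsilon$.

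First, set $g_n(x):=\min_{1\le j\le N}\mathsf d(\se^f_n(x),\mu_j)$. We check that $g_n$ is measurable. The map $x\mapsto \se^f_n(x)$ is measurable into $(\cM(X),\mathsf d)$, being a finite average of $\delta_{f^k(x)}$ with $f$ measurable and $x\mapsto\delta_x$ isometric. Moreover $\nu\mapsto \min_j\mathsf d(\nu,\mu_j)$ is $1$-Lipschitz, since a minimum of $1$-Lipschitz functions is $1$-Lipschitz.

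Second, since the system is empirical, there is a full-measure set on which $\se^f_n(x)\to\se^f(x)$ in the weak$\star$ topology, which is the topology of $\mathsf d$. By the Lipschitz property, $g_n(x)\to g(x):=\min_j\mathsf d(\se^f(x),\mu_j)$ on that set. In particular $\varlimsup_n g_n=g$ $\mu$-a.e., so the integrand in \eqref{emergence def2} integrates to $\int g\,d\mu$.

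Third, we handle the limit outside the integral. Since $X$ is compact, $\mathsf d\le \mathrm{diam}(X)$ on $\cM(X)$, so the $g_n$ are uniformly bounded by a constant, which is integrable for the probability $\mu$. Dominated convergence then gives $\lim_n\int g_n\,d\mu=\int g\,d\mu$, and hence $\limsup_n\int g_n\,d\mu=\int g\,d\mu=\int\varlimsup_n g_n\,d\mu$. The strict inequalities $<\epsilon$ in the two definitions are therefore equivalent for the same family, so the minimal $N$ agrees.

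The only delicate point is the measurability of $\se^f(x)$ and $\varlimsup_n g_n$. This should be harmless: $\varlimsup_n g_n$ is a limsup of measurable functions, and it agrees a.e. with $g$. No real obstacle is expected beyond justifying dominated convergence, which rests on the boundedness of the Wasserstein distance on a compact space.
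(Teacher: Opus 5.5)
Your proof is correct and follows the paper's argument. For each fixed finite family, empiricalness gives a.e. convergence of $\min_j \mathsf d(\se^f_n,\mu_j)$, and dominated convergence (using that these functions are bounded) then shows that both defining quantities equal $\int \min_j \mathsf d(\se^f,\mu_j)\,d\mu$; your measurability and Lipschitz remarks simply fill in details the paper leaves implicit.
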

\begin{proof} For any finite tuple of measures $(\mu_j)_j$, the sequence of functions $x\mapsto \min_{1\le j\le N} \mathsf{d}(\se^f_n(x), \mu_j)$ is bounded. 
Thus, by the dominated convergence theorem, we have:
\[ \int \min_{j} \mathsf{d}(\se^f , \mu_j) \, d\mu = \int \lim_n \min_{j} \mathsf{d}(\se^f_n , \mu_j) \, d\mu =
 \lim_n \int \min_{j} \mathsf{d}(\se^f_n , \mu_j) \, d\mu \; .\]
This implies the desired equality. 
\end{proof} 
A motivation for introducing the $2^{nd}$ metric emergence is the following:
\begin{otherthm}\label{Finite emergence} 
If the $2^{nd}$-metric emergence is finite:
\[N:=\limsup_{\epsilon\to 0} \cE'_\mu<\infty,\]
 then there are exactly $N$ measures whose basins cover $\mu$-a.e. point in $X$.
\end{otherthm}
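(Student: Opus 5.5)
The plan is to extract a limiting $N$-tuple of measures by compactness and then show that these measures' basins have full measure. First, $\epsilon\mapsto \Eme'_\mu(f)(\epsilon)$ is integer valued and non-increasing in $\epsilon$. So the hypothesis $\limsup_{\epsilon\to0}\Eme'_\mu(f)(\epsilon)=N<\infty$ means that $\Eme'_\mu(f)(\epsilon)=N$ for every $\epsilon$ below some $\epsilon_0$. For each such $\epsilon$ I choose measures $\mu^\epsilon_1,\dots,\mu^\epsilon_N$ satisfying \eqref{emergence def2}. Since $\cM(X)$ is compact for $\mathsf d$, I can take a sequence $\epsilon_k\to0$ along which $(\mu^{\epsilon_k}_1,\dots,\mu^{\epsilon_k}_N)\to(\nu_1,\dots,\nu_N)$ in $\cM(X)^N$.

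Next I transfer the estimate to the limit tuple. For every $x$ and $n$, the triangle inequality gives
\[\min_j \mathsf d(\se^f_n(x),\nu_j)\le \min_j \mathsf d(\se^f_n(x),\mu^{\epsilon_k}_j)+\max_j \mathsf d(\mu^{\epsilon_k}_j,\nu_j).\]
Taking $\varlimsup_n$ and integrating yields
\[\int \varlimsup_n \min_j \mathsf d(\se^f_n,\nu_j)\,d\mu<\epsilon_k+\max_j\mathsf d(\mu^{\epsilon_k}_j,\nu_j)\to0.\]
Hence $\varlimsup_n\min_j \mathsf d(\se^f_n(x),\nu_j)=0$ for $\mu$-a.e.\ $x$. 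In other words, for such $x$ the set $A(x)$ of accumulation points of $(\se^f_n(x))_n$ is contained in the finite set $\{\nu_1,\dots,\nu_N\}$.

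The key step, and the one I expect to need the most care, is to upgrade "accumulates only on the $\nu_j$" to "converges to one $\nu_j$". I will use that consecutive empirical measures are close: $\se^f_{n+1}(x)-\se^f_n(x)=\frac1{n+1}(\delta_{f^n(x)}-\se^f_n(x))$, so $\mathsf d(\se^f_{n+1}(x),\se^f_n(x))\le \mathrm{diam}(X)/(n+1)\to0$. A standard argument then shows that the limit set of a sequence with steps tending to $0$ in a compact metric space is connected. Suppose $A(x)$ were split into two disjoint nonempty closed pieces at positive distance. The sequence would visit neighborhoods of both pieces infinitely often, and with vanishing steps it would infinitely often lie in the region between them. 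Compactness would then produce an accumulation point outside $A(x)$, a contradiction. A connected subset of a finite set is a single point, so $\se^f_n(x)\to\nu_j$ for some $j$. Thus the basins of $\nu_1,\dots,\nu_N$ cover $\mu$-a.e.\ point.

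Finally, I show that the number $N$ is exact. Suppose the basins of some measures $\nu'_1,\dots,\nu'_M$ cover $\mu$-a.e.\ point. Then the integrand in \eqref{emergence def2} vanishes a.e. for this tuple, so $\Eme'_\mu(f)(\epsilon)\le M$ for all $\epsilon>0$, and hence $M\ge N$. In particular the $\nu_j$ are pairwise distinct, and each basin has positive $\mu$-measure, since otherwise deleting a repeated or null-basin measure would give fewer than $N$ measures with the same covering property. This gives exactly $N$ measures whose basins cover $\mu$-a.e.\ point of $X$.
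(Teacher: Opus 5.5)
Your proposal is correct and follows the paper's route: extract $N$ limit measures by compactness of $\cM(X)$, deduce that the $\varlimsup$ of the distance to $\{\nu_1,\dots,\nu_N\}$ vanishes $\mu$-a.e., and conclude via connectedness of the accumulation set of the empirical measures. You also supply details the paper leaves implicit: the triangle-inequality passage to the limit tuple, the vanishing-step proof of connectedness, and the minimality argument showing that the $\nu_j$ are distinct with non-null basins, which is what justifies the word ``exactly''.
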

\begin{proof} By compactness of $\cM(X)$, there exists $(\mu_j)_{1\le j\le N}$ such that for every $\epsilon>0$, it holds: 
\[ \int \varlimsup_{n\to \infty} \min_{1\le j\le N} \mathsf{d}(\se^f_n(x), \mu_j) \, d\mu(x) < \epsilon \; .\]
Hence $ \varlimsup_{n\to \infty} \mathsf{d}(\se^f_n(x),\{ \mu_j: 1\le j\le N\})= 0$ a.e. This means that the set of accumulation points of $(\se^f_n(x))_n$ is included in 
$\{ \mu_j: 1\le j\le N\}$. As the set of accumulation points of $(\se^f_n(x))_n$ is connected, we obtain that for $\mu$-a.e. $x$, the sequence $(\se^f_n(x))_n$ converges to a certain $\mu_j$, i.e. $x$ is in the basin of $\mu_j$. \end{proof}
\begin{remark}
If $f$ is continuous, the physical measures $(\mu_j)_j$
must be invariant, but need not be ergodic. A counterexample can be constructed with $X=[0,1]$, $\mu=\Leb$ and 
$f(x)= 4\lambda x(1-x)$ with $\lambda$ a parameter \emph{à la} Hofbauer-Keller \cite{HK90}. 
\end{remark}
\begin{definition}[\cite{BB21}] \label{def.te}
The \bfemph{topological emergence} $\Eme_\mathrm{top}(f) $ of $f$ is the function which associates to $\epsilon>0$ the minimal number of $\epsilon$-balls of $\cM(X)$ whose union covers the subset $\cM_f^\mathrm{erg}$ of ergodic measures.
\end{definition}
\begin{definition} \label{def.te2}
The \bfemph{$2^{nd}$ topological emergence} $ \Eme'_\mathrm{top}(f) $ of $f$ is the function which associates to $\epsilon>0$ the minimal number of $\epsilon$-balls of $\cM(X)$ whose union covers the subset $\bigcup_{x\in X} \mathrm{Acc}(\mathsf e^f_n(x))_n$ of all the accumulation points of the empirical measures.
\end{definition}
The following is immediate:
\begin{proposition} $\Eme_{\mathrm{top}}(f)\le \Eme'_{\mathrm{top}}(f)$. 
\end{proposition}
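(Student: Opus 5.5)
The plan is to prove the inclusion
\[
\cM_f^{\mathrm{erg}} \subset \bigcup_{x\in X} \mathrm{Acc}\bigl(\mathsf e^f_n(x)\bigr)_n \; .
\]
The inequality then follows at once from \cref{def.te} and \cref{def.te2}. Indeed, any family of $\epsilon$-balls in $\cM(X)$ that covers the right-hand set also covers $\cM_f^{\mathrm{erg}}$. So the minimal number of balls covering $\cM_f^{\mathrm{erg}}$ is at most the minimal number covering the set of accumulation points, for every $\epsilon>0$. If $\cM_f^{\mathrm{erg}}$ is empty, the inequality holds trivially.

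For the inclusion, fix an ergodic $f$-invariant probability $\nu$. Since $X$ is a compact metric space, $C(X)$ is separable, so I choose a countable dense family $(\phi_k)_{k\ge1}$ in $C(X)$. Each $\phi_k$ is bounded and measurable, and $\nu$ is $f$-invariant and ergodic. Birkhoff's ergodic theorem, which needs only measurability of $f$ and invariance of $\nu$, then gives a full $\nu$-measure set $G_k$ on which
\[
\frac1n\sum_{0\le j<n}\phi_k(f^j x)\to \int\phi_k\,d\nu \; .
\]
Let $G=\bigcap_k G_k$, which still has full $\nu$-measure and is in particular nonempty.

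For $x\in G$, the convergence $\int\phi\, d\mathsf e^f_n(x)\to\int\phi\, d\nu$ holds for every $\phi$ in the dense family. It extends to every $\phi\in C(X)$ by a $3\epsilon$-argument, since $\|\phi-\phi_k\|_\infty$ small controls both integrals uniformly in $n$. Hence $\mathsf e^f_n(x)\to\nu$ in the weak$\star$ topology, which is the topology induced by the Kantorovich–Wasserstein distance $\mathsf d$. Therefore $\nu$ is the limit, and in particular an accumulation point, of $(\mathsf e^f_n(x))_n$. This proves the inclusion.

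The only point that needs care is the use of Birkhoff's theorem simultaneously for all continuous observables. That is exactly what the separability reduction handles. There is no real obstacle beyond this, consistent with the paper calling the statement immediate.
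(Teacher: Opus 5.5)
Your proof is correct and is essentially the argument the paper leaves implicit when it calls the statement ``immediate''. Birkhoff's theorem makes each ergodic $\nu$ the limit of the empirical measures of a $\nu$-generic point, so $\cM_f^{\mathrm{erg}}\subset\bigcup_{x\in X}\mathrm{Acc}(\mathsf e^f_n(x))_n$, and covering numbers by $\epsilon$-balls of $\cM(X)$ are monotone under inclusion; your separability reduction, and your remark that only measurability of $f$ is needed, supply exactly the details the paper omits.
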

We proved using Birkhoff's ergodic theorem:
\begin{proposition}[\cite{BB21}]\label{comparision metric top}
 If $f_*\mu =\mu$ then the metric emergence is at most the topological emergence:
\[\Eme_{\mu}(f)\le \Eme_{\mathrm{top}}(f)\; .\]
\end{proposition}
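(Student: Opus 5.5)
Fix $\epsilon>0$ and let $N=\Eme_{\mathrm{top}}(f)(\epsilon)$, realised by measures $\nu_1,\dots,\nu_N\in\cM(X)$ whose $\epsilon$-balls cover $\cM_f^{\mathrm{erg}}$. The plan is to use $\nu_1,\dots,\nu_N$ themselves as candidates in \cref{def.metric_em}, and to show that the limsup in \eqref{emergence def} is at most $\epsilon$ (up to a harmless adjustment of $\epsilon$, handled below).

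\textbf{Step 1 (limit of empirical measures).} Since $f_*\mu=\mu$, Birkhoff's ergodic theorem applied to a countable dense family of continuous functions on the compact space $X$ shows that for $\mu$-a.e.\ $x$ the limit $\se^f(x)=\lim_n\se^f_n(x)$ exists. By the ergodic decomposition theorem, $\se^f(x)$ is moreover an ergodic $f$-invariant measure for $\mu$-a.e.\ $x$: it is the ergodic component of $\mu$ containing $x$. This step requires measurability of $f$ together with the standard ergodic decomposition on a compact metric space.

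\textbf{Step 2 (dominated convergence).} The functions $x\mapsto\min_i\mathsf d(\se^f_n(x),\nu_i)$ are bounded by $\mathrm{diam}\,\cM(X)<\infty$. The map $\min_i\mathsf d(\cdot,\nu_i)$ is continuous on $\cM(X)$, so these functions converge a.e.\ to $\min_i\mathsf d(\se^f(x),\nu_i)$. Dominated convergence, exactly as in the proof of the preceding proposition on empirical systems, then gives
\[\lim_{n\to\infty}\int\min_i\mathsf d(\se^f_n(x),\nu_i)\,d\mu(x)=\int\min_i\mathsf d(\se^f(x),\nu_i)\,d\mu(x).\]

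\textbf{Step 3 (pointwise bound and strict inequality).} Since $\se^f(x)\in\cM_f^{\mathrm{erg}}$ a.e., it lies in some $\epsilon$-ball, so the integrand is $<\epsilon$ a.e. The integral of an a.e.\ strictly positive function over a probability space is strictly positive, hence the integral is strictly less than $\epsilon$. Thus \eqref{emergence def} holds with these $N$ measures, giving $\Eme_\mu(f)(\epsilon)\le N$.

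\textbf{Main obstacle.} The only delicate point is the convention for the balls. If the $\epsilon$-balls in \cref{def.te} are closed, one only obtains an integrand $\le\epsilon$, and the strict inequality in \eqref{emergence def} may fail. In that case the plan is to note that the conclusion $\Eme_\mu(f)(\epsilon)\le\Eme_{\mathrm{top}}(f)(\epsilon')$ holds for every $\epsilon'<\epsilon$. Since both functions are nonincreasing in $\epsilon$, this yields the inequality at the level of the emergence functions, which is what the comparison is about.
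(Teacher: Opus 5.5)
Your proof is correct and is the intended argument (the paper only cites \cite{BB21}, ``using Birkhoff's ergodic theorem''). Birkhoff plus the ergodic decomposition make $\se^f(x)$ an ergodic measure for $\mu$-a.e.\ $x$, so the centres of a minimal $\epsilon$-cover of $\cM_f^{\mathrm{erg}}$ give an integrand that is a.e.\ $<\epsilon$; dominated convergence and strict positivity of the integral of $\epsilon-\min_i\mathsf d(\se^f,\nu_i)$ then yield \eqref{emergence def}, exactly parallel to the paper's proof of $\Eme'_\mu(f)\le\Eme'_{\mathrm{top}}(f)$. The paper's balls are open (that same proof says so explicitly), so your closed-ball fallback is not needed---which is just as well, since with closed balls the pointwise inequality can genuinely fail, and your fallback only gives $\Eme_\mu(f)(\epsilon)\le\Eme_{\mathrm{top}}(f)(\epsilon')$ for $\epsilon'<\epsilon$.
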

Similarly, we have:
\begin{proposition} For any $\mu$ (not necessarily invariant), the $2^{nd} $-metric emergence is at most the $2^{nd}$ topological emergence:
\[ \Eme'_{\mu}(f)\le \Eme'_{\mathrm{top}}(f)\; .\]
\end{proposition}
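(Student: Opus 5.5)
Fix $\epsilon>0$ and let $N=\Eme'_{\mathrm{top}}(f)(\epsilon)$. Choose measures $\mu_1,\dots,\mu_N\in\cM(X)$ whose $\epsilon$-balls cover the set $A:=\bigcup_{x\in X}\mathrm{Acc}(\mathsf e^f_n(x))_n$. We will show that these same measures satisfy \eqref{emergence def2}, possibly at a slightly larger scale, and then let the scale go to zero.

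The key pointwise claim is the following: for every $x\in X$,
\[\varlimsup_{n\to\infty}\min_{1\le j\le N}\mathsf d(\se^f_n(x),\mu_j)\le \sup_{\nu\in \mathrm{Acc}(\se^f_n(x))_n}\min_j \mathsf d(\nu,\mu_j).\]
To prove it, pick a subsequence $n_k$ realizing the $\limsup$. Since $\cM(X)$ is compact for the Kantorovich--Wasserstein distance, we may pass to a further subsequence along which $\se^f_{n_k}(x)\to\nu$, and then $\nu\in\mathrm{Acc}(\se^f_n(x))_n\subset A$. The map $\nu\mapsto\min_j\mathsf d(\nu,\mu_j)$ is $1$-Lipschitz, hence continuous, so the $\limsup$ equals $\min_j\mathsf d(\nu,\mu_j)$. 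Because $\nu$ lies in some $\epsilon$-ball around a $\mu_j$, this quantity is $<\epsilon$. Note that no invariance of $\mu$ and no measurability of $f$ beyond the standing assumptions is used.

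The main obstacle is the strict inequality in \eqref{emergence def2}. The pointwise bound gives $g(x):=\varlimsup_n\min_j\mathsf d(\se^f_n(x),\mu_j)<\epsilon$ for every $x$, but the integral of $g$ is only $\le\epsilon$ unless $g$ is uniformly below $\epsilon$.

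The plan to fix this is to use compactness once more. Consider the closure $\bar A$, which is compact. The balls are open, but their union might not cover $\bar A$, so we cannot rely on the open balls directly. Instead, we use the monotonicity of the covering numbers in $\epsilon$. Set $N'=\Eme'_{\mathrm{top}}(f)(\epsilon')$ for some $\epsilon'<\epsilon$. The pointwise claim then gives $g\le\epsilon'<\epsilon$ everywhere, hence $\int g\,d\mu\le\epsilon'<\epsilon$. This yields
\[\Eme'_\mu(f)(\epsilon)\le\Eme'_{\mathrm{top}}(f)(\epsilon')\quad\text{for all }\epsilon'<\epsilon.\]
To get the exact inequality at $\epsilon$, we instead exploit the strict pointwise bound $g<\epsilon$ directly. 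Since the sets $\{g<\epsilon-1/k\}$ increase to all of $X$, we have $\int g\,d\mu<\epsilon$: indeed, if $\int g\,d\mu=\epsilon$ while $g<\epsilon$ everywhere, then $\int(\epsilon-g)\,d\mu=0$ with $\epsilon-g>0$ everywhere, which is impossible for a probability measure.

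This second argument handles the strict inequality directly and gives $\Eme'_\mu(f)(\epsilon)\le N$. It requires $g$ to be measurable, which we will note holds since it is a $\limsup$ of measurable functions, as is implicit in Definition \ref{def.metric_em2}.
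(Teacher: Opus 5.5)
Your proof is correct and follows the paper's argument. You cover the accumulation set by the $N$ open $\epsilon$-balls, use compactness of $\cM(X)$ to see that $\varlimsup_n \min_j \mathsf d(\se^f_n(x),\mu_j)$ is attained at an accumulation point of $(\se^f_n(x))_n$ and is therefore $<\epsilon$ for every $x$, and then integrate. Your closing observation, that a measurable $g<\epsilon$ everywhere forces $\int g\,d\mu<\epsilon$, makes explicit a step the paper passes over, so the intermediate detour through $\epsilon'<\epsilon$ and $\bar A$ is unnecessary and can be dropped.
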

\begin{proof} 
Let $\epsilon>0$ and $N:=\Eme'_{\mathrm{top}}(f)(\epsilon)$.
Choose $\mu_1,\ldots,\mu_N\in\cM(X)$ such that the open balls
$B(\mu_i,\epsilon)$ cover
$\bigcup_{x\in X}\mathrm{Acc}(\mathsf e_n^f(x))_n$.
 For every $x\in X$, the set
$K_x:=\mathrm{Acc}(\mathsf e_n^f(x))_n$ is compact.
Since it is covered by the open balls $B(\mu_i,\epsilon)$,
we have
\[
\limsup_{n\to\infty}
\mathsf d(\se_n^f(x),\{\mu_i:1\le i\le N\})
=
\max_{\nu\in K_x}
\mathsf d(\nu,\{\mu_i:1\le i\le N\})
<\epsilon.
\]
%
%
 Thus:
\[ \int \varlimsup_{n\to \infty} \min_{1\le i\le N} \mathsf{d}(\se^f_n(x), \mu_i) \, d\mu(x) < \epsilon \; .\]
From which we deduce that $ \Eme'_{\mu}(f)(\epsilon)\le N= \Eme'_{\mathrm{top}}(f)$.\end{proof} 

\begin{remark}In the example of Bowen eyes, with $X$ the closure of the eye and $\mu$ the Lebesgue measure, we have $ \Eme'_{\mu}(f)> \Eme_{\mathrm{top}}(f)$, see \cite{Ga92}.
 \end{remark}
 To summarise, we have:
 \[ \boxed{
 \Eme_{\mu}(f) \le \Eme'_{\mu}(f) \le \Eme'_{top}(f) \ge \Eme_{top}(f) \; .}\]
 \[ \boxed{f_*\mu = \mu\quad \Rightarrow \quad 
 \Eme_{\mu}(f) = \Eme'_{\mu}(f) \le 
 \Eme_{top}(f) \le \Eme'_{top}(f) \; .}\]

\subsection{Metric emergence, ergodic decomposition and quantization number} 
When $(f,X,\mu)$ is empirical, the pushforward
$\mathsf e^f_*\mu\in\cM(\cM(X))$ is the distribution
of its empirical limits. When $\mu$ is $f$-invariant,
this distribution is the \emph{ergodic decomposition of $\mu$}. It is a measure on the set $\cM(X) $ of the measures on $X$. 
In this case, in \cite{BB21}, we linked the notion of emergence to a geometrical property of the ergodic decomposition. 
To this end, we recall that given a compact metric space $Y$, the \bfemph{quantization number} of a probability measure $\mu\in \cM(Y)$ is the minimal number $ N$ of points $\{x_i: 1\le i\le N\}$ such that $\mu$ is at distance less than $\epsilon$ from the convex hull of $\{\delta_{x_i}: 1\le i\le N\}$, where $\cM(Y)$ is endowed with the Kantorovich-Wasserstein distance induced by the metric of $Y$. By taking $Y=\cM(X)$, the quantization number can be evaluated for measures in $\cM(\cM(X))$, such as the ergodic decomposition. Then we obtained the following characterization:
 \begin{otherthm}[\cite{BB21}]\label{quantization2emergence}
If $(f,X,\mu)$ is empirical, then the metric emergence is the quantization number of $\mathsf e^f_*\mu$.
\end{otherthm}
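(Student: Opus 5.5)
The plan is to reduce both quantities to the same optimization problem on $\cM(X)$. Write $\nu:=\mathsf e^f_*\mu\in\cM(\cM(X))$ for the distribution of empirical limits. For a tuple $(\mu_1,\dots,\mu_N)$ of points of $Y=\cM(X)$, I will identify the Kantorovich--Wasserstein distance from $\nu$ to the convex hull of $\{\delta_{\mu_i}\}$ with the quantity
\[ \int_{\cM(X)} \min_{1\le i\le N}\mathsf d(m,\mu_i)\, d\nu(m)= \int_X \min_{1\le i\le N}\mathsf d(\se^f(x),\mu_i)\, d\mu(x)\; . \]
Once this is done, the integrand in \eqref{emergence def} can be treated exactly as in the proof that $\Eme_\mu(f)=\Eme'_\mu(f)$. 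Since $(f,X,\mu)$ is empirical, $\min_i \mathsf d(\se^f_n(x),\mu_i)\to \min_i\mathsf d(\se^f(x),\mu_i)$ for $\mu$-a.e. $x$. These functions are bounded by $\mathrm{diam}\,\cM(X)<\infty$, so dominated convergence shows that the $\limsup$ in \eqref{emergence def} is in fact a limit, and it equals the right-hand side above. The second equality is just the change of variables under the pushforward $\mathsf e^f$, which requires $\mathsf e^f$ to be measurable. That holds because it is an a.e.\ limit of the measurable maps $\se^f_n$.

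The key step is therefore the identity
\[ \inf_{p\in\Delta_N}\mathsf d\Big(\nu,\sum_i p_i\delta_{\mu_i}\Big)=\int \min_i \mathsf d(m,\mu_i)\,d\nu(m)\; , \]
where $\Delta_N$ is the simplex of probability vectors. I would prove the two inequalities separately.

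\emph{Lower bound.} Take any $p\in\Delta_N$ and any coupling $\hat\nu\in\tau(\nu,\sum_i p_i\delta_{\mu_i})$. The second marginal is supported on $\{\mu_i\}$, so $\hat\nu$-a.e.\ pair $(m,m')$ has $m'=\mu_j$ for some $j$. Hence $\mathsf d(m,m')\ge \min_i\mathsf d(m,\mu_i)$ almost everywhere. Integrating gives the cost of $\hat\nu$ is at least $\int\min_i\mathsf d(m,\mu_i)\,d\nu$, and taking the infimum over couplings and over $p$ gives the bound.

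\emph{Upper bound.} Choose a measurable nearest-point map $\pi:\cM(X)\to\{\mu_1,\dots,\mu_N\}$ by breaking ties with the smallest index; it is measurable because it is defined by finitely many continuous comparisons. Set $p_i:=\nu(\pi^{-1}(\mu_i))$ and use the coupling $(\mathrm{id},\pi)_*\nu$. Its cost is exactly $\int\min_i\mathsf d(m,\mu_i)\,d\nu$, so the infimum is attained.

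With the identity in hand, a tuple $(\mu_i)$ satisfies \eqref{emergence def} with precision $\epsilon$ if and only if $\nu$ lies at distance less than $\epsilon$ from the convex hull of $\{\delta_{\mu_i}\}$. Minimizing $N$ on both sides gives $\Eme_\mu(f)(\epsilon)$ equal to the quantization number of $\nu$ at scale $\epsilon$, for every $\epsilon$.

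The part needing the most care is the strict inequality $<\epsilon$ and the fact that "at distance less than $\epsilon$ from the convex hull" is an infimum over $p$. This is harmless here because the upper-bound construction shows the infimum over $p$ is attained. So the set of admissible tuples is literally the same for both definitions, and no approximation argument is needed.
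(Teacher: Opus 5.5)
Your proof is correct and is essentially the paper's argument: as in the proof of \cref{quantization2emergence_gen} (and in \cite{BB21}), you push the integral forward to $\se^f_*\mu$. You then use the identity from the proof of \cite[Prop 3.2]{BB21} that $\int \min_i \mathsf d(m,\mu_i)\,d\nu(m)$ equals the Kantorovich--Wasserstein distance from $\nu$ to the convex hull of $\{\delta_{\mu_i}\}$, which you verify directly with the nearest-point coupling; dominated convergence, as in the proof that $\Eme_\mu(f)=\Eme'_\mu(f)$, turns the $\limsup$ into that integral. Your concern about the strict inequality is unnecessary, since the two quantities are equal whether or not the infimum over the simplex is attained.
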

 In other words, this theorem states that the metric emergence $\mathcal E^f_\mu(\epsilon)$ at scale $\epsilon$ is equal to the minimum number of measures $(\mu_i)_{1\le i\le N}$ such that with $H$ the convex hull of $\{\delta_{\mu_i}: 1\le i\le N\}$, it holds:
\[ \mathsf d( {\se}_*^f\mu , H)<\epsilon\; .\]
When $f$ is non-empirical, we can define similarly:
\begin{definition} For every $n\ge 1$, let $\cQ_\epsilon (\se^f_{k*} \mu )_{k\ge n} $ be the minimum number of measures $(\mu_i)_{1\le i\le N}$ such that with $H$ the convex hull of $\{\delta_{\mu_i}: 1\le i\le N\}$, it holds:
\[\sup_{k\ge n} \mathsf d( {\se}_{k*}^f\mu , H)<\epsilon\; .\]
 \end{definition} 
 We have the following immediate generalization of \cref{quantization2emergence}:
 \begin{otherthm} \label{quantization2emergence_gen}
It holds:
\[ \cE_\mu (f)(\epsilon) = \lim_{n\to \infty } \cQ_\epsilon ( \se^f_{k*} \mu )_{k\ge n}\; .\] 
\end{otherthm}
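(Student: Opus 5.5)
The plan is to reduce everything to a pointwise identity between two quantities attached to a fixed finite tuple of measures, and then pass to the limit in $n$. Fix $\epsilon>0$ and a tuple $\mu_1,\dots,\mu_N\in\cM(X)$, and let $H$ be the convex hull of $\{\delta_{\mu_i}\}$ in $\cM(\cM(X))$. The key identity I would establish first is that for every $k$,
\[
\mathsf d\bigl(\se^f_{k*}\mu, H\bigr)=\int \min_{1\le i\le N}\mathsf d(\se^f_k(x),\mu_i)\,d\mu(x).
\]
This is the same computation that underlies \cref{quantization2emergence}, with $\se^f$ replaced by $\se^f_k$. Its proof uses only that $\se^f_{k*}\mu$ is a probability measure on the compact metric space $\cM(X)$, so no empiricity is needed.

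For ``$\le$'', I partition $X$ measurably into sets $A_i$ on which $i$ realizes the minimum, breaking ties by least index. I then set $p_i=\mu(A_i)$ and use the coupling $\hat\nu=\sum_i (\se^f_k|_{A_i})_*\mu\otimes\delta_{\mu_i}$ between $\se^f_{k*}\mu$ and $\sum_ip_i\delta_{\mu_i}\in H$. This requires measurability of $x\mapsto\se^f_k(x)$, which follows from measurability of $f$. For ``$\ge$'', take any coupling $\hat\nu$ of $\se^f_{k*}\mu$ with some $\sum p_i\delta_{\mu_i}$. Its second marginal is supported on $\{\mu_i\}$, so
\[
\int \mathsf d(\nu,\nu')\,d\hat\nu\ge\int\min_i\mathsf d(\nu,\mu_i)\,d\se^f_{k*}\mu(\nu),
\]
and the right-hand side equals the integral over $X$ by the change of variables formula. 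Since $H$ is compact, this is a genuine infimum statement.

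With the identity in hand, write $F_k(\mu_1,\dots,\mu_N)$ for the common value. By definition, $\cQ_\epsilon(\se^f_{k*}\mu)_{k\ge n}$ is the least $N$ for which some tuple satisfies $\sup_{k\ge n}F_k<\epsilon$. This quantity is nonincreasing in $n$ and integer-valued, so the limit exists and equals $\min_n$. Now compare with \cref{def.metric_em}, which asks for a tuple with $\limsup_k F_k<\epsilon$. If $\sup_{k\ge n}F_k<\epsilon$ for some $n$, then $\limsup_k F_k<\epsilon$, which gives $\cE_\mu(f)(\epsilon)\le\lim_n\cQ_\epsilon$. Conversely, if $\limsup_kF_k<\epsilon$, then there is an $\eta<\epsilon$ and an $n$ with $F_k\le\eta$ for all $k\ge n$. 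Hence $\sup_{k\ge n}F_k\le\eta<\epsilon$, and for that $n$ we get $\cQ_\epsilon\le N$.

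The main obstacle I anticipate is the measurable selection in the ``$\le$'' half of the key identity: constructing the coupling requires the sets $A_i$ to be measurable. This holds because each map $x\mapsto\mathsf d(\se^f_k(x),\mu_i)$ is measurable. The remaining subtlety is the strict inequality in the $\sup$ versus $\limsup$ comparison, and that is handled by the margin $\eta$ chosen above.
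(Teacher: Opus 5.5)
Your proof is correct and follows the same route as the paper. For a fixed finite family you identify $\int\min_i \mathsf d(\se^f_k,\mu_i)\,d\mu$ with the distance from $\se^f_{k*}\mu$ to the convex hull $H$, which is the content of the proof of \cite[Prop 3.2]{BB21} that you reprove via an explicit coupling, and then you pass from $\sup_{k\ge n}$ to $\limsup_k$; your margin $\eta<\epsilon$ makes explicit the strict-inequality bookkeeping that the paper leaves implicit in ``taking the limit as $n\to\infty$''.
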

\begin{proof} Given a finite set $F= \{ \mu_j: j\}$ of measures, we have:
\[\int \min_j \mathsf d( \se^f_k, \mu_j )d\mu<\epsilon \Leftrightarrow \int \mathsf d( \nu , F )d\se^f_{k*} \mu<\epsilon .\]
 Then by the proof of \cite[Prop 3.2]{BB21}, this is equivalent to saying that $\se^f_{k*} \mu$ is included in the $\epsilon$-neighborhood of the convex hull of $\{\delta_{\mu_j} : \mu_j\in F\}$. Hence 
$\sup_{k\ge n} \int \min_j \mathsf d( \se^f_k, \mu_j )d\mu<\epsilon$ is equivalent to saying that 
$cl\, \{ \se^f_{k*} \mu: k\ge n\}$ is included in the $\epsilon$-neighborhood of the convex hull of $\{\delta_{\mu_j} : \mu_j\in F\}$. The desired equality is obtained by taking the limit as $n\to \infty$ and then taking $F$ of minimal cardinality. \end{proof}
\subsection{Dimension and order of emergence} 
When studying the dimensions of a metric space or of a probability measure, one studies the polynomial rate of growth of functions describing its geometry.
Hence, given a function $\phi:(0, \infty) \to (0, \infty) $, we define the upper and lower dimensions of $\phi$ as:
\[\overline{\Dim} \phi:= \varlimsup_{\epsilon\to 0} \frac{ |\log \phi(\epsilon)|}{|\log \epsilon|}
\qand
\underline{\Dim} \phi:= \varliminf_{\epsilon\to 0} \frac{ |\log \phi(\epsilon)|}{|\log \epsilon|}\; .
\]
Then the upper and lower box dimensions of $X$ are:
\[\overline{\Dim}_B X :=\overline{\Dim} N^B_X \qand \underline{\Dim}_B X :=\underline{\Dim} N_X^B \; ,\] 
where $N^B_X(\epsilon)$ is the $\epsilon$-covering number of $X$. Likewise, given a probability measure $\nu$ on $X$ and $x\in X$, we can define the local dimensions at $x$ as:
\[\overline{\Dim}_{loc}  \nu (x) :=\overline{\Dim} (\epsilon \mapsto \nu(B(x,\epsilon)))\qand 
 \underline{\Dim}_{loc}  \nu (x) :=\underline{\Dim} (\epsilon \mapsto \nu(B(x,\epsilon)))\; .\]
When studying the empirical functions, we deal with subsets or measures on the space of probability measures. This space is infinite dimensional. The \emph{order} is often more suitable. Given a function $\phi:(0, \infty) \to (0, \infty) $, we define the upper and lower orders of $\phi$ as:
\[\overline{\Ord} \phi:= \varlimsup_{\epsilon\to 0} \frac{\log |\log \phi(\epsilon)|}{|\log \epsilon|}
\qand
\underline{\Ord} \phi:= \varliminf_{\epsilon\to 0} \frac{\log |\log \phi(\epsilon)|}{|\log \epsilon|}\; .
\]
By convention, both orders are zero if $\phi(\epsilon)=1$
for all sufficiently small $\epsilon$.
Likewise, given a subspace $Y\subset \cM(X)$ and a probability measure $\nu $ on $\cM(X)$, we define:
\[\overline{\Ord}_B Y :=\overline{\Ord} N^B_Y \qand \underline{\Ord}_B Y :=\underline{\Ord} N_Y^B\; .\] 
\[\overline{\Ord}_{loc}  \nu :=\overline{\Ord} (\epsilon \mapsto \nu(B(\cdot ,\epsilon)))\qand 
 \underline{\Ord}_{loc}  \nu :=\underline{\Ord} (\epsilon \mapsto \nu(B(\cdot ,\epsilon)))\; .\]
 The following confirmed that the order is well suited to studying the space of measures when $X$ is finite dimensional. 
\begin{otherthm}[\cite{BB21} with R. Peyre]\label{Bound topo} It holds:
\[\underline \Dim_B X\le \underline \ord_B \cM(X) 
\le \overline \ord_B \cM(X) \le \overline \Dim_B X\; .\]
\end{otherthm}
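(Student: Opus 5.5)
We prove the two outer inequalities separately; the middle one is trivial. Write $N_X(\epsilon)$ for the $\epsilon$-covering number of $X$ and $N_{\cM}(\epsilon)$ for that of $\cM(X)$ with the Kantorovich--Wasserstein distance $\mathsf d$. The aim is two-sided bounds of the form
\[ \log N_{\cM}(\epsilon) \approx N_X(c\epsilon)\cdot \log(1/\epsilon), \]
up to constants. Taking $\log$ again and dividing by $|\log\epsilon|$, the factor $\log(1/\epsilon)$ and the constants contribute $o(|\log \epsilon|)$. Hence $\frac{\log|\log N_{\cM}(\epsilon)|}{|\log\epsilon|}$ and $\frac{|\log N_X(\epsilon)|}{|\log\epsilon|}$ have the same $\varlimsup$ and $\varliminf$, up to a harmless rescaling of $\epsilon$.

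\emph{Upper bound} ($\overline\ord_B\cM(X)\le\overline\Dim_B X$). Fix $\epsilon$ and an $\epsilon$-dense set $\{x_1,\dots,x_n\}$ with $n=N_X(\epsilon)$. Let $P_1,\dots,P_n$ be a measurable partition with $P_i\subset B(x_i,\epsilon)$. Any $\nu\in\cM(X)$ is within $\mathsf d$-distance $\epsilon$ of $\sum_i \nu(P_i)\delta_{x_i}$, by transporting each $P_i$ to $x_i$. Round the weights to multiples of $\epsilon/(n\,\mathrm{diam}X)$ so that they still sum to $1$. This changes the measure by at most $\mathrm{diam} X$ times the total variation, so by $O(\epsilon)$. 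The number of such rounded weight vectors is at most $(Cn/\epsilon)^n$. Hence
\[ \log N_{\cM}(3\epsilon)\le n\log(Cn/\epsilon). \]
Since $\log n = O(|\log\epsilon|)$ whenever $\overline\Dim_B X<\infty$, this gives $\log\log N_{\cM}(3\epsilon)\le \log N_X(\epsilon)+O(\log|\log\epsilon|)$. This proves the bound; if $\overline\Dim_B X=\infty$ there is nothing to prove.

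\emph{Lower bound} ($\underline\Dim_B X\le\underline\ord_B\cM(X)$). Take a maximal $4\epsilon$-separated set $\{y_1,\dots,y_m\}$, so that $m\ge N_X(4\epsilon)$. Consider the measures $\nu_S=\frac1{k}\sum_{i\in S}\delta_{y_i}$ for subsets $S$ of size $k=\lfloor m/2\rfloor$. For two such measures, any coupling must move at least $|S\setminus S'|/k$ of the mass over a distance at least $4\epsilon$, so $\mathsf d(\nu_S,\nu_{S'})\ge 4\epsilon|S\triangle S'|/(2k)$. A Gilbert--Varshamov type argument gives a family of $e^{cm}$ subsets with pairwise symmetric difference at least $k/2$. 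The corresponding measures are then $\epsilon$-separated, so they form a $2\epsilon$-separated set up to constants. Therefore $\log N_{\cM}(\epsilon/2)\ge c\,m\ge c\,N_X(4\epsilon)$, and hence $\log\log N_{\cM}(\epsilon/2)\ge \log N_X(4\epsilon)-O(1)$. Dividing by $|\log\epsilon|$ and taking $\varliminf$ gives the claim, because the rescaling of $\epsilon$ by constants does not change dimensions or orders.

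\textbf{Main obstacle.} The combinatorial step in the lower bound is the delicate part: producing exponentially many subsets that are pairwise far apart in the normalized Wasserstein sense. I would handle it with the standard Varshamov--Gilbert lemma on $\{0,1\}^m$ restricted to constant-weight vectors. If that lemma is unavailable, a direct volume count works instead: Hamming balls of radius $k/2$ inside $\binom{[m]}{k}$ have size at most $e^{(1-c)m\log 2}$. The remaining bookkeeping is routine: constants inside $\epsilon$, the case of finite $X$ (where both orders are $0$ by convention), and the requirement that $\log n=O(|\log\epsilon|)$ in the upper bound.
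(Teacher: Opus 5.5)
Your proof is correct. The paper gives no proof of this statement; it only quotes it from \cite{BB21}. Your argument is the standard route to this result: discretize on an $\epsilon$-net and round the weights for the upper bound, and, for the lower bound, use a Gilbert--Varshamov packing of uniform measures on half-subsets of a maximal $4\epsilon$-separated set.

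Two cosmetic caveats. First, the two-sided estimate $\log N_{\cM}(\epsilon)\approx N_X(c\epsilon)\log(1/\epsilon)$ you announce is stronger than what you prove: your lower bound has no $\log(1/\epsilon)$ factor. This is harmless once you take the second logarithm. Second, for a finite $X$ with at least two points, the orders vanish because $\log N_{\cM}(\epsilon)\asymp|\log\epsilon|$, not by the convention, which only covers a one-point space.
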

When $\underline \Dim_B X=\overline \Dim_B X$, this means that the covering number of $ \cM(X) $ is ``roughly speaking'' $\exp( \epsilon^{-\Dim_B X})$. An immediate consequence of the latter theorem is:
\begin{corollary} For any system, it holds:
\[ \overline{\Ord} \Eme_\mathrm{top}(f) \le \overline{\Ord} \Eme'_\mathrm{top}(f) \le \overline \Dim_B X\; .\]
\end{corollary}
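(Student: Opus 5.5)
The corollary combines two elementary monotonicity facts with \cref{Bound topo}. I will prove the two inequalities separately.

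For the first inequality, $\overline{\Ord}\,\Eme_\mathrm{top}(f)\le \overline{\Ord}\,\Eme'_\mathrm{top}(f)$, I will check that the subset covered in \cref{def.te2} contains the one covered in \cref{def.te}. Let $\nu\in\cM^{\mathrm{erg}}_f$ be ergodic. By Birkhoff's ergodic theorem, $\nu$-a.e.\ $x$ satisfies $\se^f_n(x)\to\nu$, so $\nu\in\mathrm{Acc}(\se^f_n(x))_n$ for such an $x$. This step uses that $\cM(X)$ with the distance $\mathsf d$ metrizes the weak$\star$ topology, and Birkhoff applied to a countable dense family of continuous functions. Hence every $\epsilon$-cover of $\bigcup_x \mathrm{Acc}(\se^f_n(x))_n$ also covers $\cM^{\mathrm{erg}}_f$. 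This gives $\Eme_\mathrm{top}(f)(\epsilon)\le\Eme'_\mathrm{top}(f)(\epsilon)$ for every $\epsilon$, which is the earlier proposition. Since $t\mapsto\log|\log t|$ is nondecreasing for $t$ large, the inequality passes to $\overline{\Ord}$. The case where $\Eme_\mathrm{top}(f)(\epsilon)=1$ for small $\epsilon$ is handled by the stated convention that the order is then zero, and orders are nonnegative in any case.

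For the second inequality, $\overline{\Ord}\,\Eme'_\mathrm{top}(f)\le\overline\Dim_B X$, I note that $\bigcup_x\mathrm{Acc}(\se^f_n(x))_n\subset\cM(X)$. Any cover of $\cM(X)$ by $\epsilon$-balls covers this subset, so $\Eme'_\mathrm{top}(f)(\epsilon)\le N^B_{\cM(X)}(\epsilon)$. Taking $\overline{\Ord}$ and applying the upper bound $\overline\ord_B\cM(X)\le\overline\Dim_B X$ of \cref{Bound topo} concludes.

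The only point needing care is a convention mismatch. \cref{def.te} uses balls centred anywhere in $\cM(X)$, whereas the box-counting number $N^B$ might use centres in the set or a different ball type (open or closed). These variants differ by at most a factor $2$ in the radius, e.g.\ $N(\epsilon)\le N^B(\epsilon/2)$. Such a rescaling does not change $\overline{\Ord}$, because $|\log(\epsilon/2)|/|\log\epsilon|\to1$. I expect this bookkeeping to be the only mild obstacle.
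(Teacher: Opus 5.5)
Your proof is correct and follows the paper's route, which treats the corollary as immediate. The first inequality is the pointwise comparison $\Eme_{\mathrm{top}}(f)\le\Eme'_{\mathrm{top}}(f)$; you justify it by using Birkhoff to place each ergodic measure among the accumulation points of empirical measures. The second inequality comes from the inclusion $\bigcup_x \mathrm{Acc}(\se^f_n(x))_n\subset\cM(X)$ combined with the upper bound $\overline\Ord_B\cM(X)\le\overline\Dim_B X$ of \cref{Bound topo}.
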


Interestingly, the box order of the set of invariant measures is maximal in many cases:
\begin{otherthm}[\cite{BB21}]\label{Bound topo2} 
Let $f$ be a $C^{1+}$-mapping of a manifold which admits a basic hyperbolic set $K$ of box dimension $d$. 
Assume that $f$ is conformal expanding or that $f$ is a conservative surface diffeomorphism. Then the order of topological emergence of $f|K$ is $d$:
\[ 
 \underline \ord\, \cE_{top} 
= \overline \ord\, \cE_{top} = d\; .\]
\end{otherthm}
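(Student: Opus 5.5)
The upper bound $\overline\ord\,\cE_{top}\le d$ follows directly from \cref{Bound topo}. The space $\cM_{f|K}^{\mathrm{erg}}$ is contained in $\cM(K)$, so its $\epsilon$-covering number is at most that of $\cM(K)$. By \cref{Bound topo} this gives $\overline\ord\,\cE_{top}(f|K)\le \overline\Dim_B K = d$. So all the work is in the lower bound $\underline\ord\,\cE_{top}\ge d$: we must exhibit, for each small $\epsilon$, roughly $\exp(\epsilon^{-d+o(1)})$ ergodic measures that are pairwise $2\epsilon$-separated in $\mathsf d$.

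For the lower bound I would mimic the proof of the lower bound $\underline\Dim_B X\le\underline\ord_B\cM(X)$ in \cref{Bound topo}, but using only ergodic measures supported on periodic orbits. The steps are as follows.

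\begin{enumerate}
\item Fix $\epsilon$ and pick a maximal $\epsilon'$-separated set $\{x_1,\dots,x_M\}\subset K$, with $\epsilon'\approx C\epsilon$. Then $M\ge \epsilon^{-d+o(1)}$ by the definition of box dimension. In the conservative surface case one has to check that lower and upper box dimensions agree. This holds because a basic set of a conservative (or conformal expanding) system has a product structure whose stable and unstable slices have dimension given by Bowen-type pressure equations, so its box dimension exists.
\item For each subset $S\subset\{1,\dots,M\}$ of cardinality $M/2$, construct a periodic orbit $\mathcal O_S$ in $K$ whose empirical measure $\nu_S$ gives mass $\approx 2/M$ to a small ball around each $x_i$ with $i\in S$, and mass $\approx 0$ to the balls around $x_i$ with $i\notin S$. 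By Stirling there are $\binom{M}{M/2}\approx 2^M$ such $S$.
\item Show that distinct $S,S'$ of the same size differ in at least a proportion of indices, after passing to a Gilbert–Varshamov subfamily of size $2^{cM}$. Then testing against a sum of bump Lipschitz functions of height $\epsilon'$ centered at the $x_i$ gives $\mathsf d(\nu_S,\nu_{S'})\gtrsim \epsilon'\cdot c\ge 2\epsilon$ after adjusting constants. This yields $\cE_{top}(\epsilon)\ge 2^{cM}$, hence $\underline\ord\ge d$.
\end{enumerate}

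The periodic orbits in step 2 come from the specification/shadowing property of the basic set. One concatenates, in a fixed cyclic order, orbit segments that visit each chosen $x_i$, joined by transitions of bounded length. The orbit then spends a controlled fraction of its time near each $x_i$, $i\in S$, and shadowing closes it up.

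The main obstacle is that an orbit cannot stay near a single point $x_i$. Visiting $x_i$ also forces time spent near $f(x_i),f^2(x_i),\dots$ and along the transitions, so the empirical measure is spread out. I would handle this with a Markov partition at scale $\epsilon$: use cylinders of depth $n\approx\log(1/\epsilon)/\log\lambda$, and encode $S$ as a choice of roughly half of the $n$-cylinders that the orbit traverses. The empirical measures of such orbits then differ by a definite amount on a collection of separated cylinders. Bounded distortion (this is where conformality or area preservation is used) is what gives two things: the number of depth-$n$ cylinders is comparable to $\epsilon^{-d}$, and all cylinders have diameter comparable to $\epsilon$. The delicate estimate is that the Lipschitz test function still separates $\nu_S$ from $\nu_{S'}$ even though each cylinder is visited along with its iterates. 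I would control this by choosing $S$ to be shift-invariant-free in the coding, for instance by using words with a rare marker, and losing only a constant factor.
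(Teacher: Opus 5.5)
Your upper bound is correct: $\cM^{\mathrm{erg}}_{f|K}\subset\cM(K)$ and \cref{Bound topo} give $\overline\ord\,\cE_{top}\le d$. Reducing the lower bound to $\exp(\epsilon^{-d+o(1)})$ periodic measures that are pairwise $\epsilon^{1+o(1)}$-apart is also a sound plan. (The survey does not prove this statement; it quotes it from \cite{BB21}.) However, Step~2 cannot hold for arbitrary $S$, since $f$ carries the balls around chosen $x_i$ onto balls around unchosen ones. So the lower bound rests entirely on your last paragraph, and there the geometric claim that is supposed to produce the exponent $d$ is false.

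Bounded distortion comes from the $C^{1+}$ hypothesis, not from conformality. It only says that the depth-$n$ cylinder through $x$ is comparable to a ball of radius $|(f^n)'(x)|^{-1}$. These radii range from $e^{-n\chi_{\max}}$ to $e^{-n\chi_{\min}}$, where $\chi_{\min},\chi_{\max}$ are the extreme Lyapunov exponents on $K$. The number of depth-$n$ cylinders is $\asymp e^{nh_{top}}$. By contrast, $d$ is the zero of $t\mapsto P(-t\log|Df|)$, which lies strictly between $h_{top}/\chi_{\max}$ and $h_{top}/\chi_{\min}$ unless $\log|Df|$ is cohomologous to a constant. So at a fixed depth you get one of two outcomes. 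Either many cylinders are much smaller than $\epsilon$, and your bump test cannot separate them at scale $\epsilon$. Or all cylinders have size at least $\epsilon$, and you only get the exponent $h_{top}/\chi_{\max}<d$.

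Area preservation does not rescue the surface case. The rectangle $[a_{-n}\dots a_n]$ about $x$ has unstable side $\asymp\|Df^n|_{E^u}(x)\|^{-1}$ and stable side $\asymp\|Df^n|_{E^u}(f^{-n}x)\|^{-1}$. These are computed along different orbit segments, so symmetric-depth rectangles are not round either.

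The missing ingredient is a Moran cover. Use cylinders of variable depth whose sides are all $\asymp\epsilon$; for surfaces, use rectangles $[a_{-p}\dots a_q]$ with $p$ and $q$ chosen independently. Bounded distortion together with conformality (resp. one-dimensionality of $E^s$ and $E^u$) makes these comparable to $\epsilon$-balls. Since they cover $K$ by sets of diameter $O(\epsilon)$, there are at least $N^B_K(C\epsilon)=\epsilon^{-d+o(1)}$ of them. This is where your Step~1 must be tied to the coding. Alternatively, pass to a sub-horseshoe with almost constant rates and dimension at least $d-\delta$.

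Moran words have variable lengths, so a shorter one can occur inside a longer one. To handle this, frame the words on both sides by a marker $M$ that they avoid. Then test with bumps on the cylinders of $MwM$, which the orbit of $\dots Mw_1Mw_2\dots$ visits only at aligned times. Aligned times are a fraction $\asymp 1/|\log\epsilon|$ of the period. A Gilbert--Varshamov family then gives $\mathsf d(\nu_S,\nu_{S'})\ge c_M\,\epsilon/|\log\epsilon|$. So the loss is a factor $|\log\epsilon|$, not the constant you claim.

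Two losses remain, and both are harmless for the order. One is this factor $|\log\epsilon|$ in the separation. The other comes from discarding words that contain $M$; markers can be chosen so that the $M$-avoiding subsystem has dimension close to $d$.
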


\subsection{Local emergence} 
Given an empirical system $(f,X,\mu)$ we are now ready to define:

\begin{definition} \label{def.le}
The \bfemph{local emergence dimensions} of $f$ are:
\[ \overline \Dim\cE^{loc}_\mu(f) := \int \overline \Dim_{loc}\, \se_*\mu\, d\se_*\mu \qand 
 \underline \Dim\cE ^{loc}_\mu (f) := \int \underline \Dim_{loc} \, \se_*\mu \, d\se_*\mu \; .\] 
 
 The \bfemph{local emergence orders} of $f$ are:
\[ \overline \Ord\cE^{loc}_\mu(f) := \int \overline \Ord_{loc}\, \se_*\mu\, d\se_*\mu \qand 
 \underline \Ord\cE ^{loc}_\mu (f) := \int \underline \Ord_{loc} \, \se_*\mu \, d\se_*\mu \; .\] 
\end{definition}
 Helfter solved \cite[Pbm 4.22]{Be20} by showing:
\begin{otherthm}[{\cite[Thm C and F]{He25}}]
The local emergences are at most the metric emergences:
\[ \overline \Dim\cE^{loc}_\mu(f) \le \overline \Dim\cE_\mu(f) \qand \underline \Dim\cE^{loc}_\mu(f) \le \underline \Dim\cE_\mu(f) \; .\]
\[ \overline \Ord\cE^{loc}_\mu(f) \le \overline \Ord\cE_\mu(f) \qand \underline \Ord\cE^{loc}_\mu(f) \le \underline \Ord\cE_\mu(f) \; .\]
\end{otherthm}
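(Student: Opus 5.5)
We compare the local quantity, an integral of local dimensions/orders of $\nu:=\se_*\mu$, with the global quantization number of $\nu$. By \cref{quantization2emergence} (in the empirical setting), $\cE_\mu(f)(\epsilon)$ equals the quantization number $Q_\nu(\epsilon)$. So it suffices to prove, for any probability measure $\nu$ on the compact metric space $Y=\cM(X)$,
\[ \int \overline \Dim_{loc}\nu\, d\nu \le \overline\Dim\, Q_\nu, \qquad \int \overline \Ord_{loc}\nu\, d\nu \le \overline\Ord\, Q_\nu, \]
and similarly for the lower versions. This is a statement purely about measures on metric spaces, the analogue of the classical inequality ``lower local dimension $\le$ lower quantization dimension''.

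\textbf{Key mass estimate.} Fix $\epsilon$ and $N=Q_\nu(\epsilon)$ points $y_1,\dots,y_N$ with $\int \mathsf d(y,\{y_i\})\,d\nu(y)<\epsilon$; this is the reformulation of being $\epsilon$-close to the convex hull of the $\delta_{y_i}$, as in the proof of \cref{quantization2emergence_gen}. By Markov's inequality, the set $G_\epsilon=\{y:\mathsf d(y,\{y_i\})<\epsilon/\eta\}$ has $\nu$-mass at least $1-\eta$. Now let $S_\epsilon$ be the set of $y$ with $\nu(B(y,2\epsilon/\eta))\le \eta/N$. Each $y\in S_\epsilon\cap G_\epsilon$ lies in some $B(y_i,\epsilon/\eta)$. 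Any such ball that meets $S_\epsilon$ is contained in $B(y,2\epsilon/\eta)$ for a point $y$ of $S_\epsilon$, so it has mass $\le\eta/N$. Summing over the $N$ balls gives $\nu(S_\epsilon\cap G_\epsilon)\le\eta$, hence $\nu(S_\epsilon)\le 2\eta$. In words: outside a set of mass $2\eta$, every ball of radius $r=2\epsilon/\eta$ has mass $\ge \eta/Q_\nu(\epsilon)$.

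\textbf{From mass estimate to the integral.} Take $\epsilon_k=2^{-k}$ and apply the estimate with $\eta_k=\eta 2^{-k}$. For the dimension and order statements this is harmless, because replacing $\epsilon$ by $C\epsilon/\eta_k$ does not change $\log$-rates, or the rates $\log|\log\cdot|$ against $|\log\epsilon|$, up to negligible factors. The one exception is the polynomial loss $\eta_k=2^{-k}$ in the dimension case. To handle it, I would instead fix $\eta$ and use Borel--Cantelli only along a sparse subsequence, or simply use a $\limsup$ set argument: $\nu(\limsup_k S_{\epsilon_k})\le 2\eta$. Outside this set, for all large $k$,
\[ \frac{|\log\nu(B(y,r_k))|}{|\log r_k|}\le \frac{\log Q_\nu(\epsilon_k)+|\log \eta|}{|\log r_k|}, \]
with $r_k\asymp\epsilon_k$; taking $\log$ of the numerator gives the order version. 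Interpolating between dyadic scales by monotonicity of $r\mapsto\nu(B(y,r))$ yields, for $\nu$-a.e. $y$ outside a set of mass $2\eta$, the bound $\overline\Dim_{loc}\nu(y)\le\overline\Dim\,Q_\nu$. Letting $\eta\to0$ gives this a.e., and integrating gives the upper statement.

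\textbf{Main obstacle: the lower versions.} Here $\varliminf$ of $Q_\nu$ is attained only along some sequence $\epsilon_k$, and a pointwise $\liminf$ bound for each $y$ needs the good scales to be common to almost every $y$. This works because the exceptional sets along the chosen subsequence have small measure and we only need infinitely many good $k$: apply the estimate along the subsequence realizing $\underline\Dim\,Q_\nu$ (resp. $\underline\Ord$), which we may take sparse. Then $\nu(\limsup_k S_{\epsilon_k})\le2\eta$, and for $y$ outside this set all large $k$ give the bound, hence $\underline\Dim_{loc}\nu(y)\le\underline\Dim\,Q_\nu$. Integrating finishes the proof.
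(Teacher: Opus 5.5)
The paper gives no proof of this statement; it is imported from Helfter, where it is obtained within his general theory of scales. Your route is a legitimate, more elementary alternative: reduce to the quantization number $Q_\nu$ of $\nu=\se_*\mu$ via \cref{quantization2emergence}, then prove pointwise bounds such as $\overline\Dim_{loc}\nu(y)\le\overline\Dim\,Q_\nu$ for $\nu$-a.e.\ $y$, which is even stronger than the integrated inequalities. The reduction and your mass estimate $\nu(S_\epsilon)\le 2\eta$ are correct. The passage from that estimate to the pointwise bounds, however, has a genuine gap.

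\textbf{The gap.} The claim $\nu(\limsup_k S_{\epsilon_k})\le 2\eta$ does not follow from knowing $\nu(S_{\epsilon_k})\le 2\eta$ for every $k$.
\begin{itemize}
\item That information only gives $\nu(\liminf_k S_{\epsilon_k})\le\liminf_k\nu(S_{\epsilon_k})\le 2\eta$. The $\limsup$ set of sets of mass $2\eta$ that sweep across the space can have full measure.
\item Borel--Cantelli would need $\sum_k\nu(S_{\epsilon_k})<\infty$. Passing to a sparse subsequence does not give this, since each term is only bounded by the fixed $2\eta$.
\item For the upper local quantities you need good radii at \emph{all} large $k$, along a sequence with $|\log\epsilon_{k+1}|/|\log\epsilon_k|\to 1$. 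Otherwise the monotone interpolation loses a constant factor in the exponent.
\item Your first choice $\epsilon_k=2^{-k}$, $\eta_k=\eta 2^{-k}$ fails more drastically than you say. The radius $2\epsilon_k/\eta_k=2/\eta$ does not shrink at all, so this breaks both the dimension and the order estimates, not only the former.
\end{itemize}
As written, the two upper inequalities are not proved, and the lower ones rest on the same false inequality.

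\textbf{The repair.} Both are easy to fix.
\begin{itemize}
\item \emph{Upper versions.} Take $\epsilon_k=2^{-k}$ and $\eta_k=k^{-2}$, and set $r_k=2k^2 2^{-k}$. Then $\sum_k\nu(S_{\epsilon_k})<\infty$, so by Borel--Cantelli $\nu$-a.e.\ $y$ satisfies $\nu(B(y,r_k))>k^{-2}/Q_\nu(2^{-k})$ for all large $k$. We have $|\log r_k|=k\log 2-O(\log k)$, $|\log\eta_k|=O(\log k)$ and $r_{k+1}/r_k\to\frac12$. Interpolating by monotonicity of $r\mapsto\nu(B(y,r))$ therefore gives $\overline\Dim_{loc}\nu\le\overline\Dim\,Q_\nu$ and $\overline\Ord_{loc}\nu\le\overline\Ord\,Q_\nu$ a.e.
\item \emph{Lower versions.} Your remark that infinitely many good scales suffice is the right one, but it must be used with the correct set. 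Take a sequence $\epsilon_k$ realizing $\underline\Dim\,Q_\nu$ (resp.\ $\underline\Ord\,Q_\nu$) and keep $\eta$ fixed. Every $y\notin\liminf_k S_{\epsilon_k}$ has infinitely many good $k$, which gives $\underline\Dim_{loc}\nu(y)\le\underline\Dim\,Q_\nu$ (resp.\ the order version) off a set of mass at most $2\eta$. Letting $\eta=1/m\to 0$ yields the bound $\nu$-a.e.
\end{itemize}
Integrating these pointwise bounds against $\nu$ then concludes the proof.
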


\section{Examples of dynamics with high emergence}\label{section 2}
In this section we will focus on the case where $(X,\mu)= (M,\Leb)$ is a compact manifold endowed with a volume form $\Leb$, and $f$ is a smooth map of $X$, as it was originally studied in \cite{Be17}. 

 \subsection{A program on typicality of high emergence} 
In \cite{Be17}, we introduced the notion of emergence to quantify how hard it is to describe the statistical behavior of a differentiable dynamical system. One way to describe it is to look at the number of probability measures needed to describe this behavior with precision $\epsilon$. If this number grows super-polynomially as $\epsilon\to0$,
we say that the emergence is high:
 \begin{definition} 
The emergence of $f$ is \emph{high} if $\overline \Dim\, \cE_\Leb(f) =\infty$. 
\end{definition} 
Equivalently, the emergence of $f$ is high if  $\cE_\Leb(f)(\epsilon)$ admits no polynomial upper bound
in $\epsilon^{-1}$ as $\epsilon\to0$. We notice that the emergence is a lower bound on the complexity (both in space and in time) to approximate numerically a dynamical system by statistics. Following the
celebrated Cobham’s thesis, an algorithm which is super-polynomial is – in practice – not feasible \cite{Co65}.

\begin{conjecture}[\cite{Be17}]\label{mainconj} High emergence is typical among many classes of differentiable dynamical systems.
\end{conjecture} 
 There are many ways to work on this conjecture, as there are both many classes of dynamical systems and ways to define ``typicality" among differentiable dynamical systems. 
 
 When the class $\cU$ is ``flexible", for instance when $\cU$ is an open subset of a Fréchet space of smooth dynamics, we studied the following notions of typicality:
 \begin{itemize}
 \item \emph{Topological genericity}: the property holds true on a countable intersection of open-dense subsets of $\cU$. 
 \item \emph{Kolmogorov typicality}: for every compact manifold $\cP$, with $\cU_\cP$ the space of $C^\infty$-families of $(f_p)_{p\in \cP}$ of maps $f_p\in \cU$, a topologically generic family $(f_p)_{p\in \cP}\in \cU_\cP$ satisfies the property for Lebesgue a.e. $p\in \cP$. 
 \end{itemize} 
 When the class is more rigid (e.g. polynomials of some degree, Euler flow), the mere existence of a system realizing the property is evidence of the typicality of this property.

 So far the following classes have been studied: smooth dynamics, symplectic dynamics, analytic pseudo-rotations, real unimodal maps, real Hénon maps, complex rational functions and complex Hénon maps. We recall the statement of these results. 
 \subsection{Examples of dynamics with polynomial emergence} 
 Let us recall examples from \cite{Be17,BB21} where $(X,\mu)=(M,\Leb)$. 
 
By definition, if the system $(f,M,\Leb) $ is ergodic, then the emergence is minimal: $\cE^f_\Leb (\epsilon)=1$ for every $\epsilon$. Examples of ergodic systems encompass irrational rotation or the doubling angle map on the circle. When a system is moreover uniquely ergodic, then the topological emergence is minimal: $\cE^f_{top} (\epsilon)=1$. 

 Among non-conservative systems, let us mention that any Axiom A diffeomorphism $f$ displays finitely many measures $(\mu_j)_{j\le N}$ whose basins cover a.e. point of $X$. Then the emergence is finite: $\lim_{\epsilon\to 0} \cE^f_\Leb (\epsilon)\le N$. 
 
 Let us mention also that the identity of $M$ satisfies that $\se^{id}_x=\delta_x$, and since $x\mapsto \delta_x$ is an isometric inclusion, we obtain that $\Dim \cE^{id} _\Leb =\Dim M$. 
 
 When $f$ is a symplectomorphism of $M^{2n}$ displaying KAM tori, then it displays an invariant  subset which is symplectomorphic to $K\times \mathbb T^n$ with $K\subset \R^n$ of positive Lebesgue measure, and the dynamics leaves invariant each $\{k\}\times \mathbb T^n $, $k \in K$.  Then the emergence dimension is at least $\Dim K=n$.
 
 \subsection{Typicality of symplectomorphisms with high emergence} 
 The first example of dynamics with high emergence was given in \cite{BB21}:
 \begin{otherthm}[\cite{BB21}]\label{thmBB1} There exists a  $C^\infty$-symplectic flow $(\phi^t)_t$ on the cylinder $\mathbb A= \mathbb T^1\times [0,1]$ whose emergence has maximal order:
 \[ \underline \Ord\, \cE_\Leb(\phi^1) =2\, .\] 
 \end{otherthm}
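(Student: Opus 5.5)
We need to construct a symplectic flow on the annulus with emergence of order 2, which is the maximal possible. Since the flow preserves $\Leb$, the metric emergence is at most the topological emergence by \cref{comparision metric top}. The corollary of \cref{Bound topo} then gives $\overline\Ord\,\cE_\Leb(\phi^1)\le \overline\Dim_B\mathbb A=2$. So only the lower bound $\underline\Ord\,\cE_\Leb(\phi^1)\ge 2$ has to be proved. By \cref{quantization2emergence}, this amounts to showing that the ergodic decomposition $\se_*\Leb$ has quantization number at least $\exp(\epsilon^{-2+o(1)})$.

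\textbf{Construction.} I would take $\phi^t$ to be the Hamiltonian flow of a smooth $H:\mathbb A\to\R$, flat at the boundary. Most of the annulus should be foliated by closed level curves of $H$, on which the flow is periodic and uniquely ergodic. The ergodic measure carried by a regular closed level curve $\gamma$ is the normalized time measure $dt/T(\gamma)$. The ergodic decomposition of $\Leb$ is then the pushforward of $\Leb$ by the map sending a point to the measure of its level curve. This is a one-parameter family per component, so to get order $2$ the family of curves must be very rich.

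The plan is to build $H$ with infinitely many nested scales. At scale $2^{-k}$, partition $\mathbb A$ into about $4^k$ squares. Inside each square, place many small "islands" (local maxima of $H$ surrounded by closed orbits). These islands should be arranged so that the ergodic measures of orbits circulating around different unions of islands are $2^{-k}$-separated. The target is a family of about $\exp(c\,4^k)$ ergodic measures that are pairwise $c'2^{-k}$-apart in the Wasserstein distance, each indexed by a choice of subset of the $4^k$ squares visited. Each such measure must also carry Lebesgue-proportion at least $\exp(-C4^k)$. One way to achieve this is a tree of separatrices (a "multi-level" Reeb graph) whose leaves at depth $k$ correspond to such subsets. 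The orbits near these leaves should spend time in the prescribed cells, which is done by routing level curves along corridors through chosen squares. A $C^\infty$ construction is possible by letting the heights of the successive modifications decay super-exponentially, so that the series converges in every $C^r$.

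\textbf{Lower bound on quantization.} Suppose $N$ measures $\mu_1,\dots,\mu_N$ satisfy $\int \mathsf d(\se,\{\mu_i\})\,d\Leb<\epsilon$. By Markov's inequality, at least half of the $\se_*\Leb$-mass lies within $2\epsilon$ of the $\mu_i$. Each ball of radius $2\epsilon$ contains at most a bounded number of the $c'2^{-k}$-separated clusters, where $2^{-k}\approx 5\epsilon/c'$. This forces $N\gtrsim \exp(c\,4^k)=\exp(c''\epsilon^{-2})$. One must also check that the clusters carry comparable mass, or argue via a balanced weight distribution.

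\textbf{Main obstacle.} The hardest step is the geometric construction. Level curves of a single planar Hamiltonian must be flexible enough to realize exponentially many (in $\epsilon^{-2}$) well-separated time-averages at every scale, while all scales remain compatible and the resulting $H$ is smooth. Closed curves cannot cross, so "visiting an arbitrary subset of cells" has to be realized through nesting and long thin corridors. I would first try a Peano-curve-like snake at each scale that threads all cells, with level curves detaching at branching points.
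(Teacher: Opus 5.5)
Your reduction is sound. The upper bound $\overline\Ord\,\cE_\Leb(\phi^1)\le 2$ follows from \cref{comparision metric top} and the corollary of \cref{Bound topo}. Via \cref{quantization2emergence}, the lower bound is a statement about how $\se_*\Leb$ spreads over roughly $\exp(c\epsilon^{-2})$ separated measures at every scale.

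The gap is that the construction, which is the whole content of the theorem, is not carried out, and your sketch targets the wrong quantity. The ergodic measure on a closed level curve $\gamma$ of $H$ is $ds/|\nabla H|$ normalized, i.e.\ the normalized area of a thin band of neighbouring level curves. So it is governed by where the flow is slow, not by which islands $\gamma$ encloses or which squares it passes through. Moreover, curves near the separatrices of your Reeb tree have measures collapsing onto the saddle points.

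The paper's idea removes the topological obstacle you identify. $\phi^t$ is conjugate to the integrable twist flow $(x,y)\mapsto(x+t\theta(y),y)$ by an area-preserving map $h$ of the open annulus. The invariant curves are therefore the nested essential curves coming from the circles $\mathbb T\times\{y\}$, and their ergodic measures are the images under the conjugacy of the uniform measures on these circles. Since only area matters, $h$ can send thin bands of circles, on successive $y$-intervals accumulating at $y=0$, to long thin snakes whose area sits in prescribed squares. This realizes any prescribed separated family with prescribed weights. Smoothness then comes from taking $\theta$ vanishing and extremely flat at $0$, flat enough to beat all derivatives of $h$.

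Your ``super-exponentially decaying heights'' do not achieve this by themselves. A modification whose gradient is smaller than that of the previous layer creates no new critical points, hence no new islands. So the finer layers must sit where the previous ones are locally constant, i.e.\ on disjoint regions. The decay must also be chosen after, and against, the geometry of the ever thinner corridors.

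The quantitative part also needs repair. The scale-$k$ measures carry only a fraction $m_k$ of $\Leb$. Your Markov step (half of the whole $\se_*\Leb$-mass lies within $2\epsilon$ of the $\mu_i$) can therefore be satisfied entirely by other scales and forces nothing. Applied to the scale-$k$ part alone, it gives only that half of that part lies within $2\epsilon/m_k$ of the $\mu_i$. The relevant separation is then $2^{-k}\approx\epsilon/m_k$. To get order $2$ you need the mass $m_k$ split evenly among the $\exp(c4^k)$ measures, with $|\log m_k|=o(k)$ (e.g.\ $m_k\asymp k^{-2}$). Your requirement that each measure carry at least $\exp(-C4^k)$ allows total mass $\exp(-(C-c)4^k)$, which yields order $0$.

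Finally, the statement concerns the time-one map, not the flow. On a curve of period $T$, $\phi^1$ is a rotation by $1/T$ and is ergodic only if $1/T\notin\mathbb Q$. You must therefore ensure the period function is irrational almost everywhere. In the paper's model this means $\theta(y)\notin\mathbb Q$ for a.e.\ $y$, e.g.\ $\theta$ strictly monotone on $(0,1]$.
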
 
 The flow $\phi^t$ is actually conjugated to a map of the form $h^{-1} \circ R_{\theta(y)} \circ h$, where $\theta:[0,1]\to \R$ vanishes at $0$ and is ``extremely" flat at $0$, while $h$ sends the Lebesgue measure supported by circles $\mathbb T\times \{y\}$ to measures which are far apart, so that the emergence is indeed of order 2.  
 
 Using the density of dynamics displaying a periodic spot\footnote{an open subset formed by periodic points.} \cite{GST} and KAM's theorem, we proved that a set of positive measure of the latter construction appears for generic conservative maps displaying elliptic periodic points:
 \begin{otherthm}[\cite{BB21}]\label{thmBB2} 
 Let $(S, \Leb)$ be a compact surface endowed with a volume form, and let $\cU$ be the open subset of $\Diff^\infty_{\Leb}(S)$ formed by dynamics displaying an elliptic periodic point. Then a generic map $f\in \cU$ has maximal emergence order:
 \[ \overline \Ord\, \cE_\Leb(f) =2\, .\] 
 \end{otherthm}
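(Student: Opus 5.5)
The proof has two halves. The upper bound $\overline \Ord\, \cE_\Leb(f)\le 2$ holds for every $f\in \cU$. Since $f$ preserves $\Leb$, \cref{comparision metric top} gives $\cE_\Leb(f)\le \cE_{top}(f)$, and the corollary of \cref{Bound topo} gives $\overline\Ord\,\cE_{top}(f)\le \overline\Dim_B S = 2$. The content is therefore the lower bound on a residual set. I will write it as a countable intersection of open dense sets
\[\cU_{m} := \Big\{ f\in \cU : \exists\, \epsilon\in(0,1/m),\ \cE_\Leb(f)(\epsilon) > \exp\big(\epsilon^{-2+1/m}\big)\Big\},\]
and show that each $\cU_m$ contains an open dense set. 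Any $f\in\bigcap_m \cU_m$ then satisfies $\overline\Ord\,\cE_\Leb(f)\ge 2$.

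\textbf{Openness.} I would not try to prove that $\cU_m$ itself is open, since metric emergence is not obviously semicontinuous. Instead I would produce open sets of maps carrying a \emph{robust} witness of large emergence. The witness is a family of many disjoint invariant regions of definite measure on which the empirical measures are pairwise far apart; concretely, a collection of KAM invariant circles. By KAM, a smooth enough perturbation keeps a positive-measure Cantor family of invariant curves near each given curve. The empirical measures of points on these curves are close, in the distance $\mathsf d$, to the original invariant measures on the curves. Through the characterization \cref{quantization2emergence}, the ergodic decomposition $\se_*\Leb$ then has a portion of mass at least some $\eta>0$ spread over a set whose $\epsilon$-covering requires about $\exp(\epsilon^{-2+1/m})$ points. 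This yields a lower bound on the quantization number, uniformly over a neighbourhood of $f$. The quantization number of a measure restricted to a piece of mass $\eta$ is controlled up to replacing $\epsilon$ by $\epsilon/\eta$, and that loss is harmless at the level of orders.

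\textbf{Density.} Take $f\in\cU$ with an elliptic periodic point. By \cite{GST}, $f$ can be perturbed in $C^\infty$ to a map $g$ having a periodic spot, an open disc $D$ on which some iterate $g^p$ is the identity. Inside $D$, I perform a second local $C^\infty$-small conservative perturbation, supported in a small disc and conjugating the map there to a rescaled copy of the time-one map $\phi^1$ of \cref{thmBB1}. The emergence of this copy, restricted to that disc, has order $2$. I would use \cref{quantization2emergence} together with the fact that the empirical measures of $g^p$ and of $g$ correspond by averaging over the $p$ iterates of the disc, which changes distances by at most bounded factors. After this step the model exhibits the required lower bound at some scale $\epsilon<1/m$. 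Its invariant circles are twist circles because $\theta$ is monotone, so the KAM argument from the openness step applies and thickens the witness into an open set.

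\textbf{Main obstacle.} The hard step is reconciling the flatness of the construction with KAM. The model of \cref{thmBB1} achieves order $2$ only because $\theta$ is extremely flat at $0$, and smallness of the perturbation requires shrinking the disc. Both effects weaken the twist and therefore shrink the KAM neighbourhood. The way around this is that each $\cU_m$ involves only one scale $\epsilon$. So I would truncate the flat model, using a finite-scale version of \cref{thmBB1} with nondegenerate twist that realizes $\exp(\epsilon^{-2+1/m})$ far-apart circle measures at a single fixed scale. The KAM persistence radius then depends only on this finite model, and openness and density hold scale by scale.
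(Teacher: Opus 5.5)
Your outline matches the argument the paper attributes to \cite{BB21}: the upper bound comes from $\cE_\Leb(f)\le\cE_{top}(f)$ and \cref{Bound topo}, and the lower bound from a Baire argument in which the periodic spots of \cite{GST} receive a copy of the model of \cref{thmBB1}, made robust at each fixed scale by KAM. Two points need repair when you write it out. First, a spatially rescaled copy of $\phi^1$ is not even $C^1$-close to the identity, since shrinking the disc leaves first derivatives unchanged and inflates higher ones; you must instead insert $\phi^t$, or the truncated model with rotation profile $\delta\theta_0$, with $t$ or $\delta$ small, which keeps the same invariant circles and ergodic decomposition. Second, in the openness step a positive-measure family near each curve is not enough: a lower bound on the quantization number needs the witness mass to stay evenly spread over the $\epsilon$-separated circle measures, which a large covering number alone does not give, so you need the KAM measure estimate that persisting circles fill all but a small proportion of each sub-strip of the finite model.
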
 
 This theorem is stronger than the previous theorem because we have a generic set of maps with high emergence, but it is weaker because it deals with discrete time and with $\overline \Ord$ instead of $\underline \Ord$. 
 
 Recently we proved:
 \begin{otherthm}[\cite{BT25}]\label{thmBT} 
 Let $(M, \omega)$ be a compact symplectic manifold, and let $\cU$ be the open subset of $\Symp^\infty (M)$ formed by symplectomorphisms displaying a non-degenerate totally elliptic periodic point\footnote{A periodic point whose eigenvalues are of modulus 1, non-resonant up to the order 4, and such that the twist condition is satisfied   by the third derivative.}. Then Kolmogorov typically, a symplectomorphism in $\cU$ has maximal emergence order:
 \[ \overline \Ord\, \cE_\Leb(f) =\Dim M\, .\] 
 \end{otherthm}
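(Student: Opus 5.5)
The theorem has two halves. The upper bound $\overline\Ord\,\cE_\Leb(f)\le \Dim M$ is automatic: since $f$ preserves $\omega^{n}$, \cref{comparision metric top} and the Corollary of \cref{Bound topo} give $\overline\Ord\,\cE_\Leb(f)\le\overline\Ord\,\cE_{top}(f)\le \overline\Dim_B M=\Dim M$ for every $f\in\cU$. All the work is the lower bound, Kolmogorov typically. By \cref{quantization2emergence}, it suffices to show that the ergodic decomposition $\se^f_*\Leb$ has quantization number of order $\Dim M=2n$ along a sequence $\epsilon_k\to0$.

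The plan mimics the surface case \cref{thmBB2}, replacing genericity by Kolmogorov typicality.

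\textbf{Step 1 (local normal form).} Near a non-degenerate totally elliptic periodic point of period $q$, the Birkhoff normal form to order 4, with the non-resonance and twist hypotheses, shows that $f^q$ is a small perturbation of an integrable twist map $(I,\varphi)\mapsto(I,\varphi+\nabla h(I))$ on a neighborhood $\cong B\times\mathbb T^n$, with $\nabla h$ a local diffeomorphism.

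\textbf{Step 2 (renormalization to a universal model).} In a generic $C^\infty$-family $(f_p)_{p\in\cP}$, I would show that for Lebesgue a.e.\ $p$ and every scale there are renormalization domains near resonances on which a rescaled iterate of $f_p$ is $C^\infty$-close to an arbitrarily prescribed symplectic map. Here I would use the Turaev-type universality of area-preserving dynamics near elliptic points, in the $2n$-dimensional version. The intended prescribed map is a model like that of \cref{thmBB1}, taken in dimension $2n$: a product-type integrable map $h^{-1}\circ R_{\theta(y)}\circ h$ on $\mathbb T^n\times[0,1]^n$. Its rotation vector is extremely flat near a resonant torus, and the conjugacy $h$ spreads the invariant tori so that their Lebesgue measures are far apart in $\cM$. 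The ergodic decomposition then contains a parametrized family of measures that is roughly $\epsilon$-separated in numbers $\exp(\epsilon^{-2n})$, matching the lower bound of \cref{Bound topo}.

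\textbf{Step 3 (persistence).} KAM theory in dimension $2n$ makes a positive-measure set of these invariant tori persist under perturbation. This yields, at the chosen scale $\epsilon_k$, a positive-Lebesgue portion of the ergodic decomposition carried by about $\exp(\epsilon_k^{-2n+o(1)})$ well-separated ergodic measures, each with comparable weight. That forces the quantization number to be at least of that order, since an $\epsilon$-neighborhood of a convex hull of $N$ Diracs cannot capture a measure spread over many more separated atoms.

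\textbf{Step 4 (assembling over scales).} The main obstacle is making Step 2 hold for a.e.\ parameter rather than for a dense set of maps. Genericity in $\cU$ alone only produces the construction on a residual set, whereas Kolmogorov typicality requires, for a generic family, that a.e.\ $p$ sees the renormalized model at infinitely many scales. I would handle this with a Borel–Cantelli style argument. At scale $k$, perturb the family so that the set of parameters for which the scale-$k$ model appears with the required $\epsilon_k$-emergence has measure at least $1-2^{-k}$. This condition is open in the family topology because it is a KAM-stable property at a fixed scale. Intersecting these open dense conditions over $k$ gives a generic family in which a.e.\ $p$ satisfies them for infinitely many $k$. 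That gives $\overline\Ord\,\cE_\Leb(f_p)\ge 2n$, and only $\overline\Ord$ because the scales are sparse.
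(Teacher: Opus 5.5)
The survey does not prove this statement; it is quoted from \cite{BT25}. So I can only judge your outline on its own terms. Three parts are fine: the upper bound via \cref{comparision metric top} and \cref{Bound topo}, the reduction to the quantization number of $\se_*\Leb$, and the Baire/Borel--Cantelli framework of Step~4. The gap is that the one step carrying the content of the theorem is assumed, not proved.

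\textbf{The parameter-uniform step is missing.} Step~2 asserts that for a generic family, for a.e.\ $p$ and every scale, renormalized iterates approximate any prescribed map, and it invokes Turaev-type universality. Those results are topological-genericity statements about single maps. In them the resonances, renormalization domains, return times and the perturbations producing them depend on the map in an uncontrolled way. They say nothing about the Lebesgue measure of the set of good parameters. Step~4 concedes this (if Step~2 held, Step~4 would be unnecessary), but then simply assumes the density statement: that any family can be perturbed so that the scale-$k$ model appears on a parameter set of measure at least $1-2^{-k}$. That density statement \emph{is} the theorem. What is needed is a perturbation of the family, smooth in $p$, that implants the model for almost all parameters at once. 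This is hard because the elliptic point, its frequency vector, the resonant tori and the renormalization domains all move with $p$. A perturbation tailored to one parameter realizes the model only on a parameter set of tiny measure, and you give no parameter-uniform mechanism.

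\textbf{The scales are not tracked, and as written Steps~2--3 do not give order $2n$.}
\begin{itemize}
\item Suppose the KAM region carrying the model has mass $m_k$ and contains $K$ clusters of $\epsilon_k$-separated ergodic measures of comparable weight. Then you only get $\cE_\Leb(f)(m_k\epsilon_k/2)\ge K/2$, so you need $|\log m_k|=o(|\log\epsilon_k|)$.
\item The region lies within distance $\rho_k$ of the elliptic orbit, so $m_k=O(\rho_k^{2n})$. Hence you need $|\log\rho_k|=o(|\log\epsilon_k|)$.
\item The map $y\mapsto h^{-1}_*(\Leb_{\mathbb T^n}\otimes\delta_y)$ is $\|Dh^{-1}\|$-Lipschitz. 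For its image to contain $\exp(\epsilon_k^{-2n+o(1)})$ separated measures you need $\|Dh^{-1}\|\ge\exp(c\,\epsilon_k^{-2n+o(1)})$.
\item KAM persistence of the tori of an \emph{approximation} of the model can then only be guaranteed for an approximation error of order $\exp(-c'\epsilon_k^{-2n+o(1)})$.
\item Nothing in your scheme reaches that precision while keeping $\rho_k$ large. With a normal-form remainder $O(\rho^{s})$ and $s$ fixed, it forces $|\log\rho_k|\ge c''\epsilon_k^{-2n+o(1)}/s$, which is far too large.
\end{itemize}
Separately, if ``product-type'' means $h=h_1\times\dots\times h_n$, the ergodic measures are products. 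Their $\epsilon$-covering number is only $\exp(O(\epsilon^{-2}))$, which yields order $2$, not $2n$.

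This is why the argument sketched for \cref{thmBB2} goes through periodic spots. There an iterate is \emph{exactly} the identity on a region whose size is fixed before $\epsilon$ is chosen. The map $h^{-1}\circ R_\theta\circ h$ is then implanted by a perturbation that is $C^\infty$-small because $\theta$ is small, while the ergodic decomposition depends only on $h$. So there is no approximation error for KAM to beat. A proof of the Kolmogorov-typical statement needs a parameter-uniform substitute for this mechanism, and your proposal assumes it rather than providing it.
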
 

\subsection{Analytic pseudo-rotations} 
All the above examples are smooth and regard metric emergence.  The following is the first example of an \emph{analytic} symplectomorphism with high emergence. Moreover, it is the first example of dynamics with high local emergence.
It was found recently by disproving a conjecture of Birkhoff~\cite{Bi41}. It shows that even in the very constrained class of analytic pseudo-rotations of the cylinder (symplectomorphism without periodic point),  maximal local emergence might occur. 
\begin{otherthm}[\cite{Be22}]\label{thmBe22}
There exists an analytic symplectomorphism $f$ of the cylinder $\mathbb A= \mathbb T^1\times [0,1]$, without periodic point, and  whose order of local emergence is maximal:
 \[ \overline \Ord\, \cE_\Leb^{loc} (f) =2\, .\] 
\end{otherthm}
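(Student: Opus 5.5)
The plan is to construct $f$ by the Anosov--Katok approximation-by-conjugation method, in the real-analytic category, following the strategy used to disprove Birkhoff's conjecture. We take $f=\lim_n f_n$ with
\[
f_n = h_n^{-1}\circ R_{\alpha_n}\circ h_n,\qquad h_n=g_1\circ\cdots\circ g_n,\qquad R_{\alpha}(\theta,y)=(\theta+\alpha,y),
\]
where each $g_k$ is an entire (or at least complex-analytic on a fixed strip) symplectomorphism of $\mathbb A$ commuting with $R_{\alpha_{k-1}}$, $\alpha_k\in\mathbb Q$, and $\alpha=\lim\alpha_k$ is Liouville. Since $f$ is then a limit of maps conjugate to rotations, with rotation number $\alpha$ irrational on each invariant curve, it has no periodic point once the convergence is controlled. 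The analytic conjugacies $g_k$ will be produced by the approximation method of the earlier paper: we approximate a smooth ``shearing'' symplectomorphism by entire ones, via Hamiltonians of the form $\sum a_j\cos(2\pi q j\theta)\,y^m$ type.

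The heart of the argument is the design of $g_{n}$ so that, at stage $n$, the invariant circles $h_n^{-1}(\mathbb T\times\{y\})$ carry Lebesgue-type measures $\nu_{n,y}$ that become spread across $\mathbb A$ in a way mimicking a ``maximally dispersed'' family. Concretely, we fix a scale $\epsilon_n\to0$ and partition $[0,1]$ into $M_n\approx \exp(\epsilon_n^{-2})$ intervals. We choose $g_n$ so that the measures on circles in distinct intervals lie in distinct elements of an $\epsilon_n$-separated family in $\cM(\mathbb A)$ of cardinality $\exp(c\epsilon_n^{-2})$; such a family exists by \cref{Bound topo}, since $\underline\Dim_B\mathbb A=2$. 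Within each interval, the measures vary by less than $\epsilon_n^{4}$, say. Then the ergodic decomposition $\se_*\Leb$ at stage $n$ gives mass $\approx \exp(-\epsilon_n^{-2})$ to each ball of radius $\epsilon_n$ around these measures.

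Next we pass to the limit. We choose $\alpha_{n+1}$ so close to $\alpha_n$ that $f$ is analytically close to $f_n$, and the ergodic components of $f$ (invariant curves of $h^{-1}$, $h=\lim h_n$) are close to those of $f_n$ on the relevant scale. This yields, for $\se_*\Leb$-a.e.\ $\nu$ and infinitely many $n$,
\[
\se_*\Leb\bigl(B(\nu,\epsilon_n)\bigr)\le \exp(-c'\epsilon_n^{-2}),
\]
so that $\overline\Ord_{loc}\,\se_*\Leb(\nu)\ge 2$ almost everywhere. The upper bound $\le2$ follows from Helfter's theorem together with \cref{Bound topo} and \cref{comparision metric top}, giving order exactly $2$. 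Convergence of $h_n$ is not needed: the invariant foliation of $f$ is obtained as a limit of the foliations of the $f_n$, with the stages spaced sufficiently far apart.

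The main obstacle is the analytic realization of the stage-$n$ conjugacy. We need an entire symplectomorphism that commutes with $R_{\alpha_n}$ and sends a prescribed family of about $\exp(\epsilon_n^{-2})$ horizontal circles to curves that equidistribute to prescribed, widely separated measures. At the same time its size on a fixed complex strip must be controlled, so that $\alpha_{n+1}$ can be chosen to keep $f$ analytic on that strip. My first attempt will be to build a smooth $q_n$-periodic-in-$\theta$ model, using flows of Hamiltonians supported in the $\theta$-direction fibered over intervals in $y$, and then approximate it in $C^0$ by entire Hamiltonian flows (Runge-type approximation of the generating function). The errors need only be small compared with $\epsilon_n$, which is sufficient because the Wasserstein distance is $C^0$-stable. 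The Liouville choice of $\alpha_{n+1}-\alpha_n$ then absorbs the possibly enormous analytic norms of $g_{n+1}$.
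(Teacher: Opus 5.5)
Your overall route is the one the paper attributes to \cite{Be22}: an Anosov--Katok scheme in the analytic category, with the upper bound $\overline{\mathrm{Ord}}\,\cE^{loc}_{\Leb}(f)\le \overline{\mathrm{Ord}}\,\cE_{\Leb}(f)\le\overline{\mathrm{Ord}}\,\cE_{top}(f)\le \overline{\mathrm{Dim}}_B\,\mathbb A=2$ coming from Helfter's theorem, \cref{comparision metric top} and \cref{Bound topo}. However, the step that carries the lower bound fails as you justify it.

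Taking $\alpha_{n+1}$ close to $\alpha_n$ makes $f$ close to $f_n$ as a map. It gives no control on the ergodic decomposition, which is not continuous in the map (the $f_n$ are even $q_n$-periodic). The invariant curves of $f_{n+1}$ are determined by $g_{n+1}$, which is chosen \emph{before} $\alpha_{n+1}$. How far the circle measures move from stage $n$ to stage $n+1$ is governed by $\|h_{n+1}^{-1}-h_n^{-1}\|_{C^0}$, or by exact invariance of stage-$n$ annuli under all later conjugacies, not by $|\alpha_{n+1}-\alpha_n|$.

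Your stage-$(n+1)$ prescription sends circles to an arbitrary $\epsilon_{n+1}$-separated family of cardinality $\exp(c\epsilon_{n+1}^{-2})$ in $\cM(\mathbb A)$. That moves circle measures by amounts of order one and destroys the scale-$\epsilon_n$ small-ball estimate. Yet the local order is computed for the single limit measure $\se^f_*\Leb$, so all these estimates must survive simultaneously. (Also, with $f_n=h_n^{-1}R_{\alpha_n}h_n$ and $g_{n+1}$ commuting with $R_{\alpha_n}$, you need $h_{n+1}=g_{n+1}\circ h_n$, not $h_n\circ g_{n+1}$, for $f_{n+1}\to f_n$ as $\alpha_{n+1}\to\alpha_n$.)

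What is missing is exactly the ``overlapping of the patterns of \cref{thmBB1} at multiple scales''. Suppose the later conjugacies keep each final measure within $\ll\epsilon_n$ of its stage-$n$ counterpart. Then the stage-$(n+1)$ family is far from free. The measure $(g_{n+1}^{-1})_*\Leb_{\mathbb T\times\{y\}}$ is $R_{1/q_n}$-invariant, hence $1/q_n$-close to $\Leb_{\mathbb T}\otimes\lambda_y$, where $\lambda_y$ is its $y$-marginal. Therefore
\[
\nu_{n+1,y}=\int \nu_{n,y'}\,d\lambda_y(y')+O\bigl(\mathrm{Lip}(h_n^{-1})/q_n\bigr),
\]
and $q_n\gg \mathrm{Lip}(h_n^{-1})$ is needed anyway for orbit segments of length $q_n$ to equidistribute along the stage-$n$ curves.

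Keeping the scale-$\epsilon_n$ separation forces $\lambda_y$ to charge essentially only those $y'$ whose stage-$n$ measures lie in one cluster of diameter $\ll\epsilon_n$. In your design this is essentially one interval $I$, on which $y'\mapsto\nu_{n,y'}$ is a Lipschitz arc of diameter less than $\epsilon_n^4$. So the $\exp(\epsilon_{n+1}^{-2+o(1)})$ separated measures needed at scale $\epsilon_{n+1}$ must be found in the convex hull of that tiny arc. This is possible only if $h_n$ was built so that the arc already oscillates in exponentially many directions at the finer scale. For instance, each $\nu_{n,y'}$ could carry a small fraction $\eta_n$ of its mass on $y'$-dependent spots of a fine grid; this costs only an $\eta_n$-dependent factor in the count, which is harmless for the order since $\epsilon_{n+1}$ is chosen after $\eta_n$. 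This nested design, together with a summability bound such as $\sum_{m\ge n}\|h_{m+1}^{-1}-h_m^{-1}\|_{C^0}\ll\epsilon_n$, is the real content of the proof, and it is absent from your plan.

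\cref{section 3} records that this example is topologically conjugate to a rotation, which indicates that the actual construction controls the conjugacies in $C^0$. Your claim that convergence of $h_n$ is not needed sits badly next to your appeal to $h=\lim h_n$. Without that convergence, your no-periodic-point argument presupposes invariant curves with rotation number $\alpha$ and so does not stand. It must be replaced by the standard one: for $1\le k<q_n$, $f_n^k$ displaces every point by some $\delta_n>0$, and the later $\alpha_m$ are chosen so that $f^k$ stays $\delta_n/2$-close to $f_n^k$ for $k<q_n$.
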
 
This theorem was proved by overlapping the patterns  of \cref{thmBB1} at multiple scales, using the approximation by conjugacy method of Anosov-Katok. A main issue was to implement this method in the analytic setting. I proposed an AbC principle to apply this method on the sphere and the disk in \cite{Be24}. This principle made it possible to solve a conjecture of Birkhoff on the instability of elliptic points \cite{Bi27} and has been applied by Delaporte to obtain:
 \begin{otherthm}[\cite{De25}] \label{thmDe22}
There are analytic symplectomorphisms $f$ of the sphere and of the disk, with two and one periodic points, respectively, whose order of local emergence is maximal:
 \[ \overline \Ord\, \cE_\Leb^{loc} (f) =2\, .\] 
\end{otherthm}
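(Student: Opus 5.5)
The approach is to adapt the analytic Anosov--Katok (AbC) construction of \cref{thmBe22} to the sphere and the disk. The analytic AbC principle of \cite{Be24} is what makes this possible: it replaces the cylinder's translation-invariant structure by rotations of $\mathbb S^2$ (resp.\ $\mathbb D$) fixing the poles (resp.\ the center). The map $f$ will be a limit
\[ f=\lim_{k\to\infty} h_k^{-1}\circ R_{\alpha_k}\circ h_k ,\]
where $R_\alpha$ is the rotation by angle $\alpha$ about the polar axis, $\alpha_k\in\mathbb Q$ converges fast to a Liouville number $\alpha$, and $h_k=g_1\circ\dots\circ g_k$. Each conjugacy $g_{k+1}$ is an entire symplectic map that commutes with $R_{\alpha_k}$. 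With this choice every $f_k$ has exactly the two poles (resp.\ the center) as periodic points, and so does the limit: the other points are non-periodic because $\alpha$ is irrational and the limit rotation is conjugated away from each fixed level.

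Step 1 is to build $g_{k+1}$. We take an entire symplectic map which, in action--angle coordinates $(\theta,y)$ away from the fixed points, is close to the ``pattern'' map $h$ of \cref{thmBB1} at scale $\epsilon_k$. This map should send the invariant circles $\{y\}\times\mathbb T$ of a $q_k$-periodic rotation to curves whose equidistributed measures form an $\epsilon_k$-separated family. The family should be indexed by a set of size about $\exp(\epsilon_k^{-2+\delta_k})$ in $\cM(\mathbb S^2)$, arranged so that the empirical law $\se_*\Leb$ is nearly uniform over it. To get this we use the AbC principle of \cite{Be24}: it provides entire maps commuting with $R_{1/q}$ that approximate a prescribed smooth symplectic diffeomorphism on a compact set avoiding the poles, with error small in a complex strip of controlled width. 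Near the fixed points we keep $g_{k+1}$ close to the identity, so it extends over the poles and the disk's boundary.

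Step 2 is the convergence and the measure estimates. We choose $\alpha_{k+1}-\alpha_k$ super-exponentially small relative to the analytic norms of $h_{k+1}$ on a fixed complex neighbourhood. This gives convergence of $f_k$ in the analytic topology on a uniform strip. It also makes the empirical measures of $f$ over times up to $q_{k+1}$ stay $\epsilon_k/10$-close to those of $f_k$, which equidistribute along the $h_k$-images of the circles. By Birkhoff's theorem $f$ is empirical, and $\se^f_*\Leb$ is the weak limit of the laws at stage $k$. At scale $\epsilon_k$, a ball of radius $\epsilon_k/3$ around a typical $\se^f(x)$ has $\se_*\Leb$-mass at most about $\exp(-\epsilon_k^{-2+\delta_k})$. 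We must also ensure that later stages do not concentrate the law at coarser scales, since they only refine it inside the earlier patterns. This gives $\overline{\Ord}_{loc}\,\se_*\Leb\ge 2$ almost everywhere, hence $\overline\Ord\,\cE^{loc}_\Leb(f)\ge2$. The upper bound $2$ follows from Helfter's theorem together with \cref{Bound topo} and \cref{comparision metric top}.

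The main obstacle is Step 1 near the singular orbits. An action--angle chart degenerates at the poles and at the disk's boundary, and entire maps commuting with the rotation cannot be cut off. We would first try the sphere version of the AbC principle of \cite{Be24}, which gives an approximation that is uniform on compacta minus a small polar cap. We would then check that the cap's measure is negligible at each scale $\epsilon_k$. The delicate point is the competition between the cap size and the analytic strip width, which decay at different rates, against the required speed of $\alpha_k\to\alpha$.
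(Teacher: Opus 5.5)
Your overall route is the one the paper attributes to \cite{De25}: an analytic AbC construction on $\mathbb S^2$ and on the disk through the principle of \cite{Be24}, with the patterns of \cref{thmBB1} overlapped at scales $\epsilon_k\to0$, and the upper bound $2$ from Helfter's theorem, \cref{comparision metric top} and \cref{Bound topo}. The genuine gap is the count of periodic points, which is part of the statement.

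Your claim that every $f_k$ has only the poles (resp.\ the centre) as periodic points is false. Since $f_k=h_k^{-1}\circ R_{\alpha_k}\circ h_k$ is conjugate to a \emph{rational} rotation, $f_k^{q_k}=\mathrm{id}$ and every point is periodic. Nor can you appeal to $f$ being conjugate to $R_\alpha$. That requires $h_k$ and $h_k^{-1}$ to converge uniformly to a homeomorphism, which you never impose. It is not absurd (the paper notes that the cylinder example of \cref{thmBe22} is topologically conjugate to a rotation), but it is an extra constraint on every $g_{k+1}$.

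The standard fix is a displacement estimate. Let $\rho$ be the distance to the poles (resp.\ the centre). For $0<n<q_k$ one has $\|n\alpha_k\|\ge 1/q_k$, so $R_{n\alpha_k}$ moves each $y$ by at least $c\,\rho(y)/q_k$. Hence $f_k^n$ moves each $x$ with $\rho(h_k(x))\ge r_k$ by at least $c\,r_k/(q_k\,\mathrm{Lip}\,h_k)$. Rotations are isometries, so every $f_k^j$ is $\mathrm{Lip}(h_k)\,\mathrm{Lip}(h_k^{-1})$-Lipschitz. Therefore $\sup_{n<q_k}\|f^n-f_k^n\|_{C^0}\le q_k\,\mathrm{Lip}(h_k)\,\mathrm{Lip}(h_k^{-1})\,\|f-f_k\|_{C^0}$, and the later choices of $|\alpha_{j+1}-\alpha_j|$ make this smaller than the displacement. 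Choose $r_k$ with $h_k^{-1}(\{\rho<r_k\})\subset\{\rho<1/k\}$, which is possible since $h_k$ fixes the poles. Then no point with $\rho\ge 1/k$ has period $<q_k$, and letting $k\to\infty$ leaves only the poles (resp.\ the centre).

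Two further points need repair.

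\textbf{Order of composition.} With $f_k=h_k^{-1}R_{\alpha_k}h_k$ and $g_{k+1}$ commuting with $R_{\alpha_k}$, you need $h_{k+1}=g_{k+1}\circ h_k$ (i.e.\ $h_k=g_k\circ\dots\circ g_1$), so that $h_{k+1}^{-1}R_{\alpha_k}h_{k+1}=f_k$. With your ordering, $f_{k+1}$ is not a small perturbation of $f_k$.

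\textbf{Passage to the limit of empirical measures.} Closeness of $\se^f_n$ to the stage-$k$ measures for $n\le q_{k+1}$ says nothing about $\se^f=\lim_n\se^f_n$. So ``$\se^f_*\Leb$ is the weak limit of the stage-$k$ laws'' does not follow from Step 2. You must impose a design constraint on each $g_{k+1}$: for most $x$, the stage-$(k+1)$ measure $(h_{k+1}^{-1})_*\Leb_{C}$ lies within $\eta_k$ of the stage-$k$ one, with $\sum_{j\ge k}\eta_j<\epsilon_k/10$. This nesting is precisely what ``overlapping the patterns at multiple scales'' must deliver. It is also what lets your small-ball bound at scale $\epsilon_k$ survive in the limit.
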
 

Finally, let us mention the work of Carvalho-Rodrigues-Varandas \cite{CRV24} which showed that a topologically generic homeomorphism of a manifold $M$ of dimension $\ge 2$ has topological emergence of maximal order $\Dim M$.

 \subsection{Periodic flows}
A flow is \emph{periodic} if all its orbits are circles.
Examples with unbounded periods were constructed on compact
manifolds of dimensions $5$ and $4$ in \cite{Su76,EV78},
respectively. Such dynamics are far from being ergodic.

In \cite[Chap.~2]{Co25}, we showed that Thurston's and
Epstein--Vogt's examples
have low (polynomial) emergence, using a general criterion.
On the other hand, Costa~\cite{Co26} constructed a variation
of the Epstein--Vogt example~\cite{EV78}, whose nonzero time
maps have upper emergence order at least $2$, using
the high-emergence mechanism of \cite{BB21}.

 \subsection{Examples of differentiable dynamics with high emergence} 
Among dissipative systems, for now, there are three kinds of examples. 

 \subsubsection{Persistent attracting circles} The first kind is the dissipative counterpart of \cref{thmBB2} where the persistence of KAM tori is replaced by persistent attracting circles, and the existence of periodic spots is provided by Turaev's Theorem \cite{Tu15} on the existence of periodic spots within the total Newhouse domain\footnote{Dynamics displaying a horseshoe with a persistent homoclinic tangency, and containing two periodic points which are resp. area contracting and expanding. } $\cN$ :
 
 \begin{otherthm}[\cite{BB21}]\label{thmBB3} 
 Let $(S, \Leb)$ be a compact surface endowed with a volume form. Then there is a nonempty open subset $\cN\subset \Diff^\infty (S)$ such that a generic map $f\in \cN$ has maximal emergence order:
 \[ \overline {\mathrm{Ord}}\, \cE_\Leb(f) =2\, .\] 
 \end{otherthm}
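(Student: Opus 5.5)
The plan is to follow the scheme of \cref{thmBB2}, with KAM circles replaced by normally attracting invariant circles and the periodic spots of \cite{GST} replaced by Turaev's periodic spots \cite{Tu15}. The upper bound $\overline{\Ord}\,\cE_\Leb(f)\le 2$ is automatic. Since $\cE_\Leb\le\cE'_\Leb\le\cE'_{top}$ and $\overline\Ord\,\cE'_{top}\le\overline\Dim_B S=2$ by \cref{Bound topo}, only the lower bound $\overline\Ord\,\cE_\Leb(f)\ge 2$ on a residual subset of $\cN$ needs proof.

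We take $\cN$ to be the total Newhouse domain described in the footnote. The residual set will be $\bigcap_{m}\cU_m$, where $\cU_m$ is the set of $f\in\cN$ for which there exist $\epsilon<1/m$ with
\[\cE_\Leb(f)(\epsilon)\ge \exp\bigl(\epsilon^{-2+1/m}\bigr).\]
For this to give a $G_\delta$, we need $\cU_m$ to contain an open dense subset. To that end we will produce a robust lower bound. The lower bound on emergence will come from a positive-Lebesgue-measure set of points whose empirical limits form a set of measures that is ``$2$-dimensional in order''. Robustness will come from the persistence of normally hyperbolic attracting circles.

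The proof has three steps.
\begin{enumerate}
\item \textbf{Density of periodic spots.} Given $g\in\cN$, by \cite{Tu15} we perturb $g$ to $g_1\in\cN$ having a periodic spot $D$ of period $p$ with $g_1^p|_D=\mathrm{id}$.
\item \textbf{Local modification.} In a chart on $D\simeq\mathbb T\times(0,1)$ we replace $g_1^p$ by a $C^\infty$-small perturbation of the map $h^{-1}\circ(\text{dissipative } R_{\theta(y)})\circ h$ from \cref{thmBB1}. This is done at a finite scale: we make finitely many circles $h^{-1}(\mathbb T\times\{y_k\})$ normally attracting. On these circles the dynamics is an irrational rotation conjugated by $h$, so each circle carries a unique invariant measure $\nu_k$. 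The $\nu_k$ are pairwise $\epsilon$-separated, with $\log\#\{k\}\gtrsim\epsilon^{-2+1/m}$. Their basins have positive Lebesgue measure, and together they fill a definite fraction of $D$. Making the rotation numbers Diophantine and the circles normally hyperbolic keeps them, with their uniquely ergodic measures, under $C^\infty$-small perturbations; the measures also stay nearly unchanged in $\mathsf d$.
\item \textbf{Counting.} Using \cref{quantization2emergence_gen}, or directly the definition, a positive measure of points have empirical measures converging close to $\nu_k$ for each $k$. The $\nu_k$ sit in the image under the isometric embedding of a $2$-dimensional family, as in \cite{BB21}, which rests on \cref{Bound topo}. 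Hence approximating them by $N$ measures with error less than $\epsilon$ forces $N\ge\exp(\epsilon^{-2+1/m})$. This holds on an open neighbourhood of the perturbed map, so $\cU_m$ contains an open dense set.
\end{enumerate}

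The main obstacle is step 2, for two reasons. First, the number of circles grows super-exponentially, so their basins are extremely thin and their total Lebesgue weight must not collapse. Second, the perturbation must remain small in the full $C^\infty$ topology and stay inside $\cN$. The first thing I would try is to work at each fixed scale $\epsilon$ with only finitely many circles, which is exactly what genericity with $\overline\Ord$ permits. I would weight the basins by the flatness profile $\theta$ as in \cite{BB21}, and check that the quantization lower bound only needs each basin mass to be at least $\exp(-\epsilon^{-2+1/m})$-comparable. This estimate is where I expect the real work to lie.
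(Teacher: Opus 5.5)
Your overall route is the paper's: Turaev's periodic spots in the total Newhouse domain, finitely many persistent attracting circles at a fixed scale, then Baire, with the upper bound from \cref{Bound topo}. However, the robustness step is wrong as stated, though it is easily repaired.

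Write $g_2$ for the modified map, with $g_2^p=h^{-1}\circ F\circ h$ on the spot, $F(x,y)=(x+\theta(y),\psi(y))$, attracting levels $y_k$ and repelling levels $z_k$. In the dissipative setting there is no KAM. A $C^\infty$-small perturbation can change the rotation number of the map induced on the continuation of $C_k=h^{-1}(\mathbb T\times\{y_k\})$, e.g.\ to a rational one with hyperbolic periodic orbits (Morse--Smale circle maps are open and dense). So neither a Diophantine rotation number nor unique ergodicity survives, and empirical measures of nearby maps $f'$ need not converge.

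What survives, and is all you need, is the following.
\begin{enumerate}
\item The trapping annuli $h^{-1}(\mathbb T\times[z_{k-1}+\tau,z_k-\tau])$ persist, so the basins $B_k$ keep almost the same Lebesgue mass.
\item Invariant measures are upper semicontinuous. For $g_2$, the orbit of $C_k$ carries a single invariant measure $\bar\nu_k=\frac1p\sum_{j<p}g^j_{2*}\nu_k$; irrationality of $\theta(y_k)$ suffices for this. Hence for $f'$ close to $g_2$, every $f'$-invariant measure carried by the $f'$-orbit of the attractor of the $k$-th trapping annulus is $\delta$-close to $\bar\nu_k$, uniformly in the finitely many $k$.
\end{enumerate}
So all accumulation points of $\se^{f'}_n(x)$, $x\in B_k$, lie within $\delta$ of $\bar\nu_k$, and Fatou's lemma gives
\[
\limsup_{n\to\infty}\int\min_{i\le N}\mathsf d(\se^{f'}_n(x),\mu_i)\,d\Leb(x)\ \ge\ \sum_k\Leb(B_k)\bigl(\mathsf d(\bar\nu_k,\{\mu_i\}_{i\le N})-\delta\bigr),
\]
which is the only input step 3 needs. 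Note also that the empirical measures of $f'$ see $\bar\nu_k$, not the $g_2^p$-invariant $\nu_k$. After shrinking the spot so that its other iterates are at distance at least its diameter from it, the separation drops only by the factor $1/p$, which is harmless for the order.

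Second, the obstacle you single out is not where the work lies. At a fixed scale, $h$ is an honest smooth diffeomorphism (with huge derivatives). Then $h^{-1}\circ F\circ h$ is as $C^r$-close to the identity as you like once $F-\mathrm{id}$ is small enough in terms of $h$. No flatness of $\theta$ is needed; in \cref{thmBB1} flatness only serves to make the infinite-scale construction smooth at $y=0$. The basins are the annuli between consecutive repelling circles, and since you design $h$ you can make them of equal area, together filling a fixed fraction of the spot.

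The count is then a pigeonhole argument. Suppose the $\bar\nu_k$ are pairwise $2\epsilon'$-separated, the $K$ basins have mass $m/K$ each, and $\delta\le\epsilon'/4$. Then any $N\le K/2$ measures leave mass at least $m/2$ at distance at least $\epsilon'$, so $\cE_\Leb(f')(m\epsilon'/4)\ge K/2$.

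The genuine content is the design of $h$: a foliation of an annulus in the spot by essential circles whose parametrized Lebesgue measures realize, at $K$ levels, an $\epsilon'$-separated family with $\log K\gtrsim\epsilon'^{-2+\eta}$. One way is to take leaves running parallel to a fixed loop that passes near every point of an $\epsilon'$-grid, with the separation coming from $y$-dependent densities along the leaves. Your appeal to ``an isometric embedding of a $2$-dimensional family'' does not supply this. \cref{Bound topo} only says such separated families of measures exist, not that circles of a smooth foliation can carry them.
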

 \subsubsection{Wandering stable components}
A second kind of examples goes back to a phenomenon discovered by Colli-Vargas \cite{CV01} where they found a $C^r$-diffeomorphism which displays a \emph{stable} domain $U$:
\[ 
d(f^n(x), f^n(y))\to 0 \quad \forall x,y\in U
\]
 which moreover accumulates onto an affine horseshoe. In \cite{KS17}, Kiriki and Soma showed that this phenomenon appears $C^r$-densely in the dissipative Newhouse domain $\cN^r$ of $C^r$-surface diffeomorphisms, for every $2\le r<\infty$. In \cite{KNS25}, Kiriki-Nakano-Soma showed that the empirical measures of points in a domain accumulate on an infinite-dimensional set of measures. Hence, the emergence is high for a dense subset of the dissipative Newhouse domain, for $2\le r<\infty$. With Biebler we showed that this result is also valid for the infinitely smooth case $r=\infty$ and analytic $r=\omega$. Moreover, we showed that the emergence order is positive:
\begin{otherthm}[\cite{BB23}] For every $r\in [ 2,\infty] \cup\{\omega\}$, for every surface $M$ endowed with a volume form $\Leb$, there exists a dense subset $\cD$ of the dissipative Newhouse domain $\cN^r\subset \Diff^r(M)$ such that for every $f\in \cD$ there exists a domain $U\subset M$ satisfying:
\[ d(f^n(x), f^n(y))\to 0 \quad \forall x,y\in U\qand \mathrm{Ord}_B Acc (\se^f_n(x))_{n>0} >0\quad \forall x\in U\; .\] 
In particular, the emergence order of $f$ is positive.
\end{otherthm}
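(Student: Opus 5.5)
The construction will start from the Kiriki--Soma density result, rewritten so that it also covers $r=\infty$ and $r=\omega$. Fix $f_0\in\cN^r$ and a neighborhood $\cV$ of $f_0$. By the definition of the Newhouse domain, after a first perturbation in $\cV$ we may assume that $f_0$ has a wild horseshoe $\Lambda$ with a robust homoclinic tangency. Perturbing once more, we may assume that a saddle $p\in\Lambda$ is dissipative, $|\det Df(p)|<1$, and that there is a quadratic tangency between $W^u(p)$ and $W^s(\Lambda)$.

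The goal is to build a small rectangle $R$ whose images are governed by a sequence of symbols. Each symbol corresponds to a long excursion near the horseshoe, followed by a return through the tangency. Along this itinerary, $f^{n}(R)$ shrinks in every direction, which gives stability of the domain $U=\mathrm{int}\,R$. Following Colli--Vargas and Kiriki--Soma, the symbol sequence is chosen to be ``fast-growing'': the $k$-th block is a word $w_k$ in the alphabet of $\Lambda$ repeated many times.

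The main step is to control the empirical measures of points of $U$. Since all points of $U$ are asymptotic, it suffices to study one orbit. I would let the blocks range through an enumeration of all finite words $w$ over a fixed subshift of finite type of $\Lambda$, with repetition counts $m_k$ that grow very fast. Near the end of each block, $\se^f_n(x)$ is then close to the periodic measure $\mu_w$ carried by the periodic orbit coded by $w$. Also, the time spent in transitions through the tangency has density going to zero. Consequently $\mathrm{Acc}(\se^f_n(x))_n$ contains the closure of $\{\mu_w\}$, and this closure contains the set of invariant measures of $f|\Lambda'$ for a sub-horseshoe $\Lambda'$ (the set of periodic measures is dense in it). Now \cref{Bound topo2} applies once we choose $\Lambda'$ to be a basic set of positive box dimension on which we have conformality or the conservative-surface structure. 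I would take an affine/conformal model: Kiriki--Soma-type horseshoes can be linearized in $C^r$ on a neighborhood. This gives $\ord_B \cM^{erg}_{f|\Lambda'}=\Dim_B\Lambda'>0$, and this lower bound passes to the set of accumulation points, since covering numbers are monotone under inclusion of sets.

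The final claim then follows. A positive-measure set of points (namely $U$) has positive-order accumulation sets, and a lower bound for $\Eme_\Leb$ follows by the argument of \cite{KNS25}: any $N$ measures within $\epsilon$ on average must approximate, for most $x\in U$, the common accumulation set uniformly along subsequences. I expect two obstacles. The first is the analytic case $r=\omega$: the local perturbations that create the tangency at each scale cannot be compactly supported. I would handle this by a parameter-family argument. One takes an analytic family unfolding the tangencies, noting that the infinitely many conditions form a nested sequence of nonempty open parameter intervals, with the transversality of the unfolding controlling each step. The second obstacle is that \cref{Bound topo2} needs a conformal or conservative structure. If linearization fails, I would instead embed a full shift on two symbols and directly estimate the covering number of the Bernoulli measures by $\exp(c\,\epsilon^{-\delta})$.
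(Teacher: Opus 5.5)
\textbf{Preliminary remark.} The paper does not prove this theorem. It quotes it from \cite{BB23} and says that the mechanism works in any nondegenerate $5$-parameter unfolding in the dissipative Newhouse domain.

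\textbf{Main gap: the cases $r=\infty$ and $r=\omega$.} Measured against that source, your proposal has a genuine gap exactly where the statement is new. Your second paragraph takes for granted a stable rectangle whose orbit follows a prescribed itinerary. For finite $r$ this is the Colli--Vargas/Kiriki--Soma machinery, which uses infinitely many localized perturbations and an affine horseshoe. You do not say how it is ``rewritten'' for $r=\infty$.

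Linearization is not available. Multipliers of periodic orbits are $C^1$-conjugacy invariants, so a generic horseshoe is not linearizable, let alone uniformly along a parameter family.

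Your remedy (nested parameter intervals in a one-parameter analytic family) only relocates the difficulty. At step $k$, the window of parameters for which the first $k$ returns work is extremely small; it shrinks with the rectangles $R_k$. You must show that inside this window the family still realizes the $(k+1)$-th return. That requires controlling, as functions of the parameter, both the position of the tangency and the small-scale geometry (moduli) of a non-affine horseshoe that itself moves with the parameter. Nothing in your sketch shows that one parameter can do this. ``Transversality of the unfolding controlling each step'' is precisely the statement to be proved. In \cite{BB23}, this is where the five parameters of a nondegenerate unfolding and a renormalization analysis of the horseshoe's moduli come in.

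\textbf{The combinatorial step fails as written.} Suppose each block is a single excursion between two tangency passages, as your description suggests. Then its length is not free. Let $B(x_0,r)\subset U$ and let $S$ be the time at which the excursion starts. Then $f^S(U)$ contains a ball of radius $r\,m^{S}$, where $m^{-1}=\max\|Df^{-1}\|$. Following the word for $T$ iterates, one Markov rectangle at a time, stretches unstable curves by at least $\lambda_u^T$, where $\lambda_u>1$ bounds the cone expansion near $\Lambda$. Such curves have bounded length inside a rectangle, so $T\le CS+C'$ with $C=\log(1/m)/\log\lambda_u$. At the end of the excursion, $\se^f_n(x)$ therefore still gives weight about $1/(1+C)$ or more to the past, and you cannot conclude that it is close to $\mu_{w_k}$. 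Each $w$-phase must consist of many consecutive excursions of admissible lengths. You then have to check that connecting words, positioning codes and tangency passages have vanishing density.

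\textbf{The order estimate.} \cref{Bound topo2} does not apply: a dissipative surface horseshoe is neither conformal expanding nor conservative. Your fallback also fails as stated, because Bernoulli measures on two symbols form a one-parameter curve whose covering numbers are polynomial in $1/\epsilon$.

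What works is to take all invariant (or periodic) measures of an embedded full shift and transfer the symbolic covering estimate through the H\"older conjugacy. Write $W_\rho$ for the Kantorovich--Wasserstein distance built on a metric $\rho$. The key inequality is $W_{d^\alpha}\le W_d^{\,\alpha}$, which follows from Jensen. This gives a positive order, which is all you need.

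\textbf{What is fine.} Your last step, passing from $\mathrm{Acc}(\se^f_n(x))$ to $\Eme_\Leb$ via $\Leb(U)>0$ and the asymptoticity of the points of $U$, is correct.
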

Actually, this phenomenon holds in   any ``nondegenerate'' unfolding in the dissipative Newhouse domain (with 5 parameters). In particular, it appears with the family of real Hénon maps of degree 6:
\[(z,w)\mapsto (z^6 +\sum_{0\le i\le 5} a_i \cdot z^i-b \cdot w, z)\]
This shows that high emergence occurs in the very constrained family of Hénon maps of degree 6. 
 
Interestingly, this stable domain has a complex extension which is also stable and so is included in the Fatou set. Its Fatou component turns out to be wandering, that is non-periodic. This gives:
\begin{otherthm}[\cite{BB23}]\label{BB23}There exists a locally dense subset of real Hénon maps of degree 6 formed by maps displaying a Fatou component $\tilde U\subset \mathbb C^2$, which intersects $\R^2$ and such that:
\[ d(f^n(x), f^n(y))\to 0 \quad \forall x,y\in \tilde U\qand \mathrm{Ord}_B Acc (\se^f_n(x))_{n>0} >0\; .\] 
\end{otherthm}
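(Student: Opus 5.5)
The plan is to obtain the theorem from the real statement of \cite{BB23} recalled just above, and then complexify that stable domain. First, the real result. The real Hénon family of degree 6,
\[(z,w)\mapsto \Bigl(z^6+\sum_{0\le i\le 5}a_i z^i-b\, w,\ z\Bigr),\]
gives a nondegenerate $5$-parameter unfolding in the dissipative Newhouse domain, since the six coefficients $a_0,\dots,a_5$ and $b$ give enough freedom. So the previous theorem (in its parametric version) supplies a locally dense set of parameters with three properties. There is a real domain $U\subset\R^2$ whose forward orbits accumulate on a horseshoe in the Colli--Vargas/Kiriki--Soma manner. On $U$ we have $d(f^n(x),f^n(y))\to0$. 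Finally, $\mathrm{Ord}_B\,Acc(\se^f_n(x))_{n>0}>0$ for $x\in U$; this positive order is established as in \cite{KNS25,BB23} by prescribing, through the combinatorics of the successive tangency renormalisations, itineraries whose time averages realise a set of measures of positive box order. The points to be proved are therefore only the complex extension and the wandering property.

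Next, the complex extension. In the construction, $U$ is a small disk around a point whose orbit passes close to the successive homoclinic tangencies. After each renormalisation, $f^{n_k}|_U$ is uniformly contracted into a disk of the next scale. I would redo these estimates for the holomorphic extension $F$ of $f$ to $\mathbb C^2$. Take $\tilde U$ to be a small complex bidisk around $U$, and show that $F^{n_k}(\tilde U)$ lies in a complex neighbourhood of $F^{n_k}(U)$ of comparable size. The renormalisation maps near the tangencies are close to complex quadratic Hénon-like maps. Their contraction on a fixed complex polydisk follows from the same distortion estimates, with Cauchy estimates used in place of real derivative bounds.

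From this I would get $\mathrm{diam}\,F^n(\tilde U)\to0$. Hence $\{F^n\}$ is normal on $\tilde U$, so $\tilde U$ lies in the Fatou set, and the first property of the statement extends from $U$ to $\tilde U$. The Fatou component containing $\tilde U$ meets $\R^2$. Because the diameters of its forward images shrink while the orbit accumulates on a horseshoe, which is non-trivial and hyperbolic, the component is wandering. Indeed, a periodic component with all orbits converging together would have to be an attracting basin, or would carry a limit map onto a periodic point, and this cannot happen for orbits that shadow the horseshoe for ever-longer stretches. The empirical-measure property on $\tilde U\cap\R^2\supset U$ is the real statement already obtained. The displayed statement for $x\in\tilde U$ then follows, since $d(F^n x,F^n y)\to0$ forces equal accumulation sets of empirical measures.

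I expect the main obstacle to be the complex step: the real construction only controls real distortion, and one must make sure the complex disks do not escape to infinity or spread at each near-tangency passage. I would first try to use that the return maps are $C^0$-close on a complex polydisk to the normalized family $(x,y)\mapsto(x^2+c,\,x)$ with $|c|$ small, which has an attracting fixed point of complex modulus uniformly bounded.
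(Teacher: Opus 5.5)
Your proposal follows essentially the route the paper sketches, and the survey itself defers all details to \cite{BB23}: take the real stable domain with positive-order statistics given by the nondegenerate-unfolding result in the degree-$6$ Hénon family, show that a complex neighbourhood of it is still stable so that it lies in the Fatou set, and transfer the statistics through $d(F^n x,F^n y)\to 0$; the complex stability estimate you only outline is exactly the hard part carried out in \cite{BB23}. One point to make explicit is that the statement concerns the whole Fatou component, not just your bidisk: orbits in the component are bounded, so every limit map of $(F^n)$ on it is holomorphic and constant on the bidisk, hence constant on the component by the identity principle, which gives $d(F^n x,F^n y)\to 0$ for all of its points.
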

This theorem actually provided the first example of a polynomial automorphism of $\mathbb C^2$ with a wandering Fatou component, a problem which goes back to \cite{Mi90,BS91}.

Let us mention that since then, other examples of wandering stable components have been found in \cite{Ba21, KNS23}, where the horseshoe with a persistent homoclinic tangency {\it à la} Newhouse is substituted by other mechanisms such as   horseshoes with a persistent homoclinic tangency {\it à la} Bonatti-Diaz.

 \subsubsection{Maximal oscillation}
 A third kind of examples goes back to the examples of Hofbauer and Keller \cite{HK90,HK95}. They proved the existence of an uncountable set of parameters $\lambda \in (0,1)$ such that the map $f: x\in (0,1) \mapsto 4\lambda x(1-x) $ has \emph{maximal oscillation}. This means that $\Leb$-a.e. point $x\in (0,1)$ has its sequence of empirical measures $(\se_n^f(x))_n$ which accumulates simultaneously \emph{every} invariant measure of $f$. We showed that such parameters are topologically generic in the real bifurcation locus of non-renormalizable maps, and that  such maps leave invariant hyperbolic compact subsets of Hausdorff dimension arbitrarily close to $1$. Then from \cref{Bound topo2}, we deduced:
\begin{otherthm}[\cite{BG25}]
There exists a subset $\mathrm{Bif}_{NR} \subset (0,1)$ of positive Lebesgue measure such that for a topologically generic $\lambda\in \mathrm{Bif}_{NR}$, the map $f: x\in (0,1) \mapsto 4\lambda x(1-x) $ has maximal order of emergence and maximal oscillation.
\end{otherthm}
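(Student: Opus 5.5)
The plan is to run two independent ingredients in parallel and then combine them through \cref{Bound topo2}. The first is a topological-genericity statement for maximal oscillation. The second is an abundance statement for invariant hyperbolic sets of large dimension.

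\textbf{Step 1: the parameter set and maximal oscillation.} I define $\mathrm{Bif}_{NR}$ as the set of parameters $\lambda$ in the real bifurcation locus for which $f_\lambda$ is non-renormalizable, i.e. not infinitely renormalizable. Its positivity of Lebesgue measure should follow from known results on real quadratic maps. By Lyubich, a.e. parameter is regular or stochastic, and the stochastic (Jakobson) parameters, which are non-renormalizable after restricting to a renormalization window, have positive measure. To get maximal oscillation generically, I follow the Hofbauer--Keller mechanism. Fix a countable dense set of invariant measures, approximated by measures supported on periodic orbits, or more robustly on hyperbolic horseshoes. For each such target $\nu_k$ and each $\epsilon=1/m$ and time $N$, consider the set $O_{k,m,N}$ of parameters for which there exists $n\ge N$ such that $\mathsf d(\se^f_n(x),\nu_k)<\epsilon$ holds for $x$ in a set of Lebesgue measure $>1-\epsilon$.

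The main obstacle is showing that $O_{k,m,N}$ is open and dense in $\mathrm{Bif}_{NR}$. Openness is clear, since a finite-time condition is continuous in $\lambda$. For density, given $\lambda$, I would perturb it so that the critical value falls into the basin of an attracting periodic orbit, which may or may not lie in the bifurcation locus. Alternatively, I would perturb it so that the critical orbit follows the hyperbolic set carrying $\nu_k$ for a long time before escaping. Because almost every point eventually shadows the critical orbit near its turning point (via bounded distortion for non-renormalizable maps), most points then spend a long stretch near $\nu_k$. This makes their empirical measures close to $\nu_k$ at some time $n$. Handling the invariance of the target measures under perturbation, and the requirement that ``every'' invariant measure is reached, is exactly where continuity of hyperbolic continuations is needed. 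The Baire intersection $\bigcap O_{k,m,N}$ then gives a generic set on which Lebesgue-a.e. $x$ accumulates every invariant measure.

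\textbf{Step 2: large hyperbolic sets.} Generically in $\mathrm{Bif}_{NR}$, I show that $f_\lambda$ has basic hyperbolic sets $K$ with $\dim K>1-\delta$ for each $\delta$. This is again an open and dense condition, since horseshoes persist. Density comes from perturbing so that the critical point is preperiodic (Misiurewicz). In that case the map is expanding away from the critical point, and hyperbolic subsets avoiding small neighbourhoods of the critical orbit have dimension tending to $1$. On the interval, $f|K$ is conformal expanding.

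\textbf{Step 3: conclusion.} For such $\lambda$, maximal oscillation means that the accumulation set of $(\se^f_n(x))_n$ is all of $\cM_f$ for a.e. $x$. Then \eqref{emergence def} forces the $N$ measures to $\epsilon$-cover, on average in time, every ergodic measure, including those supported on $K$. By \cref{Bound topo2} there are about $\exp(\epsilon^{-\dim K})$ of these needed. More precisely, take $\exp(\epsilon^{-d})$ ergodic measures on $K$ that are $3\epsilon$-separated. Since a.e. orbit visits each of them along subsequences, a covering argument on the time-averaged quantity shows $\cE_\Leb(f)(\epsilon)\gtrsim N^B_{\cM(K)}(3\epsilon)$. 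This gives $\overline\Ord\,\cE_\Leb\ge 1-\delta$ for all $\delta$, hence it equals $1=\Dim[0,1]$. The delicate point is exactly this comparison: the limsup of the integral is not the integral of the limsup. I would handle it by genericity, arranging in Step 1 times $n$ where most mass is simultaneously near a single prescribed measure, and choosing the times so that they cycle through the separated family.
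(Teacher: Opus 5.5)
Your overall architecture matches the paper's. The paper imports from [BG25] the genericity of maximal oscillation on $\mathrm{Bif}_{NR}$ and the existence of hyperbolic sets of dimension close to $1$, then applies \cref{Bound topo2}. You are also right that plain maximal oscillation only controls $\cE'_\Leb$, so the synchronised condition built into $O_{k,m,N}$ is exactly what Step~3 needs, and your Misiurewicz route for Step~2 is acceptable.

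The genuine gap is the density of $O_{k,m,N}$ in $\mathrm{Bif}_{NR}$, which is the real content of the Hofbauer--Keller-type result. Your second mechanism fails. If the critical orbit merely shadows a repelling hyperbolic set for $L$ steps and then escapes, only points coming within roughly $e^{-cL}$ of the critical point follow that excursion. Typical points get there at scattered and much later times, and $\se_n$ is a Ces\`aro average over the whole past. So there is no common $n$ at which most of the Lebesgue mass has $\se_n$ near $\nu_k$. Synchronisation requires a trap that captures most points quickly and keeps them for much longer than the capture time, i.e.\ a (nearly) non-repelling cycle distributed close to $\nu_k$. Your first mechanism provides such a cycle. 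However, parameters with an attracting cycle lie in the interior of hyperbolic components, hence outside the bifurcation locus, so they never give points of $O_{k,m,N}\cap\mathrm{Bif}_{NR}$. The missing step is to realise the finite-time condition at a non-hyperbolic parameter and then use openness. For instance, choose by kneading theory a window near $\lambda$ whose superattracting cycle has distribution close to $\nu_k$. At the parabolic endpoint of that window, Lebesgue-a.e.\ point converges to the neutral cycle, so the open condition holds on an interval around that endpoint. You must then check that this interval meets non-renormalizable bifurcation parameters on the side where the cycle has disappeared (intermittency near the ghost cycle).

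Second, your definition of $\mathrm{Bif}_{NR}$ conflates \emph{non-renormalizable} with \emph{not infinitely renormalizable}, and for the latter set the statement is false. Suppose $f_\lambda$ is renormalizable with primitive period $p\ge 3$. The points whose orbits never enter the interiors of the cycle of renormalization intervals form a compact forward-invariant set avoiding the critical point. By Ma\~n\'e this set is hyperbolic, hence Lebesgue-null. So a.e.\ orbit is eventually trapped in that cycle, which misses the fixed point $\alpha=1-1/(4\lambda)$, and $\delta_\alpha$ is never accumulated. Parameters in the interior of such a window form a nonempty relatively open subset of your set, so maximal oscillation cannot be generic there. You must work with genuinely non-renormalizable bifurcation parameters. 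Positivity of Lebesgue measure must then be proved for that smaller set (e.g.\ from Jakobson/Benedicks--Carleson parameters near $\lambda=1$), not by restricting to a renormalization window, which produces renormalizable maps. Finally, concluding $\overline{\mathrm{Ord}}\,\cE_\Leb(f)=1$ also requires the upper bound $\cE_\Leb(f)(\epsilon)\le N^B_{\cM(X)}(\epsilon)$ combined with \cref{Bound topo}.
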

This result was inspired by Talebi's complex counterpart to Hofbauer-Keller's theorem. It deals with the bifurcation locus $\mathrm{Bif}_{d}$ of rational functions of any degree $d\ge 2$. This set can be defined as the closure of strictly post-critically finite rational functions of degree $d$. 
\begin{otherthm}[\cite{Ta22}]
For every $d\ge 2$, a topologically generic $f\in \mathrm{Bif}_{d}$ has maximal oscillation.
\end{otherthm}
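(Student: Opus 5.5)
The plan is a Baire category argument in the closed, hence Baire, set $\mathrm{Bif}_d$. Maximal oscillation of $f$ means that for Lebesgue a.e.\ $z\in\widehat{\mathbb C}$, every $\nu\in\cM_f$ lies in $\mathrm{Acc}(\se^f_n(z))_n$. Invariant measures depend on $f$, so I will not aim at them directly. Instead I will aim at measures that move continuously with $f$: combinations of measures supported on hyperbolic repelling periodic orbits, which persist under perturbation by their holomorphic continuation.

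First, for generic $f$, I reduce maximal oscillation to a countable family of conditions. Since $\mathrm{Acc}(\se^f_n(z))_n$ is closed, it is enough that it contains a dense subset of $\cM_f$. Two facts give such a subset.
\begin{enumerate}
\item For $f$ in a residual subset of $\mathrm{Bif}_d$, there are no attracting or parabolic cycles. This should hold because strictly postcritically finite maps are dense in $\mathrm{Bif}_d$, and having no non-repelling cycle of period $\le p$ is an open dense condition there.
\item Measures equidistributed on repelling periodic orbits, and their finite convex combinations, are dense in $\cM_f$. This follows from the Ka\-to\-k/Lyubich-type closing lemma for rational maps, combined with the specification-like property of expanding Cantor sets inside $J_f$.
\end{enumerate}
Concretely, I fix a countable family of such repelling periodic orbits at parameters in a countable dense set. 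For each rational vector of weights $w$ and each $m$, the continuation defines a measure $\nu_{w}(f)$ on an open set $\mathcal V$ of maps, and $\nu_{w}(f)$ depends continuously on $f\in\mathcal V$.

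Next, for each such datum and each $m,N$, I define
\[U_{w,m,N}:=\Bigl\{f\in\mathcal V:\ \Leb\bigl\{z:\exists\, n\in[N,N'],\ \mathsf d(\se^f_n(z),\nu_w(f))<\tfrac1m\bigr\}>1-\tfrac1m \text{ for some } N'\Bigr\}.\]
For a fixed $N'$ the condition involves finitely many iterates, each continuous in $f$, so $U_{w,m,N}$ is open. Once density is established, the intersection of the $U_{w,m,N}$ with the residual set of (a) is residual. For $f$ in this intersection, a.e.\ $z$ has every $\nu_w(f)$ in $\mathrm{Acc}(\se^f_n(z))_n$: for fixed $w$ and $m$, one applies Borel--Cantelli over a fast sequence of $N$. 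By (b), this gives maximal oscillation.

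The main obstacle is density of $U_{w,m,N}$ in $\mathrm{Bif}_d$. I would start from a strictly postcritically finite $f_0$, which exists arbitrarily close to any point of $\mathrm{Bif}_d$, and use transversality of critical relations (Epstein/van Strien) to unfold its critical orbits. By this unfolding I would choose $f_1$ near $f_0$ in which every critical orbit first spends a long, prescribed time shadowing the continued orbits carrying $\nu_w$ with the right frequencies, and then lands on an attracting cycle, or on a superattracting one created by a critical relation. Since $f_1$ is a hyperbolic map with an attracting cycle, it is not itself in $\mathrm{Bif}_d$. However, Lebesgue almost every point converges to the attracting cycles, and before converging its orbit follows the critical orbit through the prescribed shadowing stretch, during which its empirical measure is $\frac1m$-close to $\nu_w$. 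I then move along a curve from $f_1$ back into $\mathrm{Bif}_d$ while keeping this finite-time property, which is an open condition. The delicate points are controlling that the shadowing stretch dominates $\se_n$ at some time $n\ge N$, and ensuring the Lebesgue-positive basins of $f_1$ are entered near the critical values. For the latter I would rely on the Koebe distortion theorem applied to inverse branches along the critical orbit.
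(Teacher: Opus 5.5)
The paper gives no proof of this statement; it only quotes it from \cite{Ta22}, so your plan has to stand on its own. Step (a), the openness of $U_{w,m,N}$, and the deduction of maximal oscillation from the intersection are fine. You do not even need Borel--Cantelli: the sets of points having $\nu_w(f)$ as a $\frac1m$-accumulation value decrease in $m$, and each has measure $\ge 1-\frac1{m'}$ for every $m'\ge m$, hence full measure.

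\textbf{Main gap: density.} The failure is at the sentence ``move along a curve from $f_1$ back into $\mathrm{Bif}_d$ while keeping this finite-time property, which is an open condition''. Openness only gives a neighbourhood of $f_1$ of uncontrolled size. Since $f_1$ is hyperbolic, small neighbourhoods of $f_1$ consist of hyperbolic maps. The hyperbolic locus is open and contains no strictly postcritically finite map, so it is disjoint from their closure $\mathrm{Bif}_d$. Hence any path from $f_1$ to $\mathrm{Bif}_d$ must leave the hyperbolic component. Outside it, nothing forces Lebesgue-typical orbits to follow your cycle, and your argument gives no control there. Indeed, at a strictly postcritically finite $g$, Lebesgue-typical orbits equidistribute toward the absolutely continuous invariant measure of $g$.

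What you actually need is a statistical-instability statement for maps of $\mathrm{Bif}_d$ themselves: arbitrarily close to $f_0$ there are maps \emph{in} $\mathrm{Bif}_d$ whose Lebesgue-typical empirical measures come close to $\nu_w$. Candidates would be strictly postcritically finite maps whose physical measure is close to $\nu_w$, or maps of $\mathrm{Bif}_d$ all of whose critical points are attracted to parabolic cycles carrying approximately $\nu_w$. This is the heart of the theorem, and your plan does not supply it. By contrast, the ``delicate points'' you list at $f_1$ disappear if the superattracting cycle is itself the shadowing orbit. Then $J_{f_1}$ is Lebesgue-null, so almost every empirical measure converges to that cycle's measure, and no Koebe or dominance argument is needed.

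\textbf{Second gap: step (b).} Katok-type closing lemmas for rational maps approximate ergodic measures with \emph{positive} Lyapunov exponent. A generic $f\in\mathrm{Bif}_d$ may carry zero-exponent ergodic measures, which your cited argument does not cover. So you only get density in the closed convex hull of hyperbolic measures, which could be a proper subset of $\cM_f$.

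This can be repaired without any closing lemma at $f$:
\begin{enumerate}
\item The map $f\mapsto\cM_f$ is upper semicontinuous.
\item The map $f\mapsto\mathcal P(f)$, the closed convex hull of measures on repelling cycles, is lower semicontinuous, because repelling cycles persist.
\item By Fort's theorem, both are continuous on a residual subset of $\mathrm{Bif}_d$.
\item They coincide on the dense set of strictly postcritically finite maps. These maps are expanding for an orbifold metric, so periodic measures are dense in their invariant measures.
\end{enumerate}
Hence $\mathcal P(f)=\cM_f$ at every common continuity point.
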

One can show  that the latter maps leave  (conformal) invariant hyperbolic compact subsets of Hausdorff dimension arbitrarily close to $2$, and so by \cref{Bound topo2}, their order of emergence is maximal (=2).

The role of critical points is highlighted by a recent result of Costa and Santiago \cite{CS26}: every $C^2$ immersion of the circle has zero upper order of metric emergence with respect to Lebesgue measure. Thus, within the class of $C^2$ circle maps, positive emergence order requires the presence of critical points.

\section{A dictionary and some open problems}\label{section 3}
Let us notice that Entropy and Emergence are orthogonal notions. For instance, a doubling angle map of the circle has metric entropy $\log 2$ while its metric emergence is minimal ($\equiv 1$). Also, the map in \cref{thmBe22} has local emergence of maximal order 2, but its topological entropy is 0 (it is actually topologically conjugate to a rotation). 

Yet the well-established theory on the entropy for differentiable dynamical systems can be used as a dictionary to develop the theory of emergence. The previous results can be linked as in \cref{tab:entropy-emergence} \cpageref{tab:entropy-emergence}.
\begin{table}[htbp]
\centering

\makebox[\textwidth][c]{%
\begin{tabular}{|c|c|}
 \hline
{Topological Entropy}: & Topological Emergence, Def. \ref{def.te} and \ref{def.te2}:\\
$\frac 1n\log$(covering $\#$ of $M$ for $d_n$)&Covering $\#$ of space of erg. measures\\
				$d_n(x,y):= \max_{k\le n} d(f^k(x), f^k(y))$					&Covering $\#$ of accumulation values of $(\se_n)_n$\\
 &\\
 \hline
Metric Entropy:& Metric Emergence, Def. \ref{def.metric_em} and \ref{def.metric_em2}:\\
$h_\mu= \sup_{\cA} \lim_n \frac1n \sum_{\cA_n }\mu(A) |\log \mu(A)|$ &$\cE_\mu(\epsilon)=\min\{N: \varlimsup \int \mathsf{d}(\se^f_n , \{\mu_i\}_{ i\le N}) \, d\mu < \epsilon\}  $\\
with $\cA_n:= \bigvee_{k< n} f^{-k}\cA$&$ \cE'_\mu(\epsilon)=\min\{N: \int \varlimsup \mathsf{d}(\se^f_n , \{\mu_i\}_{ i\le N}) \, d\mu < \epsilon\}  $\\
&\\
 \hline
Katok { Metric Entropy} of $\nu$ ergodic:& { Metric Emergence }of $\mu$ invariant Thm \ref{quantization2emergence}:
\\
$h_\nu=\lim_{n\to \infty} \frac1n \log \mathcal Q^{\mathsf d_n}_\nu$
&$\cE_\mu= \mathcal Q_{\mathsf e_* \mu}$\\
 &\\
 \hline
Margulis-Ruelle inequality : & {Theorem \ref{Bound topo}:} \\
 $h_\nu \le \Dim M\cdot \int \log^+\| Df \| d\nu$ & $\mathcal {OE}_\mu, \mathcal {OE}'_\mu \le \Dim_B X$ \\
 &\\
 \hline
Positive Entropy Conjecture :& Conjecture \ref{mainconj}:\\
Positive metric entropy is typical & High emergence is typical\\
&\\
 \hline
Ledrappier-Young Theorem 1:& Finiteness of physical measures Thm \ref{Finite emergence}:\\
$d_u =\Dim E^u \Leftrightarrow \mu \text{ SRB}$
&$\cE_\mu' \equiv N\Leftrightarrow X\stackrel{o}= \bigcup_{i\le N} B_{\mu_i}$ \\
&\\
 \hline
Ledrappier-Young Theorem 2:& Local Emergence, Def. \ref{def.le}:\\
$h_\mu = \int \sum_i \lambda_i ^+ (d_i-d_{i-1})d\mu  $
 & $\mathrm{Scl}\, \cE^{loc}_\mu  = \int \mathrm{Scl}_{loc}  \, \se_*\mu\,d(\se_*\mu)$,\\
 & for $\mathrm{Scl}\in \{\Dim, \Ord\}$ \\
 &\\
 \hline
\end{tabular}%
}
\caption{Comparison between entropy and emergence.}
\label{tab:entropy-emergence}
\end{table}
\medskip

Note that  the Kolmogorov typicality of high emergence is known for symplectomorphisms displaying elliptic points (see \cref{thmBB2,thmBT}), while in this context, the typicality of their positive metric entropy is completely open. Still,  Anosov systems provide examples of dynamics with robustly positive metric entropy. Hence it is natural to ask:
\begin{question}[Robustness of high emergence]\label{Rob_hE} Does there exist an open set of differentiable dynamics with high metric emergence?
\end{question} 
In the same spirit as Jakobson's or Benedick-Carleson's Theorems, it is also natural to ask:
\begin{question}[Abundance of high emergence]\label{Ab_hE} Does there exist an open set of families $(f_p)_{p\in \cP} $ of differentiable dynamics such that for a set of parameters $p$ of positive Lebesgue measure, the map $f_p$ has high emergence?
\end{question} 
These questions can be asked in many classes of dynamics (symplectic, dissipative, polynomial, etc). A positive answer to \cref{Rob_hE} implies a positive answer to \cref{Ab_hE}. A positive answer to these questions would have revolutionary consequences. This would mean that for some systems governed by mathematical-physical laws, the same experiment may produce results whose averages are not the same.

In the same spirit as \cref{mainconj,thmBT}, we can specify:
\begin{problem} Show the existence of an open subset $U\subset \Diff^\infty(M)$ in which high emergence is Kolmogorov typical.
\end{problem} 
\begin{problem} Show the existence of an open subset $U\subset \Diff^\infty_{\Leb} (M)$ in which high emergence is Kolmogorov typical.
\end{problem} 

We can also try to push forward the direction of \cref{thmBe22,thmDe22,BB23} on the existence of high emergence within constrained classes of dynamical systems  by asking the following questions:
\begin{problem}
$\bullet$ Does there exist a steady Euler flow with high emergence? See \cite[Pbm 2.2]{BFPS23}

$\bullet$ Can the Chirikov standard map have high emergence?

$\bullet$ Does there exist a conservative Hénon map with high emergence?

$\bullet$ Is there a planar $C^1$-horseshoe with positive Lebesgue measure and high emergence? See \cite{Bo75}.

$\bullet$ Do the dynamics of some cosmological models (Einstein field equations) in the vicinity of their initial singularity display high emergence? In particular, is this the case for the Wainwright-Hsu vector field? See \cite[Ques. 3]{BD23}

$\bullet$ Under which conditions do dynamical models associated with multiplicative
cascades~\cite{Bertoin08,BJM10} have positive order of local emergence?
\end{problem} 
 
Another problem is about the typicality on which the conclusion of \cref{thmBe22} happens:
\begin{question}How typical is high local emergence?
\end{question}

In the next section we will bring other analogies to the above dictionary:
\begin{center} 
\begin{tabular}{|c|c|}
 \hline 
Convexity:& \cref{cvxem1}, for $\mathrm{Scl}\in \{\Dim, \Ord\}$:\\
 $\mu= t_o\cdot \mu_o+t_1\cdot \mu_1 $& $ \overline{\mathrm{Scl}}\, \cE_\mu = \max_i \overline{\mathrm{Scl}} \, \cE_{\mu_i}$ and $ \overline{\mathrm{Scl}}\, \cE'_\mu = \max_i \overline{\mathrm{Scl}} \, \cE'_{\mu_i}$\\
$ h_\mu = t_o\cdot h_{\mu_o}+t_1\cdot h_{\mu_1}$ &\\
& \cref{cvxem2}:\\
 & $\Scl\, \cE_{\mu}^{loc}  = t_o\cdot \Scl \, \cE^{loc}_{\mu_o} +t_1 \cdot \Scl \, \cE^{loc}_{\mu_1} $\\
&\\
 \hline 
{Variational Principle}:& {Variational Principles} Thm. \ref{varia emergence} and \ref{varia emergence2}:\\
$h_{top}=\sup_\nu h_\nu$ & $\mathcal O\cE_{top}= \max_{\mu} \mathcal O\cE_{\mu}$ \\
 & $\mathcal O\cE'_{top}= \max_{\mu} \mathcal O\cE'_\mu$\\
 &\\
 \hline
 \end{tabular}
 \end{center} 
 In the variational principle, the measures maximizing the order or dimension of emergence are not unique. Yet 
 we can wonder if there is a canonical measure of maximal local emergence, at least for some particular cases:
 \begin{problem} Is there a canonical measure of maximal local emergence for the doubling angle map of the circle?
\end{problem}
\section{Properties of emergences}\label{section 4}
In this section we denote by $(X,\mu)$  a compact metric space endowed with a probability measure $\mu$, and by $f$ a \emph{continuous} map of $X$ which does \emph{not} need to leave $\mu$ invariant.

 \subsection{Disjoint union}
 \subsubsection{Metric emergence of disjoint unions}
 We have the following, see also \cite[Thm 3 (b)]{CRV24}:
 \begin{otherthm} \label{cvxem1} 
If $\mu= t\cdot  \mu_o+(1-t)\cdot \mu_1$ with $t\in (0,1)$, then for any $\Scl\in \{\Dim, \Ord\}$, it holds: 
 \[ \overline \Scl\, \cE_{\mu }(f) = \max \{\overline \Scl\, \cE_{\mu_o}(f),\overline\Scl \, \cE_{\mu_1}(f) \} ,\]
 \[ \overline \Scl\, \cE'_{\mu }(f) = \max \{ \overline \Scl\, \cE'_{\mu_o}(f),\overline \Scl \, \cE'_{\mu_1}(f) \} \; . \] 
 \end{otherthm}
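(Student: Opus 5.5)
The approach is to prove, for every $\epsilon>0$, the two pointwise inequalities
\[ \max\{\cE_{\mu_o}(f)(\epsilon/t),\ \cE_{\mu_1}(f)(\epsilon/(1-t))\}\ \le\ \cE_\mu(f)(\epsilon)\ \le\ \cE_{\mu_o}(f)(\epsilon)+\cE_{\mu_1}(f)(\epsilon), \]
and the same for $\cE'$. The statement then follows by passing to the upper dimensions and orders. Write $\phi_n(x):=\min_i \mathsf d(\se^f_n(x),\mu_i)$ for a finite family $(\mu_i)$. Since $\mu=t\mu_o+(1-t)\mu_1$, we have $\int \phi_n\,d\mu = t\int\phi_n\,d\mu_o+(1-t)\int \phi_n\,d\mu_1$, and the same identity holds with $\varlimsup_n\phi_n$ in place of $\phi_n$.

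\emph{Upper bound.} Take a family $F_o$ realizing $\cE_{\mu_o}(f)(\epsilon)$ and a family $F_1$ realizing $\cE_{\mu_1}(f)(\epsilon)$, and use $F_o\cup F_1$. The minimum over a larger family only decreases, so
\[ \varlimsup_n \int \phi_n\,d\mu\le t\varlimsup_n\int\phi^{F_o}_n d\mu_o+(1-t)\varlimsup_n\int\phi^{F_1}_nd\mu_1<\epsilon, \]
using subadditivity of $\limsup$. For $\cE'$ the same argument applies with the $\limsup$ inside the integral, and there it is immediate.

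\emph{Lower bound.} If $(\mu_i)_{i\le N}$ works for $\mu$ at scale $\epsilon$, then $t\varlimsup_n\int\phi_n\,d\mu_o\le\varlimsup_n\int\phi_n\,d\mu<\epsilon$, because $\phi_n\ge0$ and $\limsup(a_n+b_n)\ge\limsup a_n+\liminf b_n\ge\limsup a_n$. Hence the same family works for $\mu_o$ at scale $\epsilon/t$, and symmetrically for $\mu_1$. The $\cE'$ case is again direct.

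\emph{Passing to $\overline{\Dim}$ and $\overline{\Ord}$.} The lower bound gives $\overline\Scl\,\cE_{\mu_o}\le\overline\Scl\,\cE_\mu$, provided the constant rescaling $\epsilon\mapsto\epsilon/t$ does not affect these quantities. This holds because $|\log(\epsilon/t)|/|\log\epsilon|\to1$ and the emergence functions are nonincreasing in $\epsilon$. For the upper bound, $\cE_{\mu_o}(\epsilon)+\cE_{\mu_1}(\epsilon)\le 2\max_i\cE_{\mu_i}(\epsilon)$, and the factor $2$ is invisible to both $\log\phi/|\log\epsilon|$ and $\log|\log\phi|/|\log\epsilon|$. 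Then $\varlimsup$ of a max equals the max of the $\varlimsup$'s.

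The main obstacle is the bookkeeping in this last step, specifically the degenerate cases for $\Ord$. If some $\cE$ is eventually $1$ or bounded, the convention that the order is $0$ is needed, and one must check that the factor $2$ and the rescaling keep bounded functions bounded. Otherwise everything reduces to elementary manipulations with $\limsup$. No invariance of $\mu$ is used, so the result holds for non-invariant $\mu$ as stated.
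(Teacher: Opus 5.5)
Your proposal is correct and follows the paper's proof essentially step for step. The paper proves the same two pointwise bounds: first, $\cE_{\mu'}(f)(\epsilon/s)\le\cE_\mu(f)(\epsilon)$ whenever $s\mu'\le\mu$; second, $\cE_\mu(f)(\epsilon)\le\cE_{\mu_o}(f)(\epsilon)+\cE_{\mu_1}(f)(\epsilon)$, obtained from the union of the two optimal families. Your handling of the rescaling $\epsilon\mapsto\epsilon/t$, the factor $2$, and the bounded (order-zero) case spells out the passage to $\overline{\Scl}$ that the paper leaves implicit with ``it suffices''.
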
 
\begin{proof} 
To show that $\overline \Scl \, \cE_{\mu}(f)$ and $\overline \Scl \, \cE'_{\mu}(f)$ are at least the above maxima, it suffices to use the following generalization of \cite[Lem. 3.18]{BB21} with $\mu'= \mu_o$ or $\mu_1$ and $s= t$ or $1-t$:
 \begin{lemma} Let $\mu'$ be a probability measure on $X$ such that $s\cdot \mu' \le \mu$. Then 
 \[\cE_{\mu'}(f)( \tfrac \epsilon s) \le \cE_\mu(f)( \epsilon) \qand \cE'_{\mu'}(f)( \tfrac \epsilon s) \le \cE'_\mu(f)( \epsilon) \quad \forall \epsilon>0\; .\] 
 \end{lemma} 
 \begin{proof} 
 Let $(\mu_i)_{1\le i\le N}$ be a minimal family such that:
 \[  \varlimsup_{n} \int \min_i d( \se_n, \mu_i)d\mu < \epsilon \quad 
\text{ or resp. } 
 \int \varlimsup_{n} \min_i d( \se_n, \mu_i)d\mu < \epsilon\; . \]
 Since $s\,d\mu'\le d\mu$, dividing by $s$ yields:
 \[  \varlimsup_{n} \int \min_i d( \se_n, \mu_i)d\mu' < \epsilon/s \quad 
\text{ or resp. } 
 \int \varlimsup_{n} \min_i d( \se_n, \mu_i)d\mu' < \epsilon/s\; . \]
 Hence $\cE_{\mu'}(f)(\epsilon/s)\le N$ or resp. $\cE'_{\mu'}(f)(\epsilon/s)\le N$. This proves the lemma as $N$ is minimal. 
 \end{proof}
 To show that $\overline \Scl \, \cE_{\mu}(f)$ and $\overline \Scl \, \cE'_{\mu}(f)$ are at most the above maxima, it suffices to apply:
 \begin{lemma} 
 \[\cE_{\mu}(f)(\epsilon) \le \cE_{\mu_o}(f)(\epsilon)+ \cE_{\mu_1}(f)(\epsilon)\qand \cE'_{\mu}(f)(\epsilon) \le \cE'_{\mu_o}(f)(\epsilon)+\cE'_{\mu_1}(f)(\epsilon) \]
 \end{lemma} 
Indeed, given minimal families $(\mu_i)_{i\le N_o}$ and $(\mu_i)_{N_o< i\le N_o+N_1}$ such that:
 \[  \varlimsup_{n} \int \min_{i\le N_o} d( \se_n, \mu_i)d\mu_o <\epsilon \qand \varlimsup_{n} \int \min_{N_o< i\le N_o+N_1} d( \se_n, \mu_i)d\mu_1 <\epsilon ,\]
by taking the convex combination of these two inequalities, we obtain: 
 \[  \varlimsup_{n} \int \min_{i\le N_o+N_1} d( \se_n, \mu_i) d\mu <\epsilon\; . \]
We proceed similarly when the $\limsup$ is after the integral. 
 \end{proof} 
\subsubsection{Local emergence of disjoint unions} 
\begin{otherthm}\label{cvxlocalEme}\label {cvxem2} Let $(f,X,\mu)$ be empirical and let $\mu_o$ and $\mu_1$ be 
such that:
\[\mu= t \cdot \mu_o+(1-t) \cdot  \mu_1\text{ with }t\in (0,1).\] 
Then for any $\Scl\in \{\Dim, \Ord\}$, it holds: 
 \[ \overline \Scl\, \cE_{\mu}^{loc} (f) = t\cdot \overline \Scl \, \cE^{loc}_{\mu_o}(f)+(1-t) \cdot \overline\Scl \, \cE^{loc}_{\mu_1}(f)\; ,\]
 \[ 
 \underline \Scl\, \cE_{\mu}^{loc} (f) = t \cdot \underline \Scl\, \cE^{loc}_{\mu_o}(f)+(1-t) \cdot \underline \Scl \, \cE^{loc}_{\mu_1}(f) 
 \; . \] 
\end{otherthm}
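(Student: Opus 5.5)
The plan is to reduce the statement to a comparison of local dimensions/orders of two measures on the compact metric space $Y=\cM(X)$, one of which is dominated by a multiple of the other. Since $(f,X,\mu)$ is empirical and $\mu_o,\mu_1\ll\mu$, both $(f,X,\mu_o)$ and $(f,X,\mu_1)$ are empirical with the same empirical function $\se=\se^f$. By linearity of the pushforward,
\[\nu:=\se_*\mu=t\,\nu_o+(1-t)\,\nu_1,\qquad \nu_i:=\se_*\mu_i .\]
The key claim will be that for $\nu_o$-a.e.\ $m\in Y$ the local quantities of $\nu$ and $\nu_o$ at $m$ coincide, for each $\Scl\in\{\Dim,\Ord\}$ and both $\overline{\,\cdot\,}$ and $\underline{\,\cdot\,}$. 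The same will hold for $\nu_1$. Granting this, we split the integral:
\[\int \overline\Scl_{loc}\nu\,d\nu=t\int \overline\Scl_{loc}\nu\,d\nu_o+(1-t)\int\overline\Scl_{loc}\nu\,d\nu_1 ,\]
and then replace $\overline\Scl_{loc}\nu$ by $\overline\Scl_{loc}\nu_i$ inside the $i$-th integral. This gives exactly the stated affine formula, and the lower versions are handled identically. Measurability of $m\mapsto\nu(B(m,r))$ is standard (lower semicontinuity), and restricting to rational/dyadic radii suffices by monotonicity.

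\emph{Easy inequality.} Since $\nu\ge t\,\nu_o$, we have $\nu(B(m,r))\ge t\,\nu_o(B(m,r))$ at every point. A multiplicative constant does not affect $\Dim$ or $\Ord$, because these only depend on $|\log\nu(B(m,r))|$ up to an additive $O(1)$, both when this quantity tends to infinity and when it stays bounded (atoms, where both quantities are $0$). Hence $\Scl_{loc}\nu\le\Scl_{loc}\nu_o$ everywhere.

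\emph{Reverse inequality (main obstacle).} $Y$ is infinite dimensional, so I will not rely on Besicovitch differentiation of $d\nu_o/d\nu$. Instead I plan a Borel--Cantelli argument using only a Vitali-type covering. Set $r_k=2^{-k}$ and $\delta_k=k^{-2}$, and consider
\[E_k:=\{m\in Y:\ \nu_o(B(m,5r_k))<\delta_k\,\nu(B(m,r_k))\}.\]
Choose a maximal family of points $m_j\in E_k$ with pairwise disjoint balls $B(m_j,r_k)$; it is finite by compactness. By maximality $E_k\subset\bigcup_j B(m_j,2r_k)\subset\bigcup_j B(m_j,5r_k)$. Therefore
\[\nu_o(E_k)\le\sum_j\nu_o(B(m_j,5r_k))<\delta_k\sum_j\nu(B(m_j,r_k))\le\delta_k .\]
Since $\sum_k\delta_k<\infty$, for $\nu_o$-a.e.\ $m$ we get $\nu_o(B(m,5r_k))\ge k^{-2}\nu(B(m,r_k))$ for all large $k$. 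For $r\in[r_{k+1},r_k]$, monotonicity then yields
\[\nu_o(B(m,10r))\ge C\,|\log r|^{-2}\,\nu(B(m,r)).\]
A constant rescaling of $r$ and a factor $|\log r|^{-2}$ change $|\log\nu(B)|$ by $O(\log|\log r|)$, which is negligible against $|\log r|$ for $\Dim$ and against $\log|\log\nu(B)|$ (or is harmless when that stays bounded) for $\Ord$. Hence $\Scl_{loc}\nu_o\le\Scl_{loc}\nu$ holds $\nu_o$-a.e.

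I expect the delicate points to be the following. First, choosing $\delta_k$ summable but subpolynomial, so that the dimension does not shift by an additive constant (as it would with $\delta_k=r_k$). Second, checking the order case carefully when $\nu(B(m,r))$ does not tend to $0$; there both sides are $0$ by the convention.
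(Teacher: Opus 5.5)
Your proof is correct. The architecture is the same as the paper's: push forward to $\nu=t\nu_o+(1-t)\nu_1$ on $\cM(X)$, get $\Scl_{loc}\nu\le\Scl_{loc}\nu_\delta$ everywhere from $\nu\ge t\nu_o$, then upgrade to equality $\nu_\delta$-almost everywhere. The difference lies in how you obtain the a.e.\ reverse inequality and how you assemble the result.

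\emph{The paper's route.} It argues by contradiction. If $E=\{\Scl_{loc}\nu<a<b<\Scl_{loc}\nu_\delta\}$ had positive $\nu_\delta$-mass, Helfter's comparison theorems between local scales and packing/Hausdorff dimensions or orders would force $b\le\Scl_P E\le a$ (resp.\ with $\Scl_H$). It then builds the integral identity through the partition into the sets $Y_{i,j}$ and \cref{nu1(Y)=0}.

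\emph{Your route.} You prove a quantitative pointwise bound,
\[\nu_o(B(m,10r))\ge C|\log r|^{-2}\,\nu(B(m,r))\quad\text{for }\nu_o\text{-a.e.\ }m,\]
using a single-scale disjoint-ball covering and Borel--Cantelli with the summable, subpolynomial weights $\delta_k=k^{-2}$. You then split $\int\cdot\,d\nu$ directly.

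\emph{What each buys.} Your argument is self-contained and elementary. It needs only total boundedness of $\cM(X)$, with no Hausdorff/packing machinery and no differentiation theorem. Your direct splitting also shows that the $Y_{i,j}$ bookkeeping is unnecessary once $\nu_\delta$-a.e.\ equality is known, and the paper's proof of \cref{local scale mixture} in fact already establishes that. The paper's route, in turn, plugs into Helfter's general framework, which is what backs its closing remark that the statement holds for any scale in the sense of \cite{He25}. Your argument instead relies on a specific property of $\Dim$ and $\Ord$: they are insensitive to a bounded rescaling of the radius and to an $O(\log|\log r|)$ error in $|\log\nu(B(m,r))|$.
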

\begin{proof} Let $\nu := \se_* \mu$; recall that it is a probability measure on $Y:= \cM(X)$. Then the theorem is a consequence of the following:
\begin{proposition}\label{cvxlocalEme2} Let $\nu_o$ and $\nu_1$ be probability measures on a metric space $Y$, and let $\nu=t\nu_o+(1-t)\nu_1$ with $t\in(0,1)$. Then, for any $\Scl\in \{\Dim, \Ord\}$, it holds: 
 \[ \int \overline \Scl_{loc} \, \nu d\nu = t \cdot \int \overline \Scl_{loc} \, \nu_o d\nu_o+(1-t) \cdot \int \overline \Scl_{loc} \, \nu_1 d\nu_1 \; ,\]
 \[ \int \underline \Scl_{loc}\, \nu d\nu = t \cdot \int \underline \Scl_{loc} \, \nu_o d\nu_o+(1-t) \cdot \int \underline \Scl_{loc} \, \nu_1 d\nu_1 \; .\]
\end{proposition} 
Let us show this proposition. For the sake of simplicity, we denote $ \Scl_{loc} \, \nu$ for either $\overline \Scl_{loc} \, \nu$ or $\underline \Scl_{loc} \, \nu$. 
We prove below:
\begin{fact}\label{local scale mixture}
For $\nu$-a.e. $y\in Y$, it holds:
\[
\Scl_{loc}\nu(y)
=\min\{\Scl_{loc}\nu_o(y),\Scl_{loc}\nu_1(y)\}.
\]
\end{fact}
 Let $\epsilon >0$, for $i, j\in \overline \N:= \N \cup \{\infty\}$ and $\delta \in \{0,1\}$ we denote:
\[ \left\{ \begin{array}{cl} Y^\delta_i:= \{ y\in Y: \Scl_{loc}\, \nu_\delta(y) \in [i\epsilon, (i+1)\epsilon)\} &\text{ if }i<\infty\\
&\\
 Y^\delta_i:= \{ y\in Y: \Scl_{loc}\, \nu_\delta(y) =\infty\}& \text{ otherwise.}\end{array}\right. \]
Let: 
\[Y_{i,j} :=Y^0_i\cap Y^1_j\; .\]

We remark that:
\[ Y:= \bigsqcup_{i,j\in \overline \N} Y_{i,j}\; .\] 
 Let us first establish the following equality for $i=j=\infty$, and for $|i-j|\ge 2$ when at least one of $i,j$ is finite:
\begin{equation}\label{cvxij} \int_{Y_{i,j}} \Scl_{loc}\, \nu d\nu= t \int_{Y_{i,j}} \Scl_{loc}\, \nu_o d\nu_o+(1-t) \int_{Y_{i,j}} \Scl_{loc}\, \nu_1 d\nu_1\; .\end{equation} 
If $i=j=\infty$, \cref{cvxij} follows from \cref{local scale mixture}, since all three local scales equal $\infty$ for $\nu$-a.e. point of $Y_{\infty,\infty}$.
For instance, assume now that $i<\infty$ and $i+2\le j$. If $\nu(Y_{i,j})=0$, \cref{cvxij} is obvious. Otherwise, $\nu(Y_{i,j})>0$. We will show below:
\begin{lemma}\label{nu1(Y)=0} If $i+2\le j$ and $i<\infty$, then it holds $\nu_1(Y_{i,j})=0$.
\end{lemma}
Hence only the case $\nu_o(Y_{i,j})>0$ and $\nu_1(Y_{i,j})=0$ remains. 
As $ \Scl_{loc}\, \nu_o < \Scl_{loc}\, \nu_1$, \cref{local scale mixture} gives $ \Scl_{loc}\, \nu= \Scl_{loc}\, \nu_o$ for $\nu$-a.e. $y\in Y_{i,j}$. Then:
\[ \int_{Y_{i,j}} \Scl_{loc}\, \nu d\nu= \int_{Y_{i,j}} \Scl_{loc}\, \nu_o d\nu = t \int_{Y_{i,j}} \Scl_{loc}\, \nu_o d\nu_o\]
which leads to \cref{cvxij} as $\nu_1(Y_{i,j})=0$.

Finally, if $i,j<\infty$ and $|i-j|\le 1$, \cref{local scale mixture} gives 
$ |\Scl_{loc}\, \nu- \Scl_{loc}\, \nu_\delta|\le 2\epsilon$ for every $\delta\in \{0,1\}$ and for $\nu$-a.e. $y\in Y_{i,j}$. Hence it holds:
\[ \int_{Y_{i,j}} \Scl_{loc}\, \nu d\nu= t \int_{Y_{i,j}} \Scl_{loc}\, \nu_o d\nu_o+(1-t) \int_{Y_{i,j}} \Scl_{loc}\, \nu_1 d\nu_1 
+O(\epsilon \nu(Y_{i,j}))\; .
\]

Therefore, we obtain:
\[ 
 \int_{Y } \Scl_{loc}\, \nu d\nu= \sum_{i,j} \int_{Y_{i,j}} \Scl_{loc}\, \nu d\nu 
 \]
 \[ = \sum_{i,j} t \int_{Y_{i,j}} \Scl_{loc}\, \nu_o d\nu_o +(1-t) \int_{Y_{i,j}} \Scl_{loc}\, \nu_1 d\nu_1+O(\epsilon \nu(Y_{i,j})),\]
and so:
\[ \int_{Y } \Scl_{loc}\, \nu d\nu= t \int_{Y } \Scl_{loc}\, \nu_o d\nu_o + (1-t) \int_{Y } \Scl_{loc}\, \nu_1 d\nu_1 
+O(\epsilon ) \; . \]
As $\epsilon$ is arbitrarily small, we obtain the desired equality. 
 \end{proof}
\begin{proof}[Proof of \cref{nu1(Y)=0}] Recall that $i+2\le j$ and assume for the sake of contradiction that $\nu_1(Y_{i,j})>0$. 
The proof uses the notions of Hausdorff and Packing dimensions $\Dim_P$ and $\Dim_H$ when $\Scl=\Dim$, and their counterparts the Hausdorff and Packing orders $\Ord_P$ and $\Ord_H$ when $\Scl=\Ord$. We refer to \cite{He25} for definitions. Here, let us just recall that these invariants are non-decreasing real functions from the set of subsets of $Y$. We have the following:
\begin{lemma} Let $\alpha \ge 0$ and let $Y'\subset Y$ be such that $\nu_1(Y')>0$. Then:
\begin{itemize}
\item $ \Scl_P Y'\ge \alpha$ if $ \overline \Scl_{loc} \nu_1(y) \ge \alpha$ for every $y\in Y'$. 
\item $ \Scl_H Y'\ge \alpha$ if $ \underline \Scl_{loc} \nu_1(y) \ge \alpha$ for every $y\in Y'$. 
\end{itemize} 
\end{lemma}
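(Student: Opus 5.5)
This is a mass distribution principle, and I would prove it by first reducing to a uniform version of the hypothesis on a subset of positive measure and then covering by balls centred at points of that subset. I take the definitions of $\Scl_P$ and $\Scl_H$ from \cite{He25}. For $\Scl=\Dim$ they are the usual packing and Hausdorff dimensions built from sums $\sum r_k^s$. For $\Scl=\Ord$ the gauge $r^s$ is replaced by $\exp(-r^{-s})$, so that $\Ord$ appears as $\Dim$ with $r$ replaced by $1/|\log r|$ at the level of the local-scale functions. I will write $g_s(r)$ for the relevant gauge in both cases. The two items are handled in parallel, and in each case I fix $\alpha'<\alpha$ and aim to show $\Scl Y'\ge\alpha'$.

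\emph{Hausdorff item.} Assume $\underline\Scl_{loc}\nu_1(y)\ge\alpha$ on $Y'$. Then for every $y\in Y'$ there is $r_y>0$ with $\nu_1(B(y,r))\le g_{\alpha'}(r)$ for all $r<r_y$.

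1. The sets $Y'_m:=\{y\in Y': r_y\ge 1/m\}$ increase to $Y'$. Since $\nu_1(Y')>0$, some $Y'_m$ satisfies $\nu_1^*(Y'_m)>0$, using outer measure if measurability is an issue.
2. Take any cover of $Y'_m$ by sets $U_k$ of diameter $r_k<1/(2m)$, keeping only those meeting $Y'_m$. Each such $U_k$ lies in a ball $B(y_k,r_k)$ with $y_k\in Y'_m$.
3. Then $\nu_1^*(Y'_m)\le\sum_k g_{\alpha'}(r_k)$. Hence the $\alpha'$-Hausdorff measure of $Y'_m$ is positive, and $\Scl_H Y'\ge\Scl_H Y'_m\ge\alpha'$ by monotonicity.

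The only care needed in the $\Ord$ case is that $g$ is evaluated at the diameter, possibly after doubling it. The factor $2$ is harmless: $\exp(-(2r)^{-s})$ versus $\exp(-r^{-s})$ only shifts the exponent $s$ by an arbitrarily small amount in the limit.

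\emph{Packing item.} Assume $\overline\Scl_{loc}\nu_1(y)\ge\alpha$ on $Y'$. Then for each $y\in Y'$ there are arbitrarily small $r$ with $\nu_1(B(y,r))\le g_{\alpha'}(r)$.

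1. Packing dimension is $\inf\{\sup_k \overline{\Scl}_B Z_k : Y'\subset\bigcup_k Z_k\}$. So I show that any countable cover $Y'\subset\bigcup_k Z_k$ has some $Z_k$ with upper box scale at least $\alpha'$.
2. Since $\nu_1(Y')>0$, some $Z:=Z_k$ has $\nu_1^*(Z\cap Y')=:c>0$.
3. For each $\delta>0$, the balls $B(y,r)$ with $y\in Z\cap Y'$, $r<\delta$ and $\nu_1(B(y,r))\le g_{\alpha'}(r)$ form a fine cover of $Z\cap Y'$. A Vitali-type (5r) covering lemma for finite Borel measures on the compact metric space $Y$ yields disjoint balls $B(y_l,r_l)$ with $\sum_l\nu_1(B(y_l,5r_l))\ge c/2$.
4. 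Pigeonholing the radii into dyadic (for $\Dim$) or appropriate (for $\Ord$) scales, I find a scale $r$ with many disjoint balls of radius about $r$ centred in $Z$. Their number is at least roughly $c/(C\,g_{\alpha'}(5r)\,\cdot\,\#\text{scales})$.
5. This gives a packing number of $Z$ at scale $r$ of order $g_{\alpha'}(r)^{-1}$, up to factors that are negligible after taking logarithms (respectively iterated logarithms). Hence $\overline\Scl_B Z\ge\alpha'$.

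The main obstacle is step 3 of the packing item. The hypothesis bounds $\nu_1(B(y,r))$, but the Vitali lemma naturally controls $\nu_1(B(y,5r))$. So I would apply the hypothesis to balls of radius $5r$ directly, selecting balls $B(y,r)$ such that $\nu_1(B(y,5r))\le g_{\alpha'}(5r)$. The pigeonholing over scales must also lose only a subexponential factor. In the $\Dim$ case this works by summing over dyadic scales with weights $l^{-2}$. In the $\Ord$ case the gauge $\exp(-r^{-s})$ is so steep that the constant $5$ and the polynomial number of relevant scales are absorbed by lowering $\alpha'$ slightly. Letting $\alpha'\uparrow\alpha$ concludes.
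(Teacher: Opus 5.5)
Your proof is correct, but it takes a genuinely different route from the paper's. The paper proves nothing by hand here. It passes to $\sigma:=\nu_1|_{Y'}$, which still has local scale at least $\alpha$ on $Y'$; it cites \cite[Lem.~3.4]{He25} for this, though $\sigma\le\nu_1$ already gives it at every point. It then applies Helfter's Theorem~B, which compares the local scales of a measure with the Hausdorff and packing scales of sets carrying it, to the measure $\sigma$ carried by $Y'$.

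You instead reprove that comparison for the two gauges $r^{s}$ and $\exp(-r^{-s})$.
\begin{itemize}
\item For the Hausdorff item, you uniformize $r_y$ on a subset $Y'_m$ of positive outer measure and compare an arbitrary fine cover with $g_{\alpha'}$.
\item For the packing item, you use $\Scl_P Y'=\inf\{\sup_k\overline\Scl_B Z_k : Y'\subset\bigcup_k Z_k\}$. You combine the elementary $5r$ covering lemma, applied to balls $B(y,\rho/5)$ with $\nu_1(B(y,\rho))\le g_{\alpha'}(\rho)$, with a $k^{-2}$-weighted dyadic pigeonhole. This yields at least a constant times $g_{\alpha'}(2^{-k})^{-1}/k^{2}$ points of $Z_k$ that are $2^{-k}/10$-separated.
\end{itemize}
Your fix for the $5r$ mismatch (choosing the balls so that the hypothesis controls the enlarged ball) is the right one. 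The losses (the factor $k^2$ and constant rescalings of the radius) indeed disappear after one logarithm for $\Dim$ and after two for $\Ord$.

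What your route buys is self-containment with minimal hypotheses. You use only the basic $5r$ lemma in a separable metric space, with no doubling or Besicovitch property of $\nu_1$ and no packing measures. Your single external input is the identification of $\Scl_P$ with the modified upper box scale. For both gauges this follows from the usual Tricot argument, since constant changes of radius again cost only an arbitrarily small loss in the exponent. The paper's route is two lines and, through Theorem~B, holds verbatim for every scale in Helfter's sense, as the subsequent remark asserts. Yours extends only to gauges for which multiplying the radius by a fixed constant costs an arbitrarily small loss in the exponent.
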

\begin{proof} By \cite[Lem. 3.4]{He25}, the restriction $\sigma:= \nu_1|Y'$ satisfies respectively
$ \overline \Scl_{loc} \sigma(y) \ge \alpha$ and $ \underline \Scl_{loc} \sigma(y) \ge \alpha$ for 
$\nu_1$ a.e. $y\in Y'$. Since $\sigma(Y\setminus Y')=0$, Helfter's Theorem B
\cite{He25} implies respectively that
$\Scl_P Y'\ge\alpha$ or $\Scl_H Y'\ge\alpha$. 
\end{proof} 
Hence it holds respectively that $\Scl_P Y_{i,j}\ge j\epsilon$ or $\Scl_H Y_{i,j}\ge j\epsilon$. Then we recall the following lemma:
\begin{otherthm}[Helfter] 
Let $\beta \ge 0$ and let $Y'\subset Y$. Then:
\begin{itemize} 
\item if $\overline \Scl_{loc} \nu_o(y) < \beta$ for every $y\in Y'$, then it holds $ \Scl_P Y'\le \beta$, 
\item if $\underline \Scl_{loc} \nu_o(y) < \beta$ for every $y\in Y'$, then it holds $ \Scl_H Y'\le \beta$. 
\end{itemize}
\end{otherthm}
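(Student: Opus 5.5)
The plan is the standard mass-distribution covering argument: use the hypothesis $\Scl_{loc}\nu_o<\beta$ pointwise to build covers of $Y'$ whose cost at scale $\beta$ is controlled by $\nu_o$-masses. I treat the Hausdorff case first and then the packing case, for both $\Scl=\Dim$ and $\Scl=\Ord$. The guiding heuristic is that a ball $B(y,r)$ with $y\in Y'$ has $\nu_o$-mass at least $r^{\beta}$ (resp. $\exp(-r^{-\beta})$). Since $\nu_o$ is a probability measure, only about $r^{-\beta}$ (resp. $\exp(r^{-\beta})$) disjoint such balls can exist. The covering numbers of $Y'$ are therefore governed by $\beta$.

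\emph{Hausdorff case.} Fix $\beta'\in(\beta' ,\ldots)$ with $\beta'>\beta$ and $\delta>0$. For each $y\in Y'$ the hypothesis $\underline\Scl_{loc}\nu_o(y)<\beta$ provides arbitrarily small radii $r_y<\delta$ with $\nu_o(B(y,r_y))\ge \phi_\beta(r_y)$. Here $\phi_\beta(r)=r^\beta$ when $\Scl=\Dim$ and $\phi_\beta(r)=\exp(-r^{-\beta})$ when $\Scl=\Ord$. I apply the Vitali $5r$-covering lemma to the family $\{B(y,r_y)\}$, whose radii are bounded, to extract a disjoint subfamily $B(y_k,r_k)$ such that the balls $B(y_k,5r_k)$ cover $Y'$. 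I then estimate the Hausdorff premeasure at scale $\beta'$, i.e. $\sum_k \phi_{\beta'}(10 r_k)^{-1}$-type quantities. For $\Dim$ this is $\sum (10r_k)^{\beta'}\lesssim \sum r_k^{\beta}\cdot r_k^{\beta'-\beta}$. This is at most $10^{\beta'}\delta^{\beta'-\beta}\sum\nu_o(B(y_k,r_k))\le C\delta^{\beta'-\beta}$, which tends to $0$, so $\Dim_H Y'\le\beta'$. For $\Ord$ I use the definition of Hausdorff order from \cite{He25}, where the gauge is $r\mapsto \exp(-r^{-\beta'})$ or the $\log$-version adapted there. The same disjointness bound $\sum_k\nu_o(B(y_k,r_k))\le1$ is used, together with the fact that $\exp(-(10r)^{-\beta'})/\exp(-r^{-\beta})\to0$ as $r\to0$ when $\beta'>\beta$. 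This yields $\Ord_H Y'\le\beta'$. Letting $\beta'\downarrow\beta$ gives the claim.

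\emph{Packing case.} The hypothesis $\overline\Scl_{loc}\nu_o<\beta$ now holds at \emph{all} small radii. I write $Y'=\bigcup_m Y'_m$, where $Y'_m$ is the set of $y$ with $\nu_o(B(y,r))\ge\phi_\beta(r)$ for every $r<1/m$. For fixed $m$ and $r<1/(2m)$, any $r$-separated subset $\{y_k\}$ of $Y'_m$ gives disjoint balls $B(y_k,r/2)$, each of $\nu_o$-mass at least $\phi_\beta(r/2)$. Its cardinality is therefore at most $1/\phi_\beta(r/2)$. This gives $\overline\Scl_B Y'_m\le\beta$ for the upper box dimension or order. Packing dimension (resp. order, as in \cite{He25}) is the countably stable regularization $\inf\{\sup_m\overline\Scl_B Z_m : Y'\subset\bigcup Z_m\}$, so $\Scl_P Y'\le\beta$.

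The main obstacle is the $\Ord$ version of the Hausdorff case. The Vitali enlargement by a factor $5$ or $10$ is harmless for power gauges but not obviously so for super-exponential gauges. I would handle this by absorbing the constant: $(10r)^{-\beta'}\ge r^{-\beta}+1$ for small $r$ since $\beta'>\beta$. I would also need to check that this matches Helfter's precise definition of $\Ord_H$, i.e. whether the gauge is $\exp(-r^{-\alpha})$ or involves $\log|\log|$. If Vitali is unavailable because $Y$ is not doubling, it still holds in the general metric setting for bounded radii, which is all that is needed here.
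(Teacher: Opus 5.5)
Your proof is correct and matches the paper's route: the paper simply invokes Helfter's proofs of Theorem B (ii)--(iii), remarking that they never use $\nu_o(Y\setminus Y')=0$. Your argument is that standard covering/mass-counting argument, with a $5r$-covering lemma for the Hausdorff bound and, for the packing bound, the decomposition $Y'=\bigcup_m Y'_m$ into sets with a uniform lower mass bound, a disjoint-ball count and countable stability; it likewise uses only the pointwise hypothesis on $Y'$. Two cosmetic slips: the premeasure is $\sum_k\phi_{\beta'}(10r_k)$, not a sum of inverses, and in the packing step you only need $\Scl_P Z\le\overline{\Scl}_B Z$ plus countable stability, not the full identification with the modified box scale.
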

\begin{proof} The proofs of the two items of this lemma are verbatim the ones of resp. Thm B (ii) and (iii) P30 of \cite{He25} (the proof does not use that $\nu_o( Y\setminus Y')=0$).\end{proof}  
Thus it holds respectively that $\Scl_P Y_{i,j}\le (i+1)\epsilon$ or $\Scl_H Y_{i,j}\le (i+1)\epsilon$. A contradiction with $i\le j-2$. This completes the proof of \cref{cvxij} when $|i-j|\ge 2$. \end{proof}

\begin{proof}[Proof of \cref{local scale mixture}]
The inequalities $\nu\ge t\nu_o$ and $\nu\ge(1-t)\nu_1$
give $\Scl_{loc}\nu\le\Scl_{loc}\nu_\delta$,
for $\delta\in\{0,1\}$.
We claim that equality holds $\nu_\delta$-a.e.
Otherwise, there exist rational numbers $0<a<b$ such that
\[
E:=\{y:\Scl_{loc}\nu(y)<a<b<
\Scl_{loc}\nu_\delta(y)\}
\]
has positive $\nu_\delta$-measure.
The two comparison results used in the preceding proof give
\[
b\le\Scl_P E\le a
\quad\text{or}\quad
b\le\Scl_H E\le a,
\]
for upper or lower local scales, respectively.
This is a contradiction.
Thus $\Scl_{loc}\nu=\Scl_{loc}\nu_\delta$
$\nu_\delta$-a.e. for both $\delta=0,1$,
which proves the fact.
\end{proof}

 \begin{remark} 
 The statements of \cref{cvxlocalEme} and \cref{cvxlocalEme2} are valid for any scale in the sense of \cite{He25}. 
 \end{remark}

\subsection{Partition entropies and emergences}
Partitions are understood modulo sets of $\mu$-measure zero.
Let us first recall the following:
\begin{definition}
The \emph{entropy} of a non-trivial countable partition $\cA$ of $X$ is:
\[
H(\cA):=\sum_{ A\in\cA }
\mu(A)|\log\mu(A)|\; .
\]
We define its \emph{entropy order} by:
\[
\Ord H(\cA):=\sum_{\substack{A\in\cA\\\mu(A)>0}}
\mu(A)\log|\log\mu(A)|\; .
\]
\end{definition}
Given a measurable function $\se:X\to\cM(X)$ and $\epsilon>0$, let:
\[
\Delta_\epsilon(\se)(x):=
\left|\log\mu\{x'\in X:d(\se(x),\se(x'))<\epsilon\}\right|\; .
\]
Let $f$ be a measurable self-map of $X$ which leaves $\mu$ invariant, and put
$\nu:=(\se^f)_*\mu$ and $Y:=\supp\nu\subset\cM(X)$ the smallest closed subset of full $\nu$-measure.
We assume that $Y$ contains at least two points. We will prove below: 
\begin{proposition}\label{prop small ball upper limits}
If $\overline\Ord_B Y<\infty$ then it holds:
\begin{equation}\label{ordre et Delta}
\overline\Ord\cE^{loc}_\mu(f)
=\int\varlimsup_{\epsilon\to0}
\frac{\log\Delta_\epsilon(\se^f)}{|\log\epsilon|}\,d\mu
\ge\varlimsup_{\epsilon\to0}\int
\frac{\log\Delta_\epsilon(\se^f)}{|\log\epsilon|}\,d\mu\; .
\end{equation}
If  $\overline\Dim_B Y<\infty$, then it holds:
\begin{equation}\label{dim et Delta}
\overline\Dim\cE^{loc}_\mu(f)
=\int\varlimsup_{\epsilon\to0}
\frac{\Delta_\epsilon(\se^f)}{|\log\epsilon|}\,d\mu
\ge\varlimsup_{\epsilon\to0}\int
\frac{\Delta_\epsilon(\se^f)}{|\log\epsilon|}\,d\mu\; .
\end{equation}
\end{proposition}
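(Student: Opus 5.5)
The plan is to reduce both formulas to a pointwise comparison between $\Delta_\epsilon(\se^f)(x)$ and the $\nu$-measure of a ball. Since $\nu=\se_*\mu$, the definition gives
\[\mu\{x': d(\se(x),\se(x'))<\epsilon\}=\nu(B(\se(x),\epsilon)),\]
so $\Delta_\epsilon(\se^f)(x)=|\log\nu(B(\se(x),\epsilon))|$. Substituting into the definitions of $\overline\Ord_{loc}$ and $\overline\Dim_{loc}$ yields
\[\varlimsup_{\epsilon\to0}\frac{\log\Delta_\epsilon(\se^f)(x)}{|\log\epsilon|}=\overline\Ord_{loc}\nu(\se(x)) \qand \varlimsup_{\epsilon\to0}\frac{\Delta_\epsilon(\se^f)(x)}{|\log\epsilon|}=\overline\Dim_{loc}\nu(\se(x)).\]
Two points need checking here. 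The open versus closed ball convention is irrelevant for limsups, since $B(y,\epsilon)\subset \bar B(y,\epsilon)\subset B(y,2\epsilon)$ and $|\log 2\epsilon|\sim|\log\epsilon|$. For the order, one also uses $\nu(B(y,\epsilon))<1$ for small $\epsilon$, which holds for $\nu$-a.e. $y$ because $Y$ has at least two points. Integrating against $\mu$ and using the change of variables $\int g\circ\se\,d\mu=\int g\,d\nu$ then gives the two equalities
\[\overline\Ord\cE^{loc}_\mu(f)=\int\overline\Ord_{loc}\nu\,d\nu \qand \overline\Dim\cE^{loc}_\mu(f)=\int\overline\Dim_{loc}\nu\,d\nu\]
from \cref{def.le}. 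Measurability of $\epsilon\mapsto\nu(B(\cdot,\epsilon))$ is routine, since the limsup can be taken along a countable set of $\epsilon$ after the same comparison.

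For the inequalities I will apply the reverse Fatou lemma. This needs an integrable upper bound on $\phi_\epsilon:=\log\Delta_\epsilon/|\log\epsilon|$ (resp. $\Delta_\epsilon/|\log\epsilon|$) that is uniform in small $\epsilon$, and this is where the hypothesis $\overline\Ord_B Y<\infty$ (resp. $\overline\Dim_B Y<\infty$) enters. My first attempt would be a pointwise bound. I do not expect this to work, because the ball $B(y,\epsilon)$ can have arbitrarily small mass at exceptional points. So I will instead bound tails. Cover $Y$ by $N=N^B_Y(\epsilon/2)$ balls of radius $\epsilon/2$. Each point $y$ lies in some ball $B_k$ of the cover, and $B_k\subset B(y,\epsilon)$, so $\nu(B(y,\epsilon))\ge\nu(B_k)$. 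Hence
\[\nu\{y:\nu(B(y,\epsilon))<\eta\}\le N\eta.\]
In the dimension case, $N\le\epsilon^{-D}$ with $D>\overline\Dim_B Y$. Choosing $\eta=\epsilon^{s}$, the set where $\Delta_\epsilon/|\log\epsilon|>s$ has measure at most $\epsilon^{s-D}$. In the order case, $N\le\exp(\epsilon^{-D})$ with $D>\overline\Ord_BY$. Choosing $\eta=\exp(-\epsilon^{-s})$, the set where $\log\Delta_\epsilon/|\log\epsilon|>s$ has measure at most $\exp(\epsilon^{-D}-\epsilon^{-s})$.

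These tail bounds give uniform integrability of $\{\phi_\epsilon\}_{\epsilon<\epsilon_0}$. Summing over $s\in\{D+1,D+2,\dots\}$ shows that $\int\max(\phi_\epsilon-D-1,0)\,d\mu$ is bounded uniformly, and in fact tends to $0$ as the threshold grows, for all $\epsilon<\epsilon_0$. Uniform integrability together with $\phi_\epsilon\ge0$ is enough for Fatou's lemma in reverse form: $\varlimsup\int\phi_\epsilon\le\int\varlimsup\phi_\epsilon$. This holds by truncating at level $M$, applying reverse Fatou to $\min(\phi_\epsilon,M)$, and controlling the remainder uniformly. Note that a single dominating function is not needed, and that the limsup over the continuous parameter $\epsilon$ can be realized along a sequence.

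The main obstacle I expect is this uniform-integrability step, specifically handling the tails at all thresholds $s$ uniformly in $\epsilon$. If the simple tail bound is not sharp enough in the order case (for instance when $\epsilon_0$ depends on $D$), I will restrict to dyadic $\epsilon=2^{-m}$, which changes the ratios only by factors tending to $1$. In that regime the bound $\exp(\epsilon^{-D}-\epsilon^{-s})$ is summable and tiny for $s\ge D+1$.
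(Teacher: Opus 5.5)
Your proposal is correct and follows the paper's route. You identify $\Delta_\epsilon(\se^f)=I_\epsilon\circ\se^f$ with $I_\epsilon(y)=-\log\nu(B(y,\epsilon))$, so the equalities are a change of variables. You then bound the tail $\nu\{I_\epsilon>t\}\le N^B_Y(\cdot)\,e^{-t}$ through a covering at a scale comparable to $\epsilon$, truncate at a level $s>\overline\Ord_B Y$ (resp.\ $s>\overline\Dim_B Y$), and apply Fatou to the truncated functions. The paper uses an $\epsilon/3$-partition and integrates the tail in $u$, while you use $\epsilon/2$-balls and sum over integer thresholds; these are equivalent.

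One remark is inaccurate: $\phi_\epsilon\ge0$ fails in the order case wherever $\nu(B(y,\epsilon))>e^{-1}$. This does no harm, because reverse Fatou applied to $\min(\phi_\epsilon,M)$ only needs the upper bound $M$. The paper in any case supplies a uniform lower bound from $\nu(B(y,\epsilon))\le q<1$, which holds because $\nu$ is not a Dirac mass.
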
 
Recall that by \cref{Bound topo}, $\overline\Ord_B Y<\infty$ and so \cref{ordre et Delta} hold true whenever
$\overline\Dim_B X<\infty$, in particular when $X$ is a compact
manifold.

\begin{otherthm}\label{newcreterium loc em}
Assume that $\overline\Ord_B Y<\infty$.
Let $(\cA_n)_n$ be a sequence of non-trivial finite measurable partitions of $X$ and
$\epsilon_n\to0$ such that for every $n$:
\begin{enumerate}
\item $d(\se^f(x),\se^f(x'))>\epsilon_n$ for all $x\in A$, $x'\in A'$
and distinct $A,A'\in\cA_n$;
\item $d(\se^f(x),\se^f(x'))<\epsilon_n$ for all $x,x'\in A$
and every $A\in\cA_n$.
\end{enumerate}
Then:
\[
\overline\Ord\cE^{loc}_\mu(f)
\ge\varlimsup_{n\to\infty}\frac{\Ord H(\cA_n)}{|\log\epsilon_n|}\; .
\]
If, moreover, $\overline\Dim_B Y<\infty$, then:
\[
\overline\Dim\cE^{loc}_\mu(f)
\ge\varlimsup_{n\to\infty}\frac{H(\cA_n)}{|\log\epsilon_n|}\; .
\]
\end{otherthm}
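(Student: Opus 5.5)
The plan is to derive the theorem from \cref{prop small ball upper limits}, specifically the right-hand inequalities in \cref{ordre et Delta} and \cref{dim et Delta}. Those already bound $\overline\Ord\cE^{loc}_\mu(f)$ below by $\varlimsup_{\epsilon\to0}\int \frac{\log\Delta_\epsilon(\se^f)}{|\log\epsilon|}\,d\mu$, and similarly for $\Dim$ without the $\log$. Restricting the $\varlimsup$ to the sequence $\epsilon=\epsilon_n$ only decreases it. So it suffices to show that, for every $n$,
\[
\int \log\Delta_{\epsilon_n}(\se^f)\,d\mu \ge \Ord H(\cA_n)
\qand
\int \Delta_{\epsilon_n}(\se^f)\,d\mu \ge H(\cA_n),
\]
and then divide by $|\log\epsilon_n|$.

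To prove these, fix $n$ and $x\in A\in\cA_n$. By hypothesis (b), every $x'\in A$ satisfies $d(\se^f(x),\se^f(x'))<\epsilon_n$. By hypothesis (a), every $x'$ in a different atom $A'$ satisfies $d(\se^f(x),\se^f(x'))>\epsilon_n$. Hence, up to a $\mu$-null set (partitions being taken modulo null sets, and $\se^f$ defined $\mu$-a.e.),
\[
\{x'\in X: d(\se^f(x),\se^f(x'))<\epsilon_n\}=A.
\]
Therefore $\Delta_{\epsilon_n}(\se^f)(x)=|\log\mu(A)|$ for $\mu$-a.e. $x\in A$. Integrating atom by atom gives $\int\Delta_{\epsilon_n}(\se^f)\,d\mu=\sum_A\mu(A)|\log\mu(A)|=H(\cA_n)$. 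Likewise $\int\log\Delta_{\epsilon_n}(\se^f)\,d\mu=\sum_{\mu(A)>0}\mu(A)\log|\log\mu(A)|=\Ord H(\cA_n)$. So the bounds above are actually equalities. Atoms of measure zero contribute nothing, and atoms of full measure give $\log 0=-\infty$ only when the partition is trivial modulo null sets, which is excluded by non-triviality.

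The delicate step is the sign of $\log\Delta$. Since $\log|\log\mu(A)|$ can be negative when $\mu(A)$ is close to $1$, the integral $\int\log\Delta_\epsilon\,d\mu$ must be well defined, and the inequality from \cref{prop small ball upper limits} must be usable on a subsequence. Only a finite negative contribution arises, because the partitions are finite and non-trivial, so each atom has $\mu(A)<1$. Division by $|\log\epsilon_n|\to\infty$ then preserves the limsup comparison. I will also check that $\varlimsup_{\epsilon\to0}\int(\cdot)\ge\varlimsup_n\int(\cdot)|_{\epsilon=\epsilon_n}$, which is immediate since $\epsilon_n\to0$. The hypothesis $\overline\Ord_B Y<\infty$, respectively $\overline\Dim_B Y<\infty$, enters only through \cref{prop small ball upper limits}.
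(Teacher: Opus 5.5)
Your proposal is correct and follows the paper's proof essentially verbatim. The paper also combines the right-hand inequalities of \cref{ordre et Delta} and \cref{dim et Delta} with a lemma showing that conditions (a) and (b) force $\{x': d(\se^f(x),\se^f(x'))<\epsilon_n\}=A$ for $x\in A$, hence $\int\Delta_{\epsilon_n}(\se^f)\,d\mu=H(\cA_n)$ and $\int\log\Delta_{\epsilon_n}(\se^f)\,d\mu=\Ord H(\cA_n)$; your added care about null sets, and about non-triviality excluding atoms of full measure, only refines that argument.
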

\begin{proof}
Apply \cref{ordre et Delta}, and \cref{dim et Delta} under the additional
dimensional assumption, together with the following lemma.
\end{proof}
\begin{lemma}
Let $\cA$ be a non-trivial finite measurable partition satisfying conditions (a) and (b)
above at scale $\epsilon$. Then:
\[
\int\Delta_\epsilon(\se^f)\,d\mu=H(\cA)
\qand
\int\log\Delta_\epsilon(\se^f)\,d\mu=\Ord H(\cA)\; .
\]
\end{lemma}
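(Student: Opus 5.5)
The plan is to compute $\Delta_\epsilon(\se^f)(x)$ pointwise, show that it depends only on the atom of $\cA$ containing $x$, and then integrate atom by atom.

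Fix $x$ outside the null set on which $\se^f$ is undefined, and let $A\in\cA$ be the atom containing it. We show that the set $B_x:=\{x'\in X: d(\se^f(x),\se^f(x'))<\epsilon\}$ coincides with $A$ up to a $\mu$-null set.
\begin{itemize}
\item Condition (b) gives $A\subset B_x$: every $x'\in A$ satisfies $d(\se^f(x),\se^f(x'))<\epsilon$.
\item Condition (a) gives $B_x\subset A$: every $x'$ in a different atom $A'$ satisfies $d(\se^f(x),\se^f(x'))>\epsilon$, so $x'\notin B_x$.
\end{itemize}
Since partitions are taken modulo null sets and $\se^f$ is defined $\mu$-a.e., we get $\mu(B_x)=\mu(A)$. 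Hence
\[
\Delta_\epsilon(\se^f)(x)=|\log\mu(A)| \quad \text{for $\mu$-a.e. } x\in A .
\]

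Atoms of zero measure contribute nothing to any integral, so the a.e. statement is enough. The function $\Delta_\epsilon(\se^f)$ is therefore a.e. equal to the simple function $\sum_{A\in\cA}|\log\mu(A)|\,\mathbf 1_A$. Integrating gives
\[
\int\Delta_\epsilon(\se^f)\,d\mu=\sum_{A\in\cA}\mu(A)|\log\mu(A)|=H(\cA).
\]

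For the second identity, compose with $\log$ before integrating:
\[
\int\log\Delta_\epsilon(\se^f)\,d\mu=\sum_{A\in\cA,\ \mu(A)>0}\mu(A)\log|\log\mu(A)|=\Ord H(\cA).
\]

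The one point that needs care is an atom with $\mu(A)=1$. There $|\log\mu(A)|=0$, so $\log\Delta_\epsilon=-\infty$ on $A$. Non-triviality of $\cA$ and finiteness of the partition ensure such an atom is not the only atom carrying mass. Even if the full mass sat on one atom, the convention $0\cdot\log 0$ and $\Ord H(\cA)$ would both read $-\infty$ consistently, so the identity still holds. I would add a brief remark on this convention.

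The main obstacle is therefore only bookkeeping: null sets, and the strictness of the inequalities in (a) and (b), which rules out ambiguity on the boundary $d=\epsilon$.
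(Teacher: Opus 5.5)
Your proposal is correct and follows the paper's proof: conditions (a) and (b) show that $\{x' : d(\se^f(x),\se^f(x'))<\epsilon\}$ is exactly the atom $A$ containing $x$. Hence $\Delta_\epsilon(\se^f)=|\log\mu(A)|$ on $A$, and integrating atom by atom gives both identities.

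Your closing remark on an atom of full mass is harmless but slightly garbled. Since partitions are taken modulo null sets, non-triviality already excludes $\mu(A)=1$, so $\log\Delta_\epsilon(\se^f)$ is finite $\mu$-a.e. and no $-\infty$ convention is needed.
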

\begin{proof}
For $A\in \cA$ and $x\in A$, conditions (a) and (b) give:
\[
\{x'\in X:d(\se^f(x),\se^f(x'))<\epsilon\}=A\; .
\]
Thus $\Delta_\epsilon(\se^f)=|\log\mu(A)|$ on   $A$. Integrating over $X$  proves both equalities.
\end{proof}
The following implies \cref{prop small ball upper limits}:
\begin{lemma}\label{small ball upper limits}
Let $Y$ be a compact metric space and let $\nu$ be a Borel probability
measure on $Y$ which is not a Dirac measure. For $0<\epsilon<1$, set
\[
I_\epsilon(y):=-\log\nu(B(y,\epsilon))\; .
\]
Then:
\begin{enumerate}
\item If $\overline\Dim_B Y<\infty$, we have
\[
\varlimsup_{\epsilon\to0}
\frac{\int I_\epsilon\,d\nu}{|\log\epsilon|}
\le\int\overline\Dim_{loc}\nu\,d\nu\; .
\]
\item If $\overline\Ord_B Y<\infty$, we have
\[
\varlimsup_{\epsilon\to0}
\frac{\int\log I_\epsilon\,d\nu}{|\log\epsilon|}
\le\int\overline\Ord_{loc}\nu\,d\nu\; .
\]
\end{enumerate}
\end{lemma}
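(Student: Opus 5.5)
The natural route is a reverse Fatou lemma applied to the normalized functions $I_\epsilon/|\log\epsilon|$, resp.\ $\log I_\epsilon/|\log\epsilon|$. Pointwise, $\varlimsup_{\epsilon\to0} I_\epsilon(y)/|\log\epsilon|=\overline\Dim_{loc}\nu(y)$ and, wherever $I_\epsilon(y)\to\infty$, $\varlimsup_{\epsilon\to0}\log I_\epsilon(y)/|\log\epsilon|=\overline\Ord_{loc}\nu(y)$. Reverse Fatou then gives $\varlimsup\int\le\int\varlimsup$, provided the family is dominated from above by a single $\nu$-integrable function for all small $\epsilon$. Everything therefore reduces to constructing such a dominating function.

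The domination comes from a covering argument. Fix $\epsilon$ and let $N(\epsilon)=N^B_Y(\epsilon/2)$. Cover $Y$ by $N(\epsilon)$ balls $B_k$ of radius $\epsilon/2$ and form the disjoint partition $P_k:=B_k\setminus\bigcup_{l<k}B_l$. For $y\in P_k$ we have $P_k\subset B(y,\epsilon)$, so $I_\epsilon(y)\le|\log\nu(P_k)|$. Set $J_\epsilon(y):=|\log\nu(P_{k(y)})|$ on the cells of positive measure; the remaining cells are $\nu$-null. Then
\[
\int J_\epsilon\,d\nu\le\log N(\epsilon),
\qquad
\nu\{J_\epsilon>s\}\le N(\epsilon)e^{-s}.
\]
If $\overline\Dim_B Y<D$, then $N(\epsilon)\le C\epsilon^{-D}$, and hence
\[
\nu\{I_\epsilon/|\log\epsilon|>u\}\le C\epsilon^{-D}\epsilon^{u}\le C\epsilon^{u-D}.
\]
Thus for $u>D+1$ the tail is uniformly tiny, which is enough for uniform integrability of the family $I_\epsilon/|\log\epsilon|$. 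Uniform integrability in turn yields the reverse Fatou inequality $\varlimsup\int g_\epsilon\le\int\varlimsup g_\epsilon$: apply Fatou to $M-\min(g_\epsilon,M)$ and control the truncation error uniformly in $\epsilon$. This settles (a), with no pointwise dominating function needed.

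For (b), the same partition with $\overline\Ord_B Y<D$ gives $\log N(\epsilon)\le\epsilon^{-D}$. Hence
\[
\nu\{\log I_\epsilon>u|\log\epsilon|\}=\nu\{I_\epsilon>\epsilon^{-u}\}\le \exp(\epsilon^{-D}-\epsilon^{-u}),
\]
which is again uniformly small for $u>D+1$, so uniform integrability of $\log I_\epsilon/|\log\epsilon|$ holds on the positive side. On the negative side, $\log I_\epsilon$ may be very negative where $I_\epsilon$ is near $0$. The hypothesis that $\nu$ is not a Dirac measure is there to handle this. Only the positive part matters for an upper bound: $\log I_\epsilon\le\log^+I_\epsilon$, and the limsup of $\log^+I_\epsilon/|\log\epsilon|$ still equals $\overline\Ord_{loc}\nu$ at every point where $I_\epsilon\to\infty$. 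Where $I_\epsilon$ stays bounded, which happens exactly at atoms, both sides are $\le0$, and the right-hand side is taken with value $0$ by the convention. So one works with $\log^+$ throughout.

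The main obstacle is making the truncation argument clean. This means uniformity of the tail bound over all small $\epsilon$, not just along a sequence. The estimate $\int_{\{g_\epsilon>M\}}g_\epsilon\,d\nu\to0$ uniformly follows by integrating the tail bound, $\int_M^\infty C\epsilon^{u-D}\,du$, which is bounded by $C/|\log\epsilon|$ times a factor decaying in $M$. Atoms require the separate convention just described.
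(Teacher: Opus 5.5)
Your proposal is correct and follows essentially the paper's proof: the covering-partition tail bound $\nu\{I_\epsilon>t\}\le N_Y^B(\epsilon/2)\,e^{-t}$, truncation at a level above $\overline\Dim_B Y$ (resp. $\overline\Ord_B Y$) with a remainder that vanishes as $\epsilon\to0$, and Fatou's lemma applied to $M-\min\{F_\epsilon,M\}$.

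The only deviation is in (b): you pass to $\log^+ I_\epsilon$, whereas the paper uses the non-Dirac hypothesis to get a uniform lower bound $I_\epsilon\ge-\log q>0$. Both routes work. Your pointwise upper limit is unchanged by this substitution; at an atom both sides are $0$ because $I_\epsilon(y)\to-\log\nu\{y\}\in(0,\infty)$, not because of the $\phi\equiv1$ convention.
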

\begin{proof} To prove the lemma, it suffices to show that:
\[ \limsup_{\epsilon\to 0} \int F_\epsilon\, d\nu \le\int  \limsup_{\epsilon\to 0}  F_\epsilon\, d\nu,\] 
where:
\[
F_\epsilon:=\frac{I_\epsilon}{|\log\epsilon|}\quad \text{in case (a)}\quad\text{or }F_\epsilon:=\frac{\log I_\epsilon}{|\log\epsilon|}\quad \text{in case (b)}\; .
\]
The difficulty in proving the above inequality is that $F_\epsilon$ is unbounded. We are going to substitute it with $\min\{F_\epsilon,s\}$ , which will be shown to be bounded. We shall now prove that the remainder $\max\{0, F_\epsilon-s\}$ has a small integral. 

Write $N_\epsilon:=N_Y^B(\epsilon/3)$.
A cover by $N_\epsilon$ balls of radius $\epsilon/3$ yields a measurable
partition into at most $N_\epsilon$ sets of diameter less than $\epsilon$.
Each element of this partition meeting  $\{I_\epsilon>t\}$ has mass at most $e^{-t}$. Hence
\begin{equation}\label{small ball tail}
\nu\{I_\epsilon>t\}\le N_\epsilon e^{-t},\qquad t\ge0\; .
\end{equation}

For (a), choose $\overline\Dim_B Y<s'<s$. For all sufficiently small $\epsilon$, we have
$N_\epsilon\le\epsilon^{-s'}$. The preceding estimate gives
\[
\begin{aligned}
\int\max\{0,  F_\epsilon-s\}\,d\nu
&=\int_s^\infty\nu\{F_\epsilon>u\}\,du=\int_s^\infty\nu\{I_\epsilon>- u  \log \epsilon \}\,du
\\
&\le\int_s^\infty\epsilon^{u-s'}\,du
=\frac{\epsilon^{s-s'}}{|\log\epsilon|}
\longrightarrow0\; .
\end{aligned}
\]

For (b), choose $\overline\Ord_B Y<s'<s$. For all sufficiently small $\epsilon$, we have
$\log N_\epsilon\le\epsilon^{-s'}$. Therefore,
\[
\begin{aligned}
\int 
\max\{0, 
F_\epsilon-s\}\,d\nu&=\int_s^\infty \nu \{\log I_\epsilon>- u  \log \epsilon \}\,du\\
&\le\int_s^\infty\exp(\epsilon^{-s'}-\epsilon^{-u})\,du
\le\frac{\epsilon^s}{|\log\epsilon|}
\exp(\epsilon^{-s'}-\epsilon^{-s})
\longrightarrow0\; .
\end{aligned}
\]
The second inequality follows by the change of variables $v=\epsilon^{-u}$.

Consequently, when $\epsilon\to 0$:\[
\int F_\epsilon\,d\nu
=\int\min\{F_\epsilon,s\}+ \max\{0, 
F_\epsilon-s\}\,d\nu =\int\min\{F_\epsilon,s\}\,d\nu+o(1)\; .
\]
We prove below:
\begin{fact}
In both cases, the functions $\min\{F_\epsilon,s\}$ are uniformly bounded
for sufficiently small $\epsilon$.  
\end{fact}
Applying Fatou's lemma to $s-\min\{F_\epsilon,s\}$, we have:
\[
\begin{aligned} \limsup_{\epsilon\to 0} 
\int F_\epsilon\,d\nu
&= \limsup_{\epsilon\to 0}  \int\min\{F_\epsilon,s\}\,d\nu \\
&\le  \int  \limsup_{\epsilon\to 0} \min\{F_\epsilon,s\}\,d\nu   
\le  \int  \limsup_{\epsilon\to 0} F_\epsilon \,d\nu   \; .\end{aligned} \]
\end{proof}
\begin{proof}[Proof of the Fact] In case (a), we have $0\le\min\{F_\epsilon,s\}\le s$. For case (b),
since $\nu$ is not a Dirac measure, we can choose two Borel sets of positive
$\nu$-mass at positive distance from each other. Every sufficiently small
ball misses at least one of them. Consequently, there exists $q<1$ such that
\[
\nu(B(y,\epsilon))\le q
\]
for every $y\in Y$ and every sufficiently small $\epsilon$.
Thus $I_\epsilon\ge-\log q>0$, and
$\log I_\epsilon/|\log\epsilon|$ is uniformly bounded below as
$\epsilon\to0$.
\end{proof}

\subsection{Variational Principle}
With Bochi, we proved the following variational principle:
\begin{otherthm}[\cite{BB21}]\label{varia emergence} There exists an invariant probability measure $\mu$ such that:
\[\overline \Ord \Eme_{\mu}(f) = \overline \Ord\Eme_{\mathrm{top}}(f)\; .\]
\end{otherthm}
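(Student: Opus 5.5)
Since $\mu$ is required to be invariant, \cref{comparision metric top} gives $\Eme_\mu(f)\le\Eme_{\mathrm{top}}(f)$, hence $\overline\Ord\Eme_\mu(f)\le\overline\Ord\Eme_{\mathrm{top}}(f)$ for every invariant $\mu$. The content of the theorem is therefore the reverse inequality: we must build a single invariant measure whose metric emergence is as large as the topological one along a sequence of scales. By \cref{quantization2emergence}, $\Eme_\mu(f)(\epsilon)$ is the quantization number of the ergodic decomposition $\se_*\mu\in\cM(\cM(X))$. So the plan is to construct $\mu$ by prescribing its ergodic decomposition as a measure on $\cM^{\mathrm{erg}}_f$ that is ``spread out'' at infinitely many scales.

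Set $\alpha:=\overline\Ord\Eme_{\mathrm{top}}(f)$ and pick $\epsilon_k\to0$ with $\log\log\Eme_{\mathrm{top}}(f)(\epsilon_k)\ge(\alpha-1/k)|\log\epsilon_k|$. (If $\alpha=0$ there is nothing to prove.) At each scale $\epsilon_k$ we choose a maximal $2\epsilon_k$-separated family $\{\nu^k_1,\dots,\nu^k_{M_k}\}$ in $\cM^{\mathrm{erg}}_f$. Its cardinality $M_k$ is at least the covering number at scale $2\epsilon_k$, so $\log\log M_k$ is comparable to $\log\log\Eme_{\mathrm{top}}(f)(2\epsilon_k)$. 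To absorb the factor $2$ harmlessly in the order, we pick the subsequence along which the order is attained at scale $2\epsilon_k$ instead. Let $m_k:=\frac1{M_k}\sum_i\delta_{\nu^k_i}$. Its quantization number at scale $\epsilon_k/2$ is large: any $N$ points $\mu_1,\dots,\mu_N$ have $\epsilon_k/2$-balls each containing at most one $\nu^k_i$. Hence
\[\int \mathsf d(\nu,\{\mu_j\})\,dm_k\ge(1-N/M_k)\,\epsilon_k/2,\]
which forces $N\ge M_k/2$ at scale $\epsilon_k/4$.

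Next we glue the $m_k$. Set $\mathbf m:=\sum_k c_k m_k$ with weights $c_k>0$ summing to $1$, and $\mu:=\int\nu\,d\mathbf m(\nu)$. This $\mu$ is invariant. Since the $\nu^k_i$ are ergodic, its ergodic decomposition is exactly $\mathbf m$, by uniqueness of the ergodic decomposition; one should check that $\mathbf m$ is a probability measure on ergodic measures, which it is. Applying the lemma in the proof of \cref{cvxem1} (with $s=c_k$ and $\mu'=\mu^{(k)}:=\int\nu\,dm_k$, whose decomposition is $m_k$) gives
\[\Eme_\mu(f)(c_k\epsilon_k/4)\ge\Eme_{\mu^{(k)}}(f)(\epsilon_k/4)\ge M_k/2.\]
Thus $\log\log\Eme_\mu(f)(c_k\epsilon_k/4)\ge(\alpha-o(1))|\log\epsilon_k|$.

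The main obstacle is the choice of weights. We need $|\log(c_k\epsilon_k/4)|\sim|\log\epsilon_k|$, i.e.\ $|\log c_k|=o(|\log\epsilon_k|)$, while still having $\sum c_k=1$. Passing to a subsequence with $\epsilon_k\le e^{-k^2}$ and taking $c_k\propto k^{-2}$ gives $|\log c_k|=O(\log k)=o(k^2)\le o(|\log\epsilon_k|)$. Hence $\overline\Ord\Eme_\mu(f)\ge\alpha$, and combined with the first paragraph this gives equality. A minor point to verify along the way is that $\overline\Ord$ of the topological emergence is unchanged when the scale is multiplied by constants, which holds because the denominator is $|\log\epsilon|$.
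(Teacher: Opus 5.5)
Your proof is correct. It has the same skeleton as the paper's proof of the companion \cref{varia emergence2}, which is also the template for the cited result from \cite{BB21}. First take a countably supported probability $\mathbf m$ on the relevant set of measures whose quantization number has the right upper order. Then realize it as the distribution of empirical limits of a measure on $X$: here the barycenter $\mu=\int\nu\,d\mathbf m(\nu)$, for which $\se_*\mu=\mathbf m$. Consequently $\Eme_\mu(f)(\epsilon)$ is the least $N$ such that $\int\mathsf d(\nu,F)\,d\mathbf m<\epsilon$ for some set $F$ with $|F|=N$.

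The difference is how $\mathbf m$ is obtained. The paper's route quotes \cite[Thm 3.9]{BB21} as a black box: a Borel set $Y$ (here $\cM^{\mathrm{erg}}_f$) carries a measure with $\underline{\Ord}\,\cQ=\underline{\Ord}_BY$ and $\overline{\Ord}\,\cQ=\overline{\Ord}_BY$, and this is transferred to emergence through the ergodic decomposition and \cref{quantization2emergence}. You instead build $\mathbf m$ by hand. You take uniform measures on maximal $2\epsilon_k$-separated sets of ergodic measures along a sparse sequence of scales, and glue them with weights satisfying $|\log c_k|=o(|\log\epsilon_k|)$. You then pass from each block to the mixture with the domination lemma ($s\mu'\le\mu$) from the proof of \cref{cvxem1}. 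Your route is self-contained and proves exactly the half of the quantization theorem the statement needs. The cited theorem controls all small scales, not just a subsequence, so the measure it produces would also give $\underline{\Ord}\,\Eme_\mu(f)=\underline{\Ord}\,\Eme_{\mathrm{top}}(f)$.

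Two details to tidy up. First, $\alpha=\overline{\Ord}\,\Eme_{\mathrm{top}}(f)$ may equal $+\infty$ when $\overline{\Dim}_BX=\infty$. In that case choose $\epsilon_k$ with $\log\log\Eme_{\mathrm{top}}(f)(2\epsilon_k)\ge k|\log 2\epsilon_k|$ instead of $(\alpha-1/k)|\log 2\epsilon_k|$; the rest of the argument is unchanged. Second, you do not need uniqueness of the ergodic decomposition. Since $\se_*\nu=\delta_\nu$ for ergodic $\nu$, linearity of the push-forward gives $\se_*\mu=\mathbf m$ directly, even if some $\nu^k_i$ coincide for different $k$.
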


Similarly, we have:
\begin{otherthm}\label{new var principle}\label{varia emergence2} There exists a (not necessarily invariant) probability measure $\mu$ such that:
\[\overline\Ord \Eme'_{\mu}(f) = \overline\Ord \Eme'_{\mathrm{top}}(f)\; .\]
\end{otherthm}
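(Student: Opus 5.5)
Since the proposition preceding the summary boxes gives $\Eme'_\mu(f)\le\Eme'_{\mathrm{top}}(f)$ for \emph{every} probability measure $\mu$, the inequality $\overline\Ord\Eme'_\mu(f)\le\overline\Ord\Eme'_{\mathrm{top}}(f)$ holds for any $\mu$. So the task is to build one $\mu$ achieving the reverse inequality. Because $\mu$ need not be invariant, I plan to take $\mu$ purely atomic, a countable convex combination of Dirac masses at well-chosen points. If $D:=\overline\Ord\Eme'_{\mathrm{top}}(f)=0$ there is nothing to prove, so assume $D>0$.

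\emph{Step 1 (choice of scales and points).} Let $\mathcal A:=\bigcup_{x\in X}\mathrm{Acc}(\se^f_n(x))_n$. Since $D$ is an upper order, there is a sequence $\epsilon_k\to0$ with $\log|\log\Eme'_{\mathrm{top}}(f)(\epsilon_k)|/|\log\epsilon_k|\to D$. Passing to a subsequence, I will also impose $\epsilon_k\le 2^{-k^2}$. Set $N_k:=\Eme'_{\mathrm{top}}(f)(\epsilon_k)$ and take a maximal $\epsilon_k$-separated subset $\{\nu^k_1,\dots,\nu^k_{M_k}\}\subset\mathcal A$. By maximality, the $\epsilon_k$-balls around these points cover $\mathcal A$, so $M_k\ge N_k$. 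For each $\nu^k_l$, choose a point $x^k_l\in X$ such that $\nu^k_l$ is an accumulation point of $(\se^f_n(x^k_l))_n$. Then define
\[\mu_k:=\frac1{M_k}\sum_{l}\delta_{x^k_l}\qand \mu:=\sum_{k\ge1}2^{-k}\mu_k .\]

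\emph{Step 2 (lower bound for each $\mu_k$).} I claim $\Eme'_{\mu_k}(f)(\epsilon_k/4)\ge M_k/2$. Suppose $\rho_1,\dots,\rho_m$ satisfy $\int\varlimsup_n\min_j\mathsf d(\se^f_n,\rho_j)\,d\mu_k<\epsilon_k/4$. By Markov's inequality, more than half of the indices $l$ satisfy $\varlimsup_n\min_j\mathsf d(\se^f_n(x^k_l),\rho_j)<\epsilon_k/2$. Since $\nu^k_l$ is an accumulation point of $(\se^f_n(x^k_l))_n$, each such $\nu^k_l$ lies within distance $<\epsilon_k/2$ of some $\rho_j$. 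By $\epsilon_k$-separation, a given $\rho_j$ serves at most one $\nu^k_l$, hence $m\ge M_k/2$. Next, since $2^{-k}\mu_k\le\mu$, the disjoint-union lemma in the proof of \cref{cvxem1} (applied with $s=2^{-k}$) yields
\[\Eme'_\mu(f)(2^{-k}\epsilon_k/4)\ge \Eme'_{\mu_k}(f)(\epsilon_k/4)\ge N_k/2 .\]

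\emph{Step 3 (the order computation, the main point to check).} Put $\epsilon'_k:=2^{-k-2}\epsilon_k$. The constraint $\epsilon_k\le2^{-k^2}$ gives $|\log\epsilon'_k|=|\log\epsilon_k|(1+o(1))$. Moreover, $\log|\log(N_k/2)|=\log|\log N_k|+o(1)$, because $N_k\to\infty$ when $D>0$. Therefore
\[\overline\Ord\Eme'_\mu(f)\ge\varlimsup_k\frac{\log|\log(N_k/2)|}{|\log\epsilon'_k|}=D .\]
The delicate points are exactly these two bookkeeping steps: the Markov/separation counting, which loses only a factor $2$ in the number of measures and a factor $4$ in the scale, and the absorption of the weight $2^{-k}$ into the scale. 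The latter is why the scales are chosen super-exponentially small in $k$. Finally, monotonicity of $\Eme'_\mu(f)$ in $\epsilon$ is not even needed, since the upper order is a $\limsup$ and a single sequence of scales $\epsilon'_k\to0$ suffices.
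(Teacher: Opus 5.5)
Your proof is correct. It shares the paper's overall strategy: a purely atomic $\mu$ carried by points whose empirical sequences accumulate on well-chosen elements of $\bigcup_x \mathrm{Acc}(\se^f_n(x))_n$, together with the general bound $\Eme'_\mu(f)\le\Eme'_{\mathrm{top}}(f)$. The difference lies in how you get the lower bound.

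The paper treats \cite[Thm 3.9]{BB21} as a black box. It passes to a countable dense (hence Borel) subset $Y$ of the union of accumulation sets and obtains an atomic $\nu=\sum_i t_i\delta_{\nu_i}$ on $Y$ whose quantization orders equal the box orders of $Y$. It then lifts $\nu$ to $\mu=\sum_i t_i\delta_{x_i}$ and proves $\cQ_\epsilon(\nu)\le\Eme'_\mu(f)(\epsilon)$ by transporting each $\delta_{\nu_j}$ to the nearest $\delta_{\mu_{i_j}}$.

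You instead inline the half of that theorem you need. You take uniform measures on maximal $\epsilon_k$-separated sets and do a Markov/separation count on each block. The domination lemma from the proof of \cref{cvxem1} passes from $\mu_k$ to $\mu$, and the super-exponentially sparse scales make the weight $2^{-k}$ invisible at logarithmic scale. Your bookkeeping (the factor $2$ in cardinality, the factor $4$ and then $2^{-k}$ in scale, and $N_k\to\infty$ when $D>0$) all checks out.

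What your route buys is self-containment. It needs no quantization numbers on $\cM(\cM(X))$ and no Borel-measurability detour.

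What the paper's route buys, beyond brevity given the citation, is uniformity in $\epsilon$. Since $\cQ_\epsilon(\nu)\le\Eme'_\mu(f)(\epsilon)\le\Eme'_{\mathrm{top}}(f)(\epsilon)$ at every scale, the same $\mu$ also satisfies $\underline\Ord\,\Eme'_\mu(f)=\underline\Ord\,\Eme'_{\mathrm{top}}(f)$. Your sparse-scale construction controls $\Eme'_\mu(f)$ only along the sequence $\epsilon'_k$, so it does not yield that.
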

\begin{proof} 
We apply the following theorem to a countable dense subset $Y$ of
$\bigcup_{x\in X}\mathrm{Acc}(\mathsf e_n^f(x))_n$.
The set $Y$ is Borel and has the same lower and upper box orders as this union. 
\begin{otherthm}[{\cite[thm 3.9]{BB21}}]
Let $Y$ be a Borel subset of a compact metric space $Z$. Then
there exists a probability measure $\nu \in \cM(Y)$ such that 
\[ \underline \Ord \cQ(\nu) = \underline \Ord_B Y 
\qand 
\overline \Ord \cQ(\nu) = \overline \Ord_B Y 
\; .\]
Moreover, $\nu$ is carried by a countable subset of $Y$. 
\end{otherthm}
Let $\nu:= \sum_i t_i \cdot \delta_{\nu_i}$, with $\nu_i\in Y$. 
For each $i$, let $x_i$ be such that $\nu_i\in \mathrm{Acc\, }(\mathsf e^f_n(x_i))_n$. Let $\mu:= \sum_i t_i \cdot \delta_{x_i}$.

For every $\epsilon>0$ small, let $ N(\epsilon):= \Eme'_{\mu}(f)(\epsilon)$. Hence  there are measures $(\mu_i)_{1\le i\le N}$ such that
\[
\epsilon> \int \varlimsup_{n\to \infty} \min_{1\le i\le N(\epsilon)} \mathsf{d}(\se^f_n(x), \mu_i) \, d\mu(x) 
= \sum_j t_j \cdot \varlimsup_{n\to \infty} \min_{1\le i\le N(\epsilon)} \mathsf{d}(\se^f_n(x_j), \mu_i)
 \; .
\]
Thus 
\[
\epsilon>\sum_j t_j \cdot \min_{1\le i\le N} \mathsf{d}(\nu_j , \mu_i) \; .
\]
For every $j$, let $\mu_{i_j}$ be minimizing $ \min_{1\le i\le N} \mathsf{d}(\nu_j , \mu_i)$. Hence we have:
\[
\epsilon>\sum_j t_j \cdot \mathsf{d}(\nu_j , \mu_{i_j}) \; .
\]
Recall that $ \nu:= \sum_j t_j \delta_{\nu_j}$ and put $ \hat \nu:= \sum_j t_j \delta_{\mu_{i_j}}$. Observe that the distance between $ \nu$ and $ \hat \nu$ is $<\epsilon$. Hence the quantization number of $ \nu$ satisfies $\cQ_\epsilon (\nu)\le  N(\epsilon)= \Eme'_{\mu}(f)(\epsilon)$. Thus:
\[\overline \Ord \Eme'_{\mu}(f) \ge \overline \Ord \cQ(\nu) = \overline \Ord_B Y= \overline\Ord \Eme'_{\mathrm{top}}(f) .\] 
We conclude by recalling that $ \overline\Ord \Eme'_{\mathrm{top}}(f)$ is at least $\overline \Ord \Eme'_{\mu}(f)$. 
\end{proof}

\end{document}